\documentclass{article}
\usepackage[preprint,nonatbib]{neurips_2026}

\usepackage[utf8]{inputenc}
\usepackage[T1]{fontenc}
\usepackage{amsmath,amsthm,amssymb,enumerate,subfigure,multirow,hhline,color}
\usepackage{xcolor}
\usepackage{hyperref}
\usepackage{url}
\usepackage{microtype}
\usepackage{graphicx}
\usepackage{wrapfig}
\usepackage{graphics}
\usepackage{tikz}
\usetikzlibrary{arrows.meta,calc,decorations.markings}
\usepackage[sort]{cite}
\usepackage{algorithm}
\usepackage{algpseudocode}
\usepackage{epsfig,psfrag}
\usepackage{tabularx}
\usepackage{tabulary}
\usepackage[noabbrev,capitalise,nameinlink]{cleveref}

\hypersetup{
breaklinks=true,
colorlinks=true,
linkcolor=blue,
citecolor=blue,
urlcolor=blue
}

\DeclareMathOperator{\argmax}{arg\,max}
\DeclareMathOperator{\Argmax}{Arg\,max}
\DeclareMathOperator{\dist}{dist}

\newcommand{\proj}{\mathbf{\Pi}_\perp}

\newtheorem{theorem}{Theorem}

\newtheorem{assumption}{Assumption}

\newtheorem{corollary}{Corollary}

\newtheorem{lemma}{Lemma}
\newtheorem{definition}{Definition}
\newtheorem{proposition}{Proposition}

\allowdisplaybreaks

\title{Beyond the Bellman Fixed Point: Geometry and Fast Policy Identification in Value Iteration}

\author{Donghwan Lee\\
Department of Electrical Engineering, Korea Advanced Institute of Science and Technology (KAIST),\\
Daejeon 34141, South Korea\\
{\tt donghwan@kaist.ac.kr}
}

\begin{document}

\maketitle

\begin{abstract}
Q-value iteration (Q-VI) is usually analyzed through the \(\gamma\)-contraction of the Bellman operator. This argument proves convergence to \(Q^*\), but it gives only a coarse account of when the induced greedy policy becomes optimal. We study discounted Q-VI as a switching system and focus on the practically optimal solution set (POSS), the set of \(Q\)-functions whose tie-broken greedy policies are optimal. The main result shows that Q-VI reaches the optimal action class in finite time by entering an invariant tube around \(\mathcal X_1=Q^*+\operatorname{span}(\mathbf 1)\), which is contained in the POSS. For every \(\varepsilon>0\), the distance to \(\mathcal X_1\) satisfies an exponential bound with rate \((\bar\rho+\varepsilon)^k\), where \(\bar\rho\) is the joint spectral radius of the projected switching family restricted to directions transverse to \(\mathcal X_1\). When \(\bar\rho<\gamma\), this transverse convergence is faster than the classical contraction rate. The analysis separates fast policy identification from the subsequent convergence to \(Q^*\), which may still be governed by the all-ones mode. We also give spectral and graph-theoretic conditions under which the strict inequality \(\bar\rho<\gamma\) holds or fails.
\end{abstract}

\section{Introduction}

Dynamic programming~\cite{bellman1966dynamic,bertsekas1996neuro,bertsekas2015dynamic} is a standard method for solving Markov decision processes~\cite{puterman2014markov}. Among its basic algorithms, Q-value iteration (Q-VI) is notable for its simple update rule and its classical contraction guarantee~\cite{bertsekas1996neuro,bertsekas2015dynamic}. In a discounted problem with discount factor $\gamma \in (0,1)$, the Bellman optimality operator is a \(\gamma\)-contraction in the infinity norm. Hence the Q-VI iterates converge exponentially to the optimal Q-function \(Q^*\) at rate $\gamma$, a result that underlies much of dynamic programming and reinforcement learning~\cite{sutton1998reinforcement}.

\begin{wrapfigure}{r}{0.44\textwidth}
\vspace{-10pt}
\centering
\begin{tikzpicture}[
    scale=0.70,
    transform shape,
    line cap=round,
    line join=round,
    axis/.style={-{Stealth[length=2.4mm]}, line width=0.65pt, black!65},
    xoneline/.style={line width=1.15pt, blue!70!black, dash pattern=on 4pt off 2pt},
    tubeedge/.style={line width=0.65pt, blue!35},
    traj/.style={
        line width=1.35pt,
        red!80!black,
        dash pattern=on 1.0pt off 2.1pt,
        postaction={decorate},
        decoration={markings,
            mark=at position 0.24 with {
                \arrow[solid, red!80!black]{Stealth[length=3.8mm,width=3.0mm]}
            },
            mark=at position 0.63 with {
                \arrow[solid, red!80!black]{Stealth[length=3.8mm,width=3.0mm]}
            }
        }
    }
]

\node[font=\large\bfseries] at (2.65,5.45) {$Q$ parameter space};

\filldraw[fill=black!4, draw=black!35, line width=0.7pt]
plot[smooth cycle, tension=0.55] coordinates {
    (-1.55,-1.75) (-1.55,-0.35) (-0.70,0.75) (0.25,1.65)
    (1.20,2.55) (2.25,3.55) (3.35,4.35) (4.35,4.95)
    (4.75,4.70) (4.85,4.10) (4.55,3.45) (3.70,2.55)
    (2.70,1.55) (1.65,0.55) (0.65,-0.40) (-0.25,-1.15)
    (-0.95,-1.65)
};

\fill[blue!16]
    (-1.10,-1.40) -- (4.25,3.95) -- (4.25,4.55) -- (-1.10,-0.80) -- cycle;
\draw[tubeedge] (-1.10,-1.40) -- (4.25,3.95);
\draw[tubeedge] (-1.10,-0.80) -- (4.25,4.55);

\draw[xoneline] (-1.10,-1.10) -- (4.25,4.25);

\draw[axis] (-1.65,0) -- (6.95,0);
\draw[axis] (0,-1.30) -- (0,4.45);

\filldraw[fill=blue!70!black, draw=blue!90!black, line width=0.55pt]
    (0,0) circle (2.7pt);

\node[anchor=south east] at (-0.10,0.15) {$Q^{\ast}$};

\draw[traj]
    (6.05,1.18)
    .. controls (5.60,1.28) and (5.15,1.56) .. (4.75,1.70)
    .. controls (4.15,1.94) and (3.65,2.18) .. (3.05,2.45)
    .. controls (2.75,2.58) and (2.55,2.40) .. (2.30,2.05)
    .. controls (1.92,1.55) and (1.35,1.20) .. (0.78,0.74)
    .. controls (0.42,0.40) and (0.16,0.15) .. (0.02,0.02);

\filldraw[black] (6.05,1.18) circle (1.55pt);
\node[right=2pt] at (6.05,1.18) {$Q_0$};

\filldraw[black!70] (2.30,2.05) circle (1.45pt);
\node[above left=1pt] at (2.30,2.05) {$Q_{K_{\rm id}}$};

\draw[<->, black!55, line width=0.55pt]
    (3.20,3.50) -- (3.50,3.20)
    node[midway, right=3pt] {$\mathcal{T}_{\delta}$};

\node[rotate=45, anchor=south, blue!70!black] at (3.05,3.20)
    {$\mathcal{X}_1=Q^{\ast}+\mathrm{span}(\mathbf 1)$};

\node[black!65, anchor=west] at (5.02,4.55) {$\mathcal{X}^{\ast}$};

\node[red!80!black] at (4.75,1.35) {$Q_k$};

\end{tikzpicture}
\caption{Geometric mechanism for finite-time POSS identification.}
\label{fig:overall-geometric-picture}
\vspace{-12pt}
\end{wrapfigure}
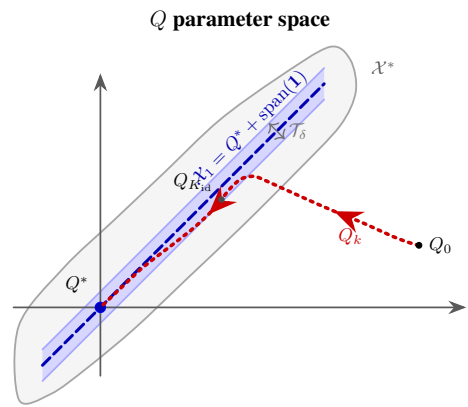

The geometric mechanism considered in this paper is shown schematically in~\cref{fig:overall-geometric-picture}. The usual contraction proof controls the distance to the fixed point \(Q^*\), but it does not resolve the geometry of the trajectory in directions that do not affect action preferences. This distinction matters when the objective is to determine when the greedy policy has already become optimal, even though the Q-function itself has not yet converged. To make this question precise, we define the practically optimal solution set (POSS), denoted by \(\mathcal X^*\), as the set of \(Q\)-functions whose tie-broken greedy policies are optimal.

The starting point of the paper is a switching-system representation of Q-VI. The Bellman update can be written as an affine switching system~\cite{liberzon2003switching,lin2009stability,hu2017resilient}, with the active mode selected by the tie-broken greedy policy of the current iterate. This representation connects Q-VI with tools from switching-system theory~\cite{liberzon2003switching,lin2009stability,hu2017resilient} and Lyapunov analysis~\cite{khalil2002nonlinear}. It also singles out the affine space
\[
\mathcal X_1 := Q^* + \operatorname{span}({\bf 1}).
\]
A shift of \(Q^*\) in the all-ones direction preserves every statewise action ordering, so \(\mathcal X_1\subset\mathcal X^*\). We prove that a sufficiently small tube around \(\mathcal X_1\) is not only contained in \(\mathcal X^*\), but also invariant under the Bellman operator. Once the iterates enter this tube, the tie-broken greedy policy is optimal, and the entrance time is finite.

The rate of this entrance is governed by the dynamics transverse to \(\mathcal X_1\). The classical contraction argument treats the all-ones direction and the transverse directions together, giving the single rate \(\gamma\). By deriving an exact stochastic-policy switching representation for the Q-VI error and projecting out the all-ones direction, we obtain a sharper bound: the distance from \(Q_k\) to \(\mathcal X_1\) decays exponentially at any rate larger than the joint spectral radius (JSR)~\cite{tsitsiklis1997lyapunov,rota1960note,blondel2005computationally} of the restricted projected switching family. If this restricted JSR is smaller than \(\gamma\), then Q-VI approaches \(\mathcal X_1\), and therefore identifies the POSS, faster than the standard fixed-point contraction rate would suggest. The resulting trajectory has two stages: rapid transverse approach and policy identification, followed by convergence to \(Q^*\), possibly dominated by the remaining all-ones component.

We also study when the restricted JSR is strictly below the classical rate. The conditions are expressed in terms of uniform scrambling products, Dobrushin coefficients, and span-seminorm contractions~\cite{dobrushin1956central,hajnal1958weak,wolfowitz1963products,seneta2006non}. We also record the main Markov-chain obstructions: if a deterministic policy induces a nontrivial unit-modulus mode, for example through multiple closed communicating classes or through periodicity of a closed class, then the full restricted family cannot have JSR smaller than \(\gamma\). These conditions identify the mixing structure needed for the transverse dynamics to improve on the classical contraction bound.

\section{Related work}

Switching-system methods have recently been used to analyze reinforcement learning~\cite{sutton1998reinforcement} and dynamic programming~\cite{bellman1966dynamic,bertsekas1996neuro,bertsekas2015dynamic}. The framework of~\cite{lee2020unified} gives a unified switching-system view of Q-learning algorithms, and~\cite{lee2023discrete} develops a discrete-time formulation with convergence guarantees. Subsequent work established sharper final-iteration bounds~\cite{lee2024final} and finite-time results under Markovian observations and diminishing step sizes~\cite{lim2024finite}.

For Q-VI, several papers have examined geometric or Lyapunov-based structure. A switching-system interpretation and orthant-wise geometric behavior are studied in~\cite{lee2023some}. Convex-analytic and Lyapunov methods for value-based reinforcement learning appear in~\cite{guo2022convex}, while~\cite{iervolino2023lyapunov} formulates value iteration as an affine switching system. Switching-based policy-selection mechanisms are considered in~\cite{tipaldi2025switching}. Goyal and Grand-Cl\'ement~\cite{goyal2023first} develop a first-order approach to accelerated value iteration and analyze value-iteration-type recursions through compositions of affine maps, or equivalently through a linear time-varying/switching dynamical-system viewpoint. These works are close in spirit to the present paper, as they use dynamical-systems tools to study value-based dynamic programming and reinforcement learning algorithms.

Geometric approaches outside switching systems provide a complementary perspective. The value-function polytope framework of~\cite{dadashi2019value} characterizes geometric and topological properties of value functions in finite MDPs and uses that structure to interpret reinforcement learning algorithms. Related polyhedral ideas are used in~\cite{wu2022geometric}, which develops geometric policy iteration through hyperplane arrangements and boundary structures of MDPs. Those works focus mainly on value-function geometry, policy improvement, and polyhedral structure. Our focus is instead on the Q-VI trajectory relative to \(\mathcal X_1=Q^*+\operatorname{span}(\mathbf 1)\) and the POSS \(\mathcal X^*\).

Finite-time policy identification is also related to turnpike results for discounted finite MDPs. Feinberg and He~\cite{feinberg2025properties} study turnpike functions and turnpike integers, showing that, after a finite number of value-iteration steps, the generated decision rules are deterministic optimal policies for the infinite-horizon problem. This is close in spirit to finite identification, but their analysis is formulated through turnpike functions for VI, whereas our results study Q-VI trajectory geometry, invariant tubes around \(\mathcal X_1\), and projected switching dynamics.

Other analyses of Q-VI rely on probabilistic or nonsmooth viewpoints. The works~\cite{mustafin2024value,mustafin2025analysis,mustafin2025geometric} use tools such as absolute probability sequences to refine the convergence analysis of classical MDP solution methods. Semismooth Newton-type interpretations of dynamic programming are studied in~\cite{gargiani2022dynamic}. These nonsmooth-equation perspectives are relevant because Bellman optimality operators contain maximization and policy selection. Here we use the same nonsmooth structure to obtain switching dynamics and finite-time identification of the optimal action class.

The technical tools are also connected to stability theory for switching systems and to JSR analysis. The JSR was introduced in~\cite{rota1960note}, and Lyapunov methods for bounding or approximating it have been extensively developed; see, for example,~\cite{tsitsiklis1997lyapunov}. Our Lyapunov construction follows this general line but is applied after projecting away the all-ones direction. The projection is also related to seminorm-based analyses in reinforcement learning and dynamic programming. In particular, the non-asymptotic seminorm Lyapunov framework of~\cite{chen2025non} studies deterministic and stochastic iterative algorithms through seminorm contractions.

Most existing results emphasize asymptotic convergence, global contraction, or the geometry of value functions and policies. The present paper instead analyzes the geometry of the Q-VI trajectory itself, with an emphasis on finite-time policy identification, invariant tubes, and projected switching dynamics.

\section{Preliminaries}\label{sec:preliminaries}

\subsection{Notations}
We use the following notation. The symbols \({\mathbb R}\), \({\mathbb R}^n\), and \({\mathbb R}^{n\times m}\) denote the set of real numbers, the \(n\)-dimensional Euclidean space, and the set of \(n\times m\) real matrices, respectively. For a matrix \(A\), \(A^\top\) denotes its transpose. The relations \(A\succ0\), \(A\prec0\), \(A\succeq0\), and \(A\preceq0\) mean that \(A\) is symmetric positive definite, negative definite, positive semidefinite, and negative semidefinite, respectively. The identity matrix with appropriate dimensions is denoted by \(I\). For a finite set \(\mathcal S\), its cardinality is denoted by \(|\mathcal S|\). The Kronecker product of \(A\) and \(B\) is denoted by \(A\otimes B\). The vector with all entries equal to one is denoted by \(\mathbf 1\), and \(\lambda_{\max}(\cdot)\) and \(\lambda_{\min}(\cdot)\) denote the maximum and minimum eigenvalues, respectively.

For a convex set \(\mathcal Y \subset \mathbb R^n\) and a vector \(x\in\mathbb R^n\), \(\dist_2(x,\mathcal Y)\) and \(\dist_\infty(x,\mathcal Y)\) denote the Euclidean distance and the infinity-norm distance from \(x\) to \(\mathcal Y\), respectively:
\[
\dist_2(x,\mathcal Y):=\inf_{y\in\mathcal Y}\|x-y\|_2,
\qquad
\dist_\infty(x,\mathcal Y):=\inf_{y\in\mathcal Y}\|x-y\|_\infty.
\]
We write \(\Delta_{|{\cal A}|}\) for the probability simplex over a finite action set \({\cal A}\):
\[
\Delta_{|{\cal A}|}
:=
\left\{
p\in\mathbb R^{|{\cal A}|}:\ p_i\ge 0,\ \sum_{i=1}^{{|{\cal A}|}}p_i=1
\right\}.
\]
Throughout the paper, \(\Argmax\) denotes the set-valued maximizer, while \(\argmax\) denotes a fixed tie-broken single-valued maximizer.

\subsection{Markov decision process}
We consider an infinite-horizon discounted Markov decision process (MDP)~\cite{puterman2014markov}, in which an agent sequentially chooses actions to maximize cumulative discounted rewards. The state and action spaces are finite and are denoted by \({\cal S}:=\{1,2,\ldots,|{\cal S}|\}\) and \({\cal A}:=\{1,2,\ldots,|{\cal A}|\}\), respectively. At state \(s\in{\cal S}\), the decision maker selects an action \(a\in{\cal A}\). The next state \(s'\) is drawn according to \(P(s'|s,a)\), and the transition incurs reward \(r(s,a,s')\), where \(r:{\cal S}\times{\cal A}\times{\cal S}\to{\mathbb R}\). We write \(r(s_k,a_k,s_{k+1})=:r_{k+1}\) for \(k\ge0\). A deterministic policy \(\pi:{\cal S}\to{\cal A}\) maps each state \(s\) to an action \(\pi(s)\). Throughout the paper, the discount factor satisfies \(\gamma\in(0,1)\).

For a policy \(\pi\), the Q-function under \(\pi\) is defined as
\[
Q^{\pi}(s,a)={\mathbb E}\left[ \left. \sum_{k=0}^\infty {\gamma^k r_{k+1}} \right|s_0=s,a_0=a,\pi \right],
\]
for all \(s\in{\cal S}\) and \(a\in{\cal A}\). The optimal Q-function is
\[
Q^*(s,a):=\sup_{\pi\in\Theta} Q^\pi(s,a),\qquad s\in{\cal S},\ a\in{\cal A},
\]
where \(\Theta\) denotes the set of all admissible deterministic policies. A deterministic policy \(\pi^*\) is optimal if \(Q^{\pi^*}(s,a)=Q^*(s,a)\) for all \((s,a)\in{\cal S}\times{\cal A}\). Once \(Q^*\) is known, an optimal tie-broken greedy policy can be recovered as \(\pi^*(s)=\argmax_{a\in {\cal A}}Q^*(s,a)\).

For each state \(s\in{\cal S}\), define the set of optimal greedy actions by
\[
\Phi^*(s):=\Argmax_{a\in {\cal A}}Q^*(s,a).
\]
The set of all optimal deterministic policies is then
\[
\Theta^*
:=
\{\pi\in\Theta:\ \pi(s)\in\Phi^*(s),\ \forall s\in {\cal S}\}.
\]
For completeness, we recall why this set coincides with the set of optimal deterministic policies in a finite discounted MDP. If \(\pi\in\Theta^*\), then
\[
V^*(s)
=
Q^*(s,\pi(s))
=
R(s,\pi(s))+
\gamma\sum_{s'\in {\cal S}}P(s'|s,\pi(s))V^*(s'),
\qquad \forall s\in {\cal S}.
\]
Therefore, \(V^*\) is a fixed point of the discounted Bellman evaluation operator for \(\pi\). By uniqueness of this fixed point, \(V^\pi=V^*\). Consequently,
\[
Q^\pi(s,a)
=
R(s,a)+\gamma\sum_{s'\in {\cal S}}P(s'|s,a)V^\pi(s')
=
R(s,a)+\gamma\sum_{s'\in {\cal S}}P(s'|s,a)V^*(s')
=
Q^*(s,a),
\]
and therefore, \(\pi\) is optimal. Conversely, if a deterministic policy \(\pi\) is optimal, then \(V^\pi=V^*\). Hence
\begin{align*}
V^*(s)
=&
V^\pi(s)
=
R(s,\pi(s))+\gamma\sum_{s'\in{\cal S}}P(s'|s,\pi(s))V^\pi(s')\\
=&
R(s,\pi(s))+\gamma\sum_{s'\in{\cal S}}P(s'|s,\pi(s))V^*(s')
=
Q^*(s,\pi(s)),
\end{align*}
and hence, \(\pi(s)\in\Phi^*(s)\) for all states. Therefore, the display above indeed describes all optimal deterministic policies.
The corresponding optimal value function is
\[
V^*(s):=\max_{a\in {\cal A}}Q^*(s,a).
\]

\subsection{Q-value iteration}
We study Q-value iteration (Q-VI)~\cite{bertsekas1996neuro}. Let \(F\) denote the Bellman operator
\[
(FQ)(s,a):= R(s,a) + \gamma \sum\limits_{s' \in {\cal S}} {P(s'|s,a)\max _{a' \in {\cal A}}Q(s',a')},\quad (s,a)\in {\cal S}\times {\cal A}.
\]
The Q-VI recursion is
\begin{equation}\label{eq:qvi-basic}
Q_{k+1}=FQ_k:=F(Q_k),\qquad k=0,1,\ldots .
\end{equation}
The corresponding pseudocode is given in Appendix~\ref{app:qvi-algorithm} as Algorithm~\ref{alg:qvi}. It is well known that Q-VI converges exponentially to \(Q^*\) in the infinity norm \(\|\cdot\|_\infty\)~\cite[Lemma~2.5]{bertsekas1996neuro}.
\begin{lemma}[\cite{bertsekas1996neuro}]\label{lem:qvi-contraction}
The Q-VI iterates satisfy
\[
\left\| {Q_{k+1}  - Q^* } \right\|_\infty   \le \gamma \left\| {Q_k  - Q^* } \right\|_\infty.
\]
\end{lemma}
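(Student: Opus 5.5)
The plan is the standard contraction argument, built on the fixed-point identity and the nonexpansiveness of the maximum. Since \(Q^*\) satisfies the Bellman optimality equation, we have \(Q^*=FQ^*\). This holds because \(V^*(s)=\max_a Q^*(s,a)\) and \(Q^*(s,a)=R(s,a)+\gamma\sum_{s'}P(s'|s,a)V^*(s')\), the latter being exactly the identity recalled above for optimal policies. Hence, for every \((s,a)\),
\[
Q_{k+1}(s,a)-Q^*(s,a)=(FQ_k)(s,a)-(FQ^*)(s,a)=\gamma\sum_{s'\in{\cal S}}P(s'|s,a)\Bigl(\max_{a'}Q_k(s',a')-\max_{a'}Q^*(s',a')\Bigr).
\]
The reward terms \(R(s,a)\) cancel because they do not depend on \(Q\).

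The key step is the elementary inequality
\[
\bigl|\max_{a'}x(a')-\max_{a'}y(a')\bigr|\le\max_{a'}|x(a')-y(a')|
\]
for vectors \(x,y\in\mathbb R^{|{\cal A}|}\). To prove it, let \(\bar a\) be a maximizer of \(x\). Then
\[
\max x-\max y\le x(\bar a)-y(\bar a)\le\max_{a'}|x(a')-y(a')|,
\]
and the reverse inequality follows by symmetry. Applying this with \(x=Q_k(s',\cdot)\) and \(y=Q^*(s',\cdot)\) bounds each bracket by \(\|Q_k-Q^*\|_\infty\).

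Finally, since \(P(\cdot|s,a)\) is a probability distribution, the weighted sum of terms bounded by \(\|Q_k-Q^*\|_\infty\) is itself bounded by \(\|Q_k-Q^*\|_\infty\). This gives \(|Q_{k+1}(s,a)-Q^*(s,a)|\le\gamma\|Q_k-Q^*\|_\infty\) for every \((s,a)\). Taking the maximum over \((s,a)\) yields the claim.

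The only step needing care is the fixed-point identity \(FQ^*=Q^*\), that is, that the supremum over deterministic policies defining \(Q^*\) satisfies the Bellman optimality equation. For a finite discounted MDP this is classical (\cite{bertsekas1996neuro,puterman2014markov}), and I would cite it rather than reprove it. Everything else is the routine nonexpansiveness computation above.
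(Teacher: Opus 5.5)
Your proposal is correct and is essentially the argument the paper relies on: the paper does not reprove the lemma but cites the standard $\gamma$-contraction of the Bellman operator from \cite{bertsekas1996neuro}, which is exactly your combination of the fixed-point identity $FQ^*=Q^*$ (taken as classical, as you do), nonexpansiveness of the maximum, and stochasticity of $P(\cdot|s,a)$. The same bound also follows from the paper's later representation $e_{k+1}=\gamma P\Pi^{\mu_k}e_k$ in \cref{lem:exact-stochastic-representation}, whose proof uses the sandwich $\min_a e_k(s,a)\le\delta_k(s)\le\max_a e_k(s,a)$, a slightly stronger form of your key inequality.
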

The proof in~\cite[Lemma~2.5]{bertsekas1996neuro} is based on the contraction property of the Bellman operator. A direct consequence of~\cref{lem:qvi-contraction} is
\begin{equation}\label{eq:qvi-convergence}
\left\| {Q_k  - Q^* } \right\|_\infty   \le \gamma^k \left\| {Q_0  - Q^* } \right\|_\infty.
\end{equation}
The standard estimate in~\cref{eq:qvi-convergence} controls convergence to the fixed point \(Q^*\). The rest of the paper studies the geometry of the same recursion relative to the affine set \(\mathcal X_1\) and the POSS \(\mathcal X^*\).

\section{Switching system model of Q-VI}\label{sec:switching-model}
This section fixes the switching-system terminology and the compact notation used later. The full affine switching derivation is given in Appendix~\ref{app:qvi-switching-model}; the main text records only the elements needed for the geometric analysis.

\subsection{Switching system}
Consider the linear switching system~\cite{liberzon2003switching,lin2009stability}
\[
x_{k+1}=A_{\sigma_k} x_k,\quad x_0=z\in {\mathbb
R}^n,\quad k\in \{0,1,\ldots \},
\]
where \(x_k\in\mathbb R^n\) is the state, \(\sigma_k\in\mathcal M:=\{1,2,\ldots,M\}\) is the switching signal, and \({\cal H}:=\{A_1,A_2,\ldots,A_M\}\) is the family of subsystem matrices. Linear switching systems and their control theory are well studied~\cite{liberzon2003switching,lin2009stability}. The joint spectral radius (JSR)~\cite{tsitsiklis1997lyapunov,rota1960note,blondel2005computationally} measures the largest exponential growth rate attainable under arbitrary switching among the subsystem matrices.
\begin{definition}[Joint spectral radius~\cite{rota1960note}]\label{def:joint-spectral-radius}
Given a set of matrices \(\{ A_i \in{\mathbb R}^{n\times n} \}_{i=1}^M\), the JSR is defined as
\[
    \rho(A_1,\cdots, A_M)= \lim_{k\to\infty}\max_{\bar \sigma_k \in {\cal M}^k}\left\| A_{\sigma_k}\cdots A_{\sigma_2} A_{\sigma_1} \right\|^{1/k},
\]
where \(\|\cdot \|\) is any submultiplicative matrix norm, and \(\bar \sigma_k : = ({\sigma _1},{\sigma _2}, \ldots ,{\sigma _k}) \in {\cal M}^k\).
\end{definition}
The JSR is independent of the chosen submultiplicative norm. A related class of systems is the affine switching system
\[
x_{k+1}=A_{\sigma_k} x_k + b_{\sigma_k},\quad x_0=z\in {\mathbb
R}^n,\quad k\in \{0,1,\ldots \},
\]
where the input vector \(b_{\sigma_k}\in{\mathbb R}^n\) also switches according to \(\sigma_k\). The affine term introduces a dependence on the active mode that is absent from purely linear switching systems.

\subsection{Definitions}\label{sec:definitions}
Throughout the paper, we use the following compact notation:
\[
P:= \begin{bmatrix}
   P_1\\
   \vdots\\
   P_{|{\cal A}|}
\end{bmatrix}\in{\mathbb R}^{|{\cal S}||{\cal A}| \times |{\cal S}|},\quad
R:= \begin{bmatrix}
   R(\cdot,1) \\
   \vdots \\
   R(\cdot,|{\cal A}|) \\
\end{bmatrix}\in{\mathbb R}^{|{\cal S}||{\cal A}|},\quad
Q:= \begin{bmatrix}
   Q(\cdot,1)\\
  \vdots\\
   Q(\cdot,|{\cal A}|)
\end{bmatrix}\in {\mathbb R}^{|{\cal S}||{\cal A}|},
\]
where \(P_a=P(\cdot|\cdot,a)\in{\mathbb R}^{|{\cal S}|\times|{\cal S}|}\), \(Q(\cdot,a)\in{\mathbb R}^{|{\cal S}|}\) for \(a\in{\cal A}\), and \(R\in{\mathbb R}^{|{\cal S}||{\cal A}|}\) enumerates \(R(s,a):={\mathbb E}[r_{k+1}|s_k=s,a_k=a]\) in an order compatible with the vectorization of \(Q\). Thus, a Q-function is encoded as a single vector \(Q\in{\mathbb R}^{|{\cal S}||{\cal A}|}\) whose entries list \(Q(s,a)\) for all \(s\in{\cal S}\) and \(a\in{\cal A}\). In particular,
\[
Q(s,a) = (e_a  \otimes e_s )^\top Q,
\]
where \(e_s\in{\mathbb R}^{|{\cal S}|}\) and \(e_a\in{\mathbb R}^{|{\cal A}|}\) are the \(s\)-th and \(a\)-th basis vectors, respectively. For any stochastic policy \(\pi:{\cal S}\to \Delta_{|{\cal A}|}\), define the corresponding action transition matrix by
\begin{equation}\label{eq:policy-transition-matrix}
\Pi^\pi:=\begin{bmatrix}
   \pi(1)^\top \otimes e_1^\top\\
   \pi(2)^\top \otimes e_2^\top\\
    \vdots\\
   \pi(|{\cal S}|)^\top \otimes e_{|{\cal S}|}^\top \\
\end{bmatrix}\in {\mathbb R}^{|{\cal S}| \times |{\cal S}||{\cal A}|},
\end{equation}
where \(e_s \in \mathbb{R}^{|{\cal S}|}\). Then \(P\Pi^\pi \in \mathbb{R}^{|{\cal S}||{\cal A}| \times |{\cal S}||{\cal A}|}\) is the transition probability matrix of the state-action pair under policy \(\pi\). For a deterministic policy \(\pi:{\cal S}\to{\cal A}\), we use the corresponding one-hot vector \(\vec{\pi}(s):=e_{\pi(s)}\in\Delta_{|{\cal A}|}\), and the action transition matrix is given by~\cref{eq:policy-transition-matrix} with \(\pi\) replaced by \(\vec{\pi}\). For any \(Q\in\mathbb R^{|{\cal S}||{\cal A}|}\), let \(\pi_Q(s):=\argmax_{a\in{\cal A}}Q(s,a)\) denote the tie-broken greedy policy with respect to \(Q\). We also use the shorthand
\[
\Pi_Q:=\Pi^{\pi_Q}.
\]

\subsection{Switching-system representation of Q-VI}
With this vector notation, Q-VI can be written as a switching system without altering the recursion. In compact form, the Bellman update is
\[
Q_{k+1}=R+\gamma P\Pi_{Q_k}Q_k=FQ_k,
\]
where the active matrix is determined by the tie-broken greedy policy induced by the current iterate. For each deterministic policy \(\pi_i\in\Theta\), write
\[
A_i:=\gamma P\Pi^{\pi_i},
\qquad i=1,\ldots,M:=|\Theta|.
\]
The detailed affine switching-system representation of the error, together with the Q-VI algorithm and the basic norm and JSR facts for the full switching family, is given in Appendix~\ref{app:qvi-switching-model}; see in particular \cref{alg:qvi,lem:maxnorm-AQ,lem:full-jsr-gamma}. This formulation supplies the dynamical-systems notation used below. The geometric analysis begins with the invariant tube around \(\mathcal X_1\).
\stepcounter{lemma}
\stepcounter{lemma}

\section{Invariant tube and practically optimal solution set}\label{sec:invariant-tube}

The switching-system viewpoint summarized above, and detailed in Appendix~\ref{app:qvi-switching-model}, gives a dynamical framework for Q-VI trajectories. The analysis rests on two geometric objects: the practically optimal solution set (POSS) and an invariant tube around a distinguished affine subspace. Together they formalize the fact that Q-VI may identify the optimal action class before it reaches \(Q^*\).
The POSS is defined by
\[
\mathcal X^* :=
\left\{
Q\in\mathbb R^{|{\cal S}||{\cal A}|}:\ \pi_Q(s)\in \Phi^*(s),\ \forall s\in {\cal S}
\right\},
\]
where \(\pi_Q\) is the tie-broken greedy policy induced by \(Q\):
\[
{\pi _Q}(s): = \argmax _{a \in {\cal A}}Q(s,a).
\]
Thus, \(Q\in\mathcal X^*\) means that the induced tie-broken greedy policy \(\pi_Q\) is optimal, even if \(Q\neq Q^*\). We also define the affine space
\[
\mathcal X_1
:=
\left\{
Q\in\mathbb R^{|{\cal S}||{\cal A}|}:
Q=Q^*+\alpha \mathbf{1},\ \alpha\in\mathbb R
\right\}
=
\operatorname{span}(\mathbf{1})+Q^*.
\]
Adding a constant \(\alpha\mathbf 1\) to \(Q^*\) does not change action orderings. Therefore, the resulting tie-broken greedy policy remains optimal. Hence \(\mathcal X_1\subset\mathcal X^*\); see~\cref{lem:X1-subset-Xstar} in Appendix~\ref{app:moved-main-results}.
\stepcounter{lemma}
The affine set \(\mathcal X_1\) plays a central role because it is invariant under the Bellman operator \(F\).
\stepcounter{proposition}
This invariance property is stated and proved in~\cref{prop:X1-invariant} in Appendix~\ref{app:moved-main-results}.

It remains to ask whether \(\mathcal X^*\) itself is invariant under \(F\). In general it is not. However, a sufficiently small tube around \(\mathcal X_1=Q^*+\operatorname{span}(\mathbf 1)\) is both invariant and contained in \(\mathcal X^*\). This link between set invariance and policy identification is the key point of the section. We first exclude the degenerate case in which every action is optimal at every state.
\begin{assumption}[Optimal-class separation]\label{assump:optimal-class-separation}
Let
\[
{\cal S}_{\mathrm{sep}}
:=
\{s\in {\cal S}:\ \Phi^*(s)\neq {\cal A}\}.
\]
We assume throughout that \({\cal S}_{\mathrm{sep}}\neq\varnothing\).
\end{assumption}
\cref{assump:optimal-class-separation} guarantees that at least one state has a strict separation between optimal and non-optimal actions. For each \(s\in{\cal S}_{\mathrm{sep}}\), define
\[
\bar\Delta_s
:=
V^*(s)-\max_{a\notin \Phi^*(s)}Q^*(s,a),
\]
and let
\[
\bar\Delta
:=
\min_{s\in {\cal S}_{\mathrm{sep}}}\bar\Delta_s.
\]
Since the action space is finite, \(\bar\Delta>0\). If \({\cal S}_{\mathrm{sep}}=\varnothing\), then every action is optimal at every state, and the identification problem is trivial. The assumption is imposed only to exclude this case.
The invariant-tube statement is as follows.
\begin{proposition}[Invariant tube inside $\mathcal X^*$]\label{prop:invariant-tube}
Let \cref{assump:optimal-class-separation} hold, and fix any $\delta\in(0,\bar\Delta/2)$. Define the tube around $\mathcal X_1$ by
\[
{\cal T}_\delta
:=
\left\{
Q\in\mathbb R^{|{\cal S}||{\cal A}|}:\ \dist_\infty(Q,\mathcal X_1)\le \delta
\right\}.
\]
Then \({\cal T}_\delta \subset \mathcal X^*\). Moreover,
\[
F({\cal T}_\delta)\subset {\cal T}_{\gamma\delta}\subset {\cal T}_\delta,
\]
so \({\cal T}_\delta\) is invariant under the Bellman operator \(F\).
\end{proposition}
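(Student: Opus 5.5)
The proof has two independent parts: the inclusion \({\cal T}_\delta\subset\mathcal X^*\), which is a pointwise margin argument, and the invariance, which combines the \(\gamma\)-contraction of \(F\) with its equivariance under shifts by \(\mathbf 1\).

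\textbf{Inclusion \({\cal T}_\delta\subset\mathcal X^*\).} First I will note that \(\dist_\infty(\cdot,\mathcal X_1)\) is attained, since \(\alpha\mapsto\|Q-Q^*-\alpha\mathbf 1\|_\infty\) is continuous and coercive. So for \(Q\in{\cal T}_\delta\) there is \(\alpha\in\mathbb R\) with \(|Q(s,a)-Q^*(s,a)-\alpha|\le\delta\) for all \((s,a)\).

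Fix \(s\). If \(s\notin{\cal S}_{\mathrm{sep}}\), then \(\Phi^*(s)={\cal A}\), so \(\pi_Q(s)\in\Phi^*(s)\) trivially. If \(s\in{\cal S}_{\mathrm{sep}}\), take any \(a^*\in\Phi^*(s)\) and any \(a\notin\Phi^*(s)\). Then
\[
Q(s,a^*)-Q(s,a)\ \ge\ \bigl(Q^*(s,a^*)-Q^*(s,a)\bigr)-2\delta\ \ge\ \bar\Delta_s-2\delta\ \ge\ \bar\Delta-2\delta\ >\ 0 .
\]
Hence every maximizer of \(Q(s,\cdot)\) lies in \(\Phi^*(s)\). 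This holds in particular for the tie-broken \(\argmax\), whatever the tie-breaking rule, so \(\pi_Q(s)\in\Phi^*(s)\). The only delicate point is that ties in \(Q(s,\cdot)\) can only occur inside \(\Phi^*(s)\), because the margin against non-optimal actions is strict. That strictness is exactly where \(\delta<\bar\Delta/2\) is used.

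\textbf{Invariance.} I will use two facts.
\begin{itemize}
\item Shift equivariance: \(F(Q+\alpha\mathbf 1)=FQ+\gamma\alpha\mathbf 1\). This holds because \(\max_{a'}(Q(s',a')+\alpha)=\max_{a'}Q(s',a')+\alpha\) and \(P\) is row-stochastic. Together with \(FQ^*=Q^*\), it gives \(F(Q^*+\alpha\mathbf 1)=Q^*+\gamma\alpha\mathbf 1\in\mathcal X_1\). This is presumably the content of \cref{prop:X1-invariant}, but I will rederive it here.
\item Sup-norm contraction: \(\|FQ-FQ'\|_\infty\le\gamma\|Q-Q'\|_\infty\). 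This follows from \(|\max_{a'}x_{a'}-\max_{a'}y_{a'}|\le\|x-y\|_\infty\) together with row-stochasticity, and is the same argument behind \cref{lem:qvi-contraction}.
\end{itemize}

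Now let \(Q\in{\cal T}_\delta\) and take the minimizing \(\alpha\) as above. Then
\[
\dist_\infty(FQ,\mathcal X_1)\le\|FQ-F(Q^*+\alpha\mathbf 1)\|_\infty\le\gamma\|Q-Q^*-\alpha\mathbf 1\|_\infty\le\gamma\delta .
\]
So \(F({\cal T}_\delta)\subset{\cal T}_{\gamma\delta}\). Since \(\gamma<1\), we also have \({\cal T}_{\gamma\delta}\subset{\cal T}_\delta\), which gives the invariance.

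I do not expect a real obstacle in either part. The only care needed is handling tie-breaking correctly in the inclusion step and making sure the infimum defining the distance is attained; if one prefers not to argue attainment, taking an \(\epsilon\)-approximate minimizer and letting \(\epsilon\to0\) works equally well.
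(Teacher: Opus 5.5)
Your proof is correct. The inclusion \({\cal T}_\delta\subset\mathcal X^*\) uses the same margin argument as the paper. You also justify that the infimum defining \(\dist_\infty(Q,\mathcal X_1)\) is attained, which the paper takes for granted.

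For the invariance, your route is different. The paper does not use the nonlinear contraction of \(F\). It feeds the first part back in: since \(Q\in\mathcal X^*\), the greedy policy satisfies \(\pi_Q\in\Theta^*\). With \(Y=Q^*+\alpha\mathbf 1\), this gives \(R+\gamma P\Pi_Q Y=Q^*+\gamma\alpha\mathbf 1\in\mathcal X_1\) and \(F(Q)-(R+\gamma P\Pi_Q Y)=\gamma P\Pi_Q(Q-Y)\), whose sup-norm is at most \(\gamma\delta\). You instead combine the shift-equivariance \(F(Q+\alpha\mathbf 1)=FQ+\gamma\alpha\mathbf 1\) with the global \(\gamma\)-contraction of \(F\).

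This makes the two halves genuinely independent. Your argument shows \(\dist_\infty(FQ,\mathcal X_1)\le\gamma\,\dist_\infty(Q,\mathcal X_1)\) for every \(Q\). Hence every tube \({\cal T}_\delta\), \(\delta\ge 0\), is invariant, and the hypothesis \(\delta<\bar\Delta/2\) is needed only for the inclusion in \(\mathcal X^*\). Iterating also gives \(\dist_\infty(Q_k,\mathcal X_1)\le\gamma^k\dist_\infty(Q_0,\mathcal X_1)\). The basic entrance bound of \cref{cor:basic-entrance-Xstar} could therefore be stated with \(\dist_\infty(Q_0,\mathcal X_1)\le\|Q_0-Q^*\|_\infty\) in place of \(\|Q_0-Q^*\|_\infty\).

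What the paper's route buys in exchange is structural. It shows that the active mode inside the tube is an optimal policy, and that the motion relative to \(\mathcal X_1\) is the linear map \(\gamma P\Pi_Q\). This is exactly the switching-system picture the paper relies on for the post-identification dynamics in \cref{thm:two-stage-convergence}.
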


\noindent\emph{Proof.} See Appendix~\ref{app:proof-prop:invariant-tube}.

Consequently, once \(Q_k\) enters \({\cal T}_\delta\), every later Q-VI iterate remains in the tube. Because \({\cal T}_\delta\subset\mathcal X^*\), tube entrance also means POSS entrance. A basic finite-time entrance bound based on the standard \(\gamma\)-contraction is given in~\cref{cor:basic-entrance-Xstar} in Appendix~\ref{app:moved-main-results}.
\stepcounter{corollary}

Thus Q-VI need not reach \(Q^*\) in finite time in order for its tie-broken greedy policy to become optimal. In this sense it has a finite policy-identification property analogous to policy iteration.

The remaining issue is quantitative. Q-VI converges to \(Q^*\) at the classical rate \(\gamma\), but the approach to the invariant tube can be faster. The next section makes this precise using a restricted JSR.

The mechanism in~\cref{fig:overall-geometric-picture} underlies both the invariant-tube argument and the faster identification result. The Q-VI trajectory first moves toward \(\mathcal X_1=Q^*+\operatorname{span}(\mathbf 1)\), enters an invariant tube \({\cal T}_\delta\subset\mathcal X^*\), and thereby identifies an optimal tie-broken greedy policy in finite time. After identification, the remaining evolution continues toward \(Q^*\), possibly along a slower mode associated with the all-ones direction.

\section{Faster identification of the POSS}\label{sec:faster-identification}
We next estimate how quickly Q-VI approaches the tube \({\cal T}_\delta\). Since \(\mathcal X_1\) is the center line of this tube, it suffices to analyze how quickly the iterates approach \(\mathcal X_1\). Although the tube is defined using \(\dist_\infty\), the Euclidean distance used below is sufficient for tube entrance because
\[
\dist_\infty(Q,\mathcal X_1)
\le
\dist_2(Q,\mathcal X_1),
\qquad \forall Q\in\mathbb R^{|{\cal S}||{\cal A}|}.
\]
Thus a sufficiently small \(\ell_2\)-distance to \(\mathcal X_1\) implies membership in the corresponding \(\ell_\infty\)-tube. The first step is an exact switching-system representation of the Q-VI error over stochastic policies. The main idea behind the next lemma is adapted from the affine-map and linear time-varying viewpoint of Goyal and Grand-Cl\'ement~\cite{goyal2023first}: the Bellman maximization can be represented by selecting maps from a family, and products of those maps can then be analyzed dynamically. Here we use the same idea for Q-VI, but represent the Bellman max-difference exactly through a state-dependent stochastic policy and then project away the all-ones direction.
\begin{lemma}[Exact stochastic-policy representation of the Q-VI error]
\label{lem:exact-stochastic-representation}
Let
\[
e_k := Q_k - Q^*\in{\mathbb R}^{|{\cal S}||{\cal A}|}.
\]
Then, for each \(k\ge 0\), there exists a stochastic policy \(\mu_k:{\cal S}\to\Delta_{|{\cal A}|}\) such that
\[
e_{k+1} = A_{\mu_k} e_k,
\]
where \(A_\mu := \gamma P\Pi^\mu\in{\mathbb R}^{|{\cal S}||{\cal A}|\times |{\cal S}||{\cal A}|}\).
\end{lemma}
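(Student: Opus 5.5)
The plan is to compute \(e_{k+1}\) directly from the Bellman equations and reduce the problem to a statewise scalar identity. Since \(Q_{k+1}=FQ_k\) and \(Q^*=FQ^*\), subtracting the two recursions makes the reward term cancel, and we obtain
\[
e_{k+1}(s,a)=\gamma\sum_{s'\in{\cal S}}P(s'|s,a)\Bigl(\max_{a'\in{\cal A}}Q_k(s',a')-\max_{a'\in{\cal A}}Q^*(s',a')\Bigr).
\]
In vector form this reads \(e_{k+1}=\gamma P\,d_k\), where \(d_k\in\mathbb R^{|{\cal S}|}\) collects the max-differences. Since \((\Pi^\mu e)(s')=\sum_{a'}\mu(s')_{a'}e(s',a')\) by the definition in \cref{eq:policy-transition-matrix}, it suffices to construct, for each state \(s'\) separately, a probability vector \(\mu_k(s')\in\Delta_{|{\cal A}|}\) such that
\[
d_k(s')=\sum_{a'}\mu_k(s')_{a'}\,e_k(s',a').
\]
Stacking these vectors then gives \(d_k=\Pi^{\mu_k}e_k\), and therefore \(e_{k+1}=\gamma P\Pi^{\mu_k}e_k=A_{\mu_k}e_k\).

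For the statewise construction, fix \(s'\) and let \(a_1=\pi_{Q_k}(s')\) and \(a_2=\pi_{Q^*}(s')\) be the two greedy actions. Maximality gives the sandwich
\[
Q_k(s',a_2)-Q^*(s',a_2)\le d_k(s')\le Q_k(s',a_1)-Q^*(s',a_1),
\]
that is, \(e_k(s',a_2)\le d_k(s')\le e_k(s',a_1)\). The value \(d_k(s')\) therefore lies in the closed interval spanned by \(e_k(s',a_2)\) and \(e_k(s',a_1)\). Hence there is some \(\theta\in[0,1]\) with
\[
d_k(s')=\theta e_k(s',a_1)+(1-\theta)e_k(s',a_2).
\]
We then set \(\mu_k(s')=\theta e_{a_1}+(1-\theta)e_{a_2}\), which is a valid element of the simplex. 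In the degenerate case \(e_k(s',a_1)=e_k(s',a_2)\), any \(\theta\) works, and we take \(\theta=1\). When \(a_1=a_2\), the vector \(\mu_k(s')\) is simply the one-hot vector \(e_{a_1}\).

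The only step needing care is this sandwich inequality together with the explicit choice of \(\theta\), which covers the degenerate case. Everything else is bookkeeping with the Kronecker-product indexing of \(\Pi^\mu\), and that indexing is consistent with the vectorization \(Q(s,a)=(e_a\otimes e_s)^\top Q\). The resulting \(\mu_k\) depends on the current iterate \(Q_k\), which is exactly what the lemma permits. The matrix \(A_{\mu_k}\) is a convex combination, row by row, of the deterministic-policy matrices \(A_i\), a fact that will be useful later for the JSR analysis.
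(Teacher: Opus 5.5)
Your proof is correct and follows essentially the same route as the paper. You subtract the Bellman equations to get $e_{k+1}=\gamma P d_k$, write each $d_k(s')$ as a convex combination of the entries $e_k(s',\cdot)$, and stack the resulting probability vectors into $\Pi^{\mu_k}$. The only difference is sharper but inessential: you sandwich $d_k(s')$ between $e_k(s',\pi_{Q^*}(s'))$ and $e_k(s',\pi_{Q_k}(s'))$ rather than between $\min_{a} e_k(s',a)$ and $\max_{a} e_k(s',a)$, which shows that $\mu_k(s')$ can be supported on at most the two greedy actions.
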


\noindent\emph{Proof.} See Appendix~\ref{app:proof-lem:exact-stochastic-representation}.

The stochastic policy \(\mu_k\) in \cref{lem:exact-stochastic-representation} is only an auxiliary representation of the Bellman max-difference as a convex combination of current errors. It is generally not the greedy policy \(\pi_{Q_k}\), need not be unique, and is not implemented by the algorithm.

Thus the Q-VI error admits an exact linear switching representation; no approximation is involved. The maximization in the Bellman operator is absorbed into the state-dependent stochastic policy \(\mu_k\). This permits the use of spectral and Lyapunov tools for the linear maps \(A_\mu\) when studying convergence toward \(\mathcal X_1\).

Because our main object is the distance from \(Q_k\) to
\[
\mathcal X_1 = Q^* + \mathrm{span}(\mathbf{1}),
\]
we isolate the component of the error orthogonal to \(\mathrm{span}(\mathbf 1)\). The component along \(\mathbf 1\) only shifts the Q-function uniformly and does not affect the tie-broken greedy policy. Let
\[
\proj := I-\frac{1}{n}\mathbf{1}\mathbf{1}^\top,
\qquad
n:=|{\cal S}||{\cal A}|,
\]
be the orthogonal projection onto \(\mathrm{span}(\mathbf{1})^\perp\). Define the projected error
\[
z_k := \proj e_k\in{\mathbb R}^{n},
\]
the restricted matrices
\[
\bar A_i = \proj A_i \proj,\quad i\in \{1,2,\ldots,M\},
\]
and, for each stochastic policy \(\mu\),
\[
\bar A_\mu := \proj A_\mu \proj.
\]
We denote the deterministic restricted family and its JSR by
\[
\bar{\cal H}:=\{\bar A_1,\bar A_2,\ldots,\bar A_M\},
\qquad
\bar\rho:=\rho(\bar A_1,\bar A_2,\ldots,\bar A_M).
\]
The projected error then evolves under the restricted stochastic-policy family, and its norm is exactly the distance to \(\mathcal X_1\).
\begin{lemma}[Projected error dynamics]
\label{lem:projected-error-dynamics}
The projected error \(z_k\) satisfies
\[
z_{k+1}=\bar A_{\mu_k} z_k
\qquad \forall k\ge 0.
\]
Moreover, \(\dist_2(Q_k,\mathcal X_1)=\|z_k\|_2\).
\end{lemma}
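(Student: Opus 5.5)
The plan is to start from the exact error recursion of \cref{lem:exact-stochastic-representation}, \(e_{k+1}=A_{\mu_k}e_k\), and push it through the projection \(\proj\). Apply \(\proj\) to both sides to get \(z_{k+1}=\proj A_{\mu_k} e_k\). Then decompose \(e_k = \proj e_k + (I-\proj)e_k = z_k + c_k\mathbf 1\), where \(c_k := \frac1n\mathbf 1^\top e_k\). This gives
\[
z_{k+1} = \proj A_{\mu_k} z_k + c_k\,\proj A_{\mu_k}\mathbf 1 .
\]

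The key structural fact is that \(\mathbf 1\) is an eigenvector of every \(A_\mu\). Indeed, \(\Pi^\mu\mathbf 1=\mathbf 1\), because each row of \(\Pi^\mu\) is \(\mu(s)^\top\otimes e_s^\top\) with \(\mu(s)\in\Delta_{|{\cal A}|}\), so its entries sum to one. Likewise \(P\mathbf 1=\mathbf 1\), because each row of each \(P_a\) is a probability distribution. Hence \(A_\mu\mathbf 1=\gamma\mathbf 1\), and therefore \(\proj A_{\mu_k}\mathbf 1=\gamma\proj\mathbf 1=0\). The second term vanishes, leaving \(z_{k+1}=\proj A_{\mu_k} z_k\). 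Since \(z_k\in\operatorname{span}(\mathbf 1)^\perp\), we have \(z_k=\proj z_k\). This yields \(z_{k+1}=\proj A_{\mu_k}\proj z_k=\bar A_{\mu_k}z_k\), as claimed. This step is the only place any care is needed: the identity \(A_\mu\mathbf 1=\gamma\mathbf 1\) has to hold for stochastic policies, not only deterministic ones.

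For the distance identity, write
\[
\dist_2(Q_k,\mathcal X_1)=\inf_{\alpha\in\mathbb R}\|Q_k-Q^*-\alpha\mathbf 1\|_2=\inf_{\alpha}\|e_k-\alpha\mathbf 1\|_2 .
\]
This is the Euclidean distance from \(e_k\) to the subspace \(\operatorname{span}(\mathbf 1)\). By the projection theorem, this infimum is attained at the orthogonal projection \(\alpha=c_k\), and it equals the norm of the orthogonal component. That component is \(\|\proj e_k\|_2=\|z_k\|_2\), which completes the argument. No step here looks like a real obstacle.
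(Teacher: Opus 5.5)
Your proof is correct and follows essentially the paper's argument: split $e_k$ into its $\mathbf 1$-component and $z_k$, use $A_{\mu_k}\mathbf 1=\gamma\mathbf 1$ so that $\proj$ annihilates the constant part, and identify $\dist_2(Q_k,\mathcal X_1)$ with $\|\proj e_k\|_2$ via orthogonal projection onto $\operatorname{span}(\mathbf 1)$. Your explicit check that $\Pi^\mu\mathbf 1=\mathbf 1$ and $P\mathbf 1=\mathbf 1$ for stochastic $\mu$ supplies a step the paper uses without comment.
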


\noindent\emph{Proof.} See Appendix~\ref{app:proof-lem:projected-error-dynamics}.

Several auxiliary facts about the restricted matrices are needed to analyze the projected dynamics in~\cref{lem:projected-error-dynamics}. Specifically, \cref{lem:restricted-matrix-properties} records how the projection removes the all-ones direction, \cref{lem:projection-product-identity} gives the corresponding product identity, \cref{lem:restricted-spectral-radius} characterizes the spectral radius of each restricted subsystem, and \cref{lem:jsr-upper-bound} gives a norm-based computable upper bound on the restricted JSR. These auxiliary results are stated and proved in Appendix~\ref{app:moved-main-results}.
\stepcounter{lemma}
\stepcounter{lemma}
\stepcounter{lemma}
\stepcounter{lemma}

\begin{lemma}\label{lem:restricted-jsr-upper-gamma}
The JSR of the restricted switching system satisfies
\[
\rho(\bar A_1,\ldots,\bar A_M)\le \gamma.
\]
\end{lemma}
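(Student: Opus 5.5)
Since \(P\) is a row-stochastic block matrix and \(\Pi^{\pi_i}\) selects one action per state, each \(P\Pi^{\pi_i}\) is row-stochastic, so \(A_i\mathbf 1=\gamma\mathbf 1\). This yields the identity
\[
\proj A_i \proj = \proj A_i - \tfrac{1}{n}\proj A_i\mathbf 1\mathbf 1^\top = \proj A_i - \tfrac{\gamma}{n}\proj\mathbf 1\mathbf 1^\top = \proj A_i,
\]
because \(\proj\mathbf 1=0\). Hence \(\bar A_i=\proj A_i\). A product of restricted matrices then telescopes: \(\bar A_{\sigma_k}\cdots\bar A_{\sigma_1}=\proj A_{\sigma_k}\proj A_{\sigma_{k-1}}\cdots\proj A_{\sigma_1}\). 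We can absorb each interior \(\proj\) by using \(\proj A_j\proj=\proj A_j\) repeatedly from the left, which gives
\[
\bar A_{\sigma_k}\cdots\bar A_{\sigma_1}=\proj A_{\sigma_k}\cdots A_{\sigma_1}.
\]
This is presumably the content of \cref{lem:projection-product-identity}, which I would invoke directly.

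With the product identity in hand, I bound the norm of the product. The JSR is independent of the submultiplicative norm, so I use the spectral norm \(\|\cdot\|_2\). Then \(\|\proj A_{\sigma_k}\cdots A_{\sigma_1}\|_2\le\|\proj\|_2\,\|A_{\sigma_k}\cdots A_{\sigma_1}\|_2\le \|A_{\sigma_k}\cdots A_{\sigma_1}\|_2\), since \(\proj\) is an orthogonal projection. For the full product I compare norms: \(\|B\|_2\le\sqrt n\,\|B\|_\infty\). Moreover, \(\|A_{\sigma_k}\cdots A_{\sigma_1}\|_\infty=\gamma^k\), because the product equals \(\gamma^k\) times a row-stochastic matrix (cf.\ \cref{lem:maxnorm-AQ}). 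Therefore
\[
\max_{\bar\sigma_k}\|\bar A_{\sigma_k}\cdots\bar A_{\sigma_1}\|_2^{1/k}\le n^{1/(2k)}\gamma\to\gamma ,
\]
and taking \(k\to\infty\) in \cref{def:joint-spectral-radius} gives \(\bar\rho\le\gamma\).

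The only delicate point is justifying \(\proj A_i\proj=\proj A_i\). This relies on the row sums of \(P\Pi^{\pi_i}\) being exactly one, which holds because each row of \(P\) is a probability distribution over next states and \(\Pi^{\pi_i}\) maps \(\mathbf 1_{|\mathcal S||\mathcal A|}\) to \(\mathbf 1_{|\mathcal S|}\). An alternative route avoids the product identity altogether: apply monotonicity of the JSR, \(\rho(\proj A_i\proj)\le\rho(\{A_i\})\), since each restricted product is a compression of a full product. The right-hand side equals \(\gamma\) by \cref{lem:full-jsr-gamma}, and the same norm-equivalence argument shows that the constant factor from \(\proj\) disappears under the \(1/k\) power.
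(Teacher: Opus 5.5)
Your proof is correct and follows the paper's route: the product identity $\bar A_{\sigma_k}\cdots\bar A_{\sigma_1}=\proj A_{\sigma_k}\cdots A_{\sigma_1}$ (\cref{lem:projection-product-identity}), a bound by $\|\proj\|$ times the norm of the full product, and the fact that full products grow at rate $\gamma$. The differences are cosmetic. You obtain the product identity from the one-line observation $\proj A_i\proj=\proj A_i$ instead of by induction, and you make the norm-comparison factor $\sqrt n$ explicit instead of citing \cref{lem:full-jsr-gamma}.
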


\noindent\emph{Proof.} See Appendix~\ref{app:proof-lem:restricted-jsr-upper-gamma}.

Hence the restricted JSR never exceeds the classical rate \(\gamma\). An explicit exponential estimate follows from a common Lyapunov norm for the restricted family, together with an extension of the Lyapunov inequality to the convex hull. These two ingredients are stated and proved in~\cref{lem:common-lyapunov-restricted-family,lem:convex-hull-extension} in Appendix~\ref{app:moved-main-results}; together, they allow the Lyapunov inequality to be applied directly to the stochastic-policy projected dynamics.
\stepcounter{lemma}
\stepcounter{lemma}

This gives exponential convergence of the actual Q-VI iterates toward \(\mathcal X_1\).
\begin{theorem}[Global exponential convergence to $\mathcal X_1$ for the actual Q-VI]
\label{thm:global-exp-conv-X1}
Fix any $\epsilon>0$ such that
\[
\beta_\epsilon:=\bar\rho+\epsilon \in (0,1).
\]
There exists a constant \(c_\varepsilon>0\), depending on \(\varepsilon>0\), such that
\begin{equation}\label{eq:global-exp-conv-X1}
\dist_2(Q_k,\mathcal X_1)
\le
c_\varepsilon \beta_\varepsilon^k
\dist_2(Q_0,\mathcal X_1),
\qquad
\forall k\ge 0.
\end{equation}
\end{theorem}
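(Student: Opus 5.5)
The proof follows the route the paper sets up. We build a common extremal (Lyapunov) norm for the restricted family \(\bar{\cal H}\) and extend its contraction property to the convex hull. We then apply it to the exact projected dynamics of \cref{lem:projected-error-dynamics}, and finally convert back to the Euclidean norm.

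\emph{Step 1: Lyapunov norm.} Fix \(\epsilon>0\) with \(\beta_\epsilon\in(0,1)\). By the standard extremal-norm construction for the JSR (Rota--Strang), define
\[
\|x\|_\epsilon:=\sup_{k\ge0}\ \max_{\bar\sigma_k\in{\cal M}^k}\beta_\epsilon^{-k}\|\bar A_{\sigma_k}\cdots\bar A_{\sigma_1}x\|_2 ,
\]
where the empty product is \(I\). The supremum is finite because, by the definition of \(\bar\rho\), \(\max_{\bar\sigma_k}\|\bar A_{\sigma_k}\cdots\bar A_{\sigma_1}\|_2\le C_\epsilon\beta_\epsilon^k\) for some constant \(C_\epsilon\). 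This gives
\[
\|x\|_2\le\|x\|_\epsilon\le C_\epsilon\|x\|_2
\quad\text{and}\quad
\|\bar A_i x\|_\epsilon\le\beta_\epsilon\|x\|_\epsilon
\]
for every \(i\). This is the content of \cref{lem:common-lyapunov-restricted-family}, which I would invoke directly.

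\emph{Step 2: Convex hull.} The dynamics use \(\bar A_{\mu_k}\) with \(\mu_k\) stochastic, not the deterministic \(\bar A_i\). The map \(\mu\mapsto\Pi^\mu\) is affine, and each \(\mu\) is a product of simplices whose extreme points are the deterministic policies. Hence \(A_\mu\) is a convex combination \(\sum_i\lambda_iA_i\), and so \(\bar A_\mu=\sum_i\lambda_i\bar A_i\), since projection is linear. The triangle inequality for \(\|\cdot\|_\epsilon\) then gives \(\|\bar A_\mu x\|_\epsilon\le\beta_\epsilon\|x\|_\epsilon\). This is \cref{lem:convex-hull-extension}. The one point needing care is that the convex decomposition of a product of per-state simplices really yields a convex combination of the matrices \(\Pi^{\pi_i}\). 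This holds because \(\Pi^\mu\) is linear in the stacked vector \((\mu(1),\ldots,\mu(|{\cal S}|))\), and every point of that product polytope is a convex combination of its vertices, namely the deterministic policies.

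\emph{Step 3: Iterate and convert.} By \cref{lem:projected-error-dynamics}, \(z_{k+1}=\bar A_{\mu_k}z_k\). Step 2 then gives \(\|z_k\|_\epsilon\le\beta_\epsilon^k\|z_0\|_\epsilon\). Applying the norm equivalence,
\[
\|z_k\|_2\le C_\epsilon\beta_\epsilon^k\|z_0\|_2 .
\]
Using \(\dist_2(Q_k,\mathcal X_1)=\|z_k\|_2\) from the same lemma yields \cref{eq:global-exp-conv-X1} with \(c_\varepsilon=C_\epsilon\).

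The main obstacle is Step 2, transferring the bound from the finite deterministic family to the state-dependent stochastic policies \(\mu_k\). A JSR bound alone, being a statement about products of finitely many matrices, does not control products drawn from the convex hull unless one passes through a genuine norm. That is why the extremal norm of Step 1 is built first rather than working with spectral radii of individual matrices.
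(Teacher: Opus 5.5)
Your proposal is correct and follows essentially the same route as the paper: a common Lyapunov norm for $\bar{\mathcal H}$ (\cref{lem:common-lyapunov-restricted-family}), its extension to stochastic policies through the convex decomposition $\bar A_\mu=\sum_{\pi}c_\pi(\mu)\bar A_\pi$ (\cref{lem:convex-hull-extension}), iteration along $z_{k+1}=\bar A_{\mu_k}z_k$, and conversion back to $\dist_2(Q_k,\mathcal X_1)=\|z_k\|_2$. The only difference is cosmetic. You use the sup-type extremal norm and the triangle inequality, whereas the paper builds $p_\varepsilon=\sqrt{V_\varepsilon^\infty}$ from a weighted sum of squares and applies Jensen's inequality to the convex function $V_\varepsilon^\infty$; both constructions give the same norm equivalence and the same contraction factor $\beta_\varepsilon$.
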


\noindent\emph{Proof.} See Appendix~\ref{app:proof-thm:global-exp-conv-X1}.

This theorem gives exponential convergence to \(\mathcal X_1\). Since \(\bar\rho\le\gamma\), the case \(\bar\rho<\gamma\) permits a choice of \(\varepsilon>0\) with \(\beta_\varepsilon<\gamma\). The transverse approach to \(\mathcal X_1\) is then strictly faster than the standard Q-VI rate for convergence to \(Q^*\).

The same estimate gives an explicit finite-time bound for entrance into the POSS; see~\cref{cor:fast-identification-pos} in Appendix~\ref{app:moved-main-results}. If \(\bar\rho<\gamma\), then \(\varepsilon\) can be chosen so that \(\beta_\varepsilon<\gamma\), and the asymptotic transverse rate is strictly faster than the classical \(\gamma\)-rate. The resulting finite-time bound may be smaller than the basic \(\gamma\)-based estimate, depending on the constant \(c_\varepsilon\) and the initial transverse distance \(\dist_2(Q_0,\mathcal X_1)\).
\stepcounter{corollary}

\cref{cor:fast-identification-pos} gives finite-time identification of the POSS through convergence toward \(\mathcal X_1\). Once the iterate is close enough to \(\mathcal X_1\), its tie-broken greedy policy selects only optimal actions, even though the full Q-function has not yet converged to \(Q^*\). When \(\beta_\varepsilon<\gamma\), this provides a potentially sharper policy-identification estimate than the classical contraction argument.

The geometry is therefore two-stage. First, \(Q_k\) approaches \(\mathcal X_1\), enters \({\cal T}_\delta\), and hence enters \(\mathcal X^*\). From then on, every tie-broken greedy policy \(\pi_{Q_k}\) is optimal, so the remaining iterations amount to evaluation over optimal policies. The final convergence to \(Q^*\) may still be limited by the standard \(\gamma\)-rate.

\section{Two-stage convergence}
The finite-time identification bound in~\cref{cor:fast-identification-pos} gives a two-stage description of the Q-VI trajectory. The full statement is given in~\cref{thm:two-stage-convergence} in Appendix~\ref{app:moved-main-results}. Before identification, the transverse distance to \(\mathcal X_1\) admits exponential upper bounds at any rate larger than the full restricted JSR \(\bar\rho\). After identification, only optimal policies remain active, and the transverse component admits analogous bounds at any rate larger than the JSR \(\bar\rho_*\) of the smaller optimal restricted family. If \(\bar\rho_*<\gamma\), then the post-identification transverse decay is strictly faster than the standard \(\gamma\)-rate, although the full error may still be dominated by the residual all-ones component.

When the optimal policy is unique, the post-identification switching disappears. This sharper special case is stated in~\cref{cor:unique-optimal-policy} in Appendix~\ref{app:moved-main-results}; it reduces the second stage to a single linear system associated with \(\pi^*\), whose transverse rate is \(\rho(\bar A_{\pi^*})=\gamma |\lambda_2(P\Pi^{\pi^*})|\).
\stepcounter{theorem}
\stepcounter{corollary}

The numerical examples below show the mechanism for deterministic Q-VI and for tabular Q-learning. The Q-VI experiment demonstrates finite-time entrance into the POSS through approach to \(\mathcal X_1\), while the Q-learning experiment suggests that the same geometric pattern can persist under stochastic updates.

\section{Structural conditions for \texorpdfstring{$\bar\rho<\gamma$}{rho-bar < gamma}}\label{sec:strict-jsr-conditions}
The strict inequality \(\bar\rho<\gamma\) is tied to uniform mixing of the projected state-action dynamics in directions transverse to \(\operatorname{span}(\mathbf 1)\). The detailed formulation is deferred to Appendix~\ref{app:strict-jsr-conditions}. There, \cref{prop:uniform-scrambling-strict} characterizes strictness through a uniform scrambling condition for deterministic policy products, using scrambling matrices, Dobrushin coefficients, and the diameter seminorm introduced in~\cref{def:scrambling-matrix,def:dobrushin-coefficient,def:Diameter-seminorm}. The supporting contraction facts are collected in~\cref{lem:dobrushin-Diameter-properties}. The appendix also records transparent sufficient conditions, such as common-descendant and Doeblin-type overlap assumptions, together with the standard Markov-chain obstruction caused by nontrivial unit-modulus modes of deterministic policies. In particular, single-policy ergodicity of every \(P_\pi\) is necessary but not sufficient for strictness of the full switched family; arbitrary switching requires the bounded-scrambling condition.

\section{Examples}\label{sec:examples}
This section gives a short summary of the numerical examples; the complete MDP specification, numerical values, and discussion are in Appendix~\ref{app:examples}. The appendix considers a small discounted MDP with $\mathcal S=\{1,2,3\}$, $\mathcal A=\{1,2\}$, and $\gamma=0.95$; for this MDP, the unique optimal deterministic policy is $\pi^*=\bigl(\pi^*(1),\pi^*(2),\pi^*(3)\bigr)=(1,1,1)$. It also includes a tabular Q-learning experiment on the same problem. The figures in this section reproduce the plots from Appendix~\ref{app:examples}.

For Q-VI, the normalized decay plot in \cref{fig:vi-normalized-main} shows that \(\dist_2(Q_k,\mathcal X_1)\) decreases faster than \(\|Q_k-Q^*\|_\infty\) in the transient regime. The projected single-trajectory and multi-trajectory plots in \cref{fig:vi-projection-single-main,fig:vi-projection-multiple-main} show further that the iterates are rapidly drawn toward the affine set \(\mathcal X_1\), enter the tube \({\cal T}_\delta\) in finite time, and then converge to \(Q^*\). The displayed single Q-VI trajectory is used to visualize tube entrance; its initial tie-broken greedy policy is already optimal, so it should not be interpreted as a first-time policy-identification example from a non-optimal greedy policy. For tabular Q-learning, \cref{fig:ql-projection-main} shows the analogous stochastic behavior: despite sample-path fluctuations, trajectories from different initial points tend to approach the neighborhood of \(\mathcal X_1\) first and only afterward refine their convergence toward \(Q^*\).

\begin{figure}[t]
\centering
\includegraphics[width=0.62\linewidth]{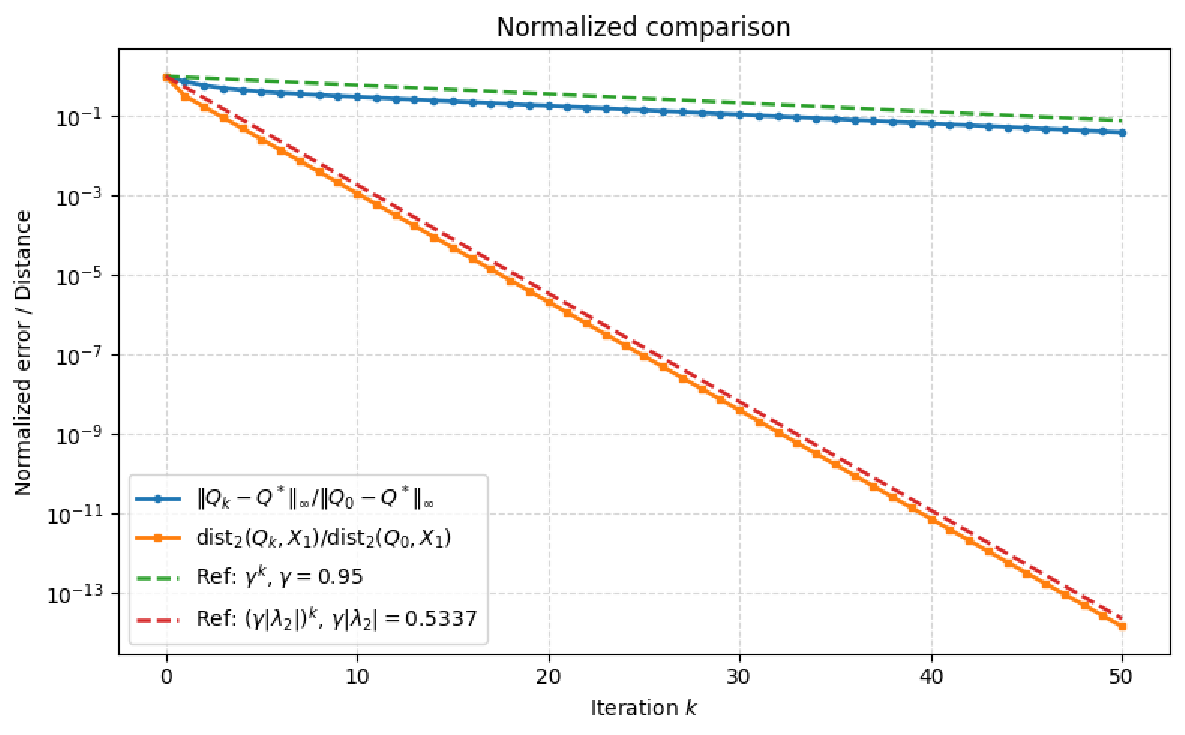}
\caption{Normalized comparison of
\(
\|Q_k-Q^*\|_\infty
\)
and
\(
\dist_2(Q_k,\mathcal X_1)
\)
for the toy MDP. The dashed curves indicate the reference rates
\(
\gamma^k
\)
and
\(
(\gamma|\lambda_2|)^k
\).
The plot shows that the iterate approaches the affine set \(\mathcal X_1\) faster than it
approaches the optimal Q-function \(Q^*\).}
\label{fig:vi-normalized-main}
\end{figure}

\begin{figure}[t]
\centering
\includegraphics[width=0.50\linewidth]{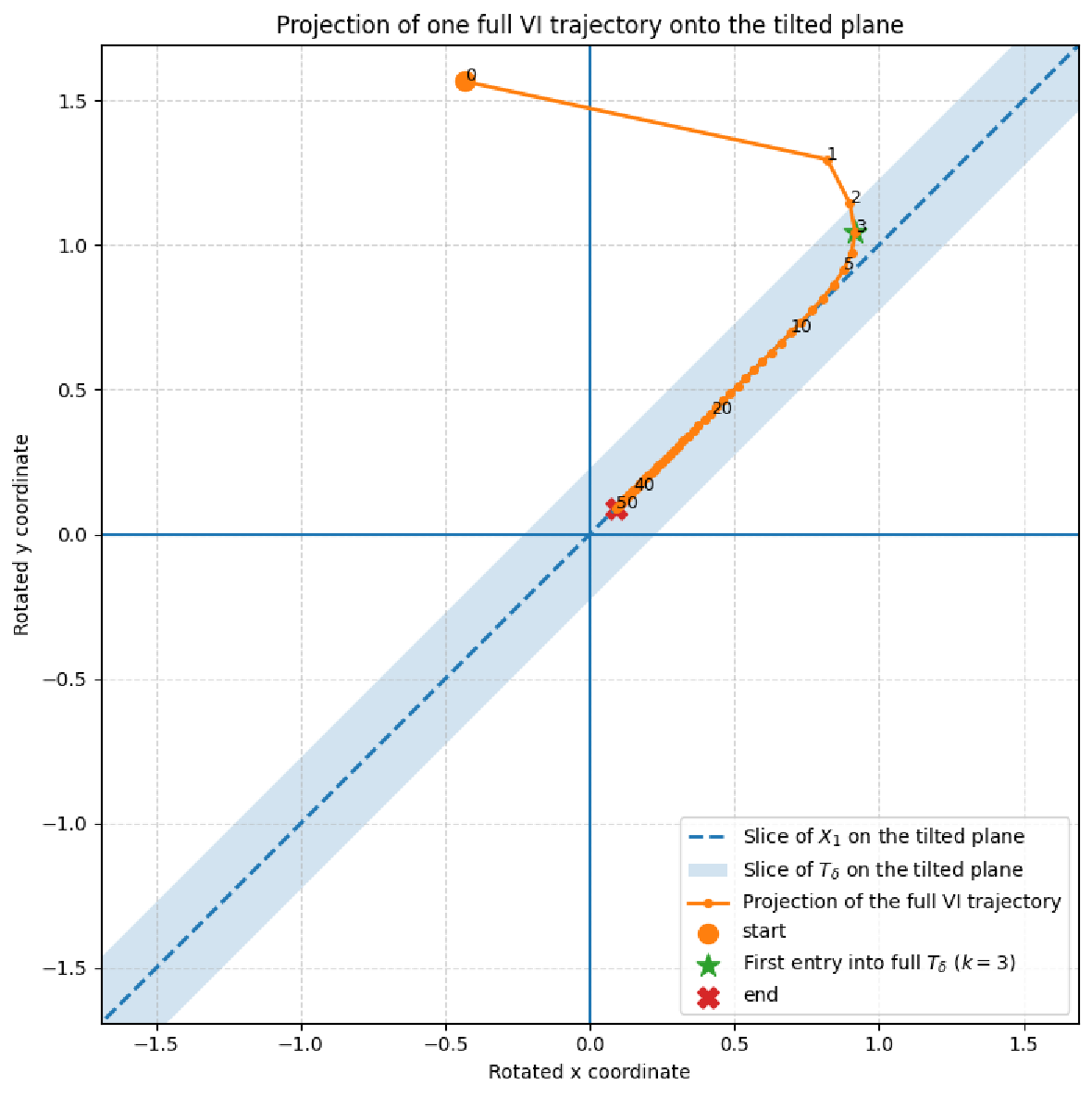}
\caption{Orthogonal projection of a single full Q-VI trajectory onto the tilted plane.
The dashed line is the slice of \(\mathcal X_1\) and the shaded strip is the slice of
the tube \({\cal T}_\delta\).
The projected trajectory starts from the initial point, enters the strip, and then converges to the origin.}
\label{fig:vi-projection-single-main}
\end{figure}

\begin{figure}[t]
\centering
\includegraphics[width=0.50\linewidth]{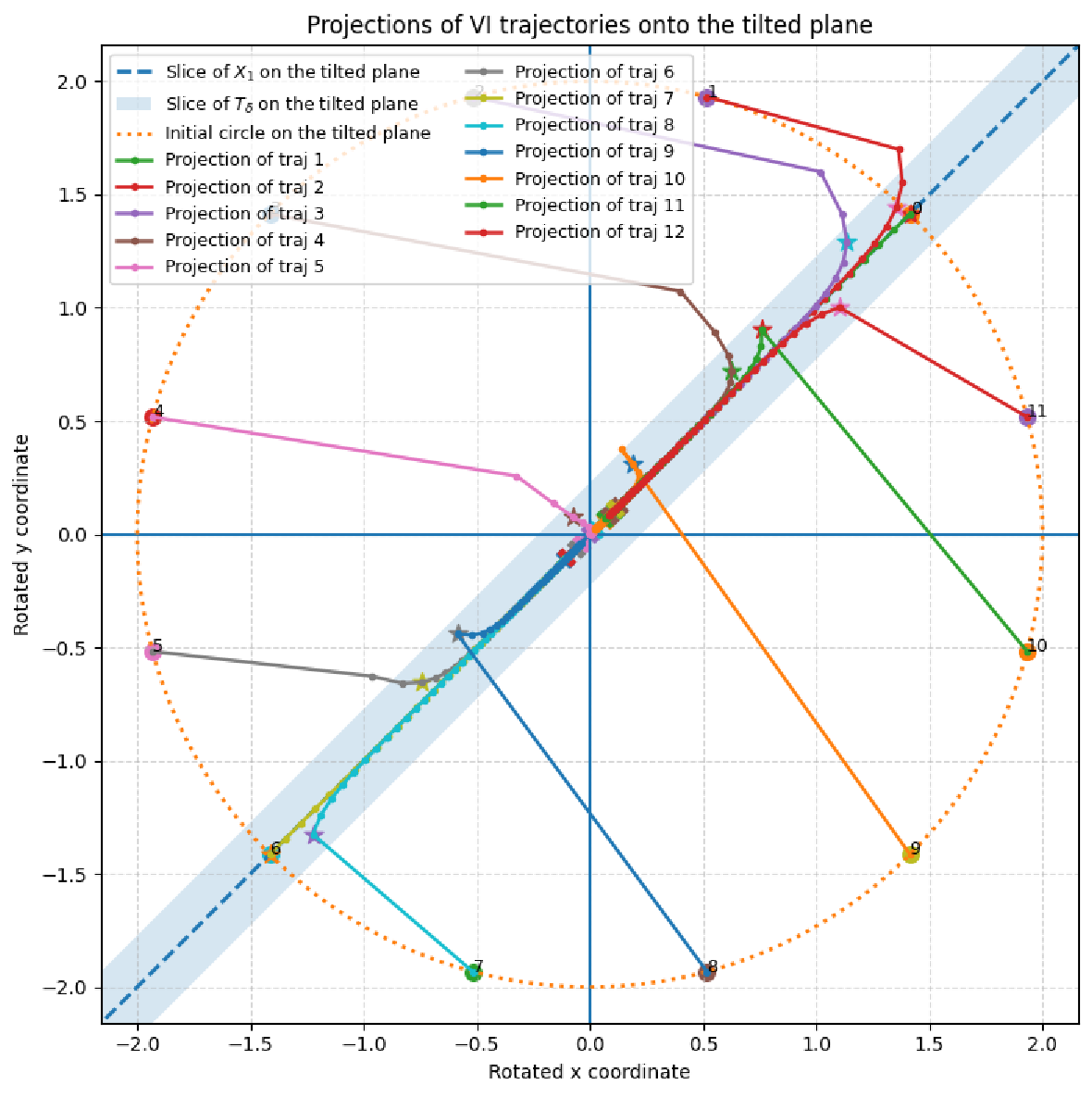}
\caption{Orthogonal projections of \(12\) full Q-VI trajectories onto the tilted plane,
where the initial conditions are chosen uniformly on a circle of radius \(2\) in the plane.
The dashed line is the slice of \(\mathcal X_1\), the shaded strip is the slice of \({\cal T}_\delta\), and the
dotted curve is the initial circle.
The plot gives a global geometric view of trajectories being first attracted toward
\(\mathcal X_1\) and then converging to \(Q^*\).}
\label{fig:vi-projection-multiple-main}
\end{figure}

\begin{figure}[t]
\centering
\includegraphics[width=0.60\linewidth]{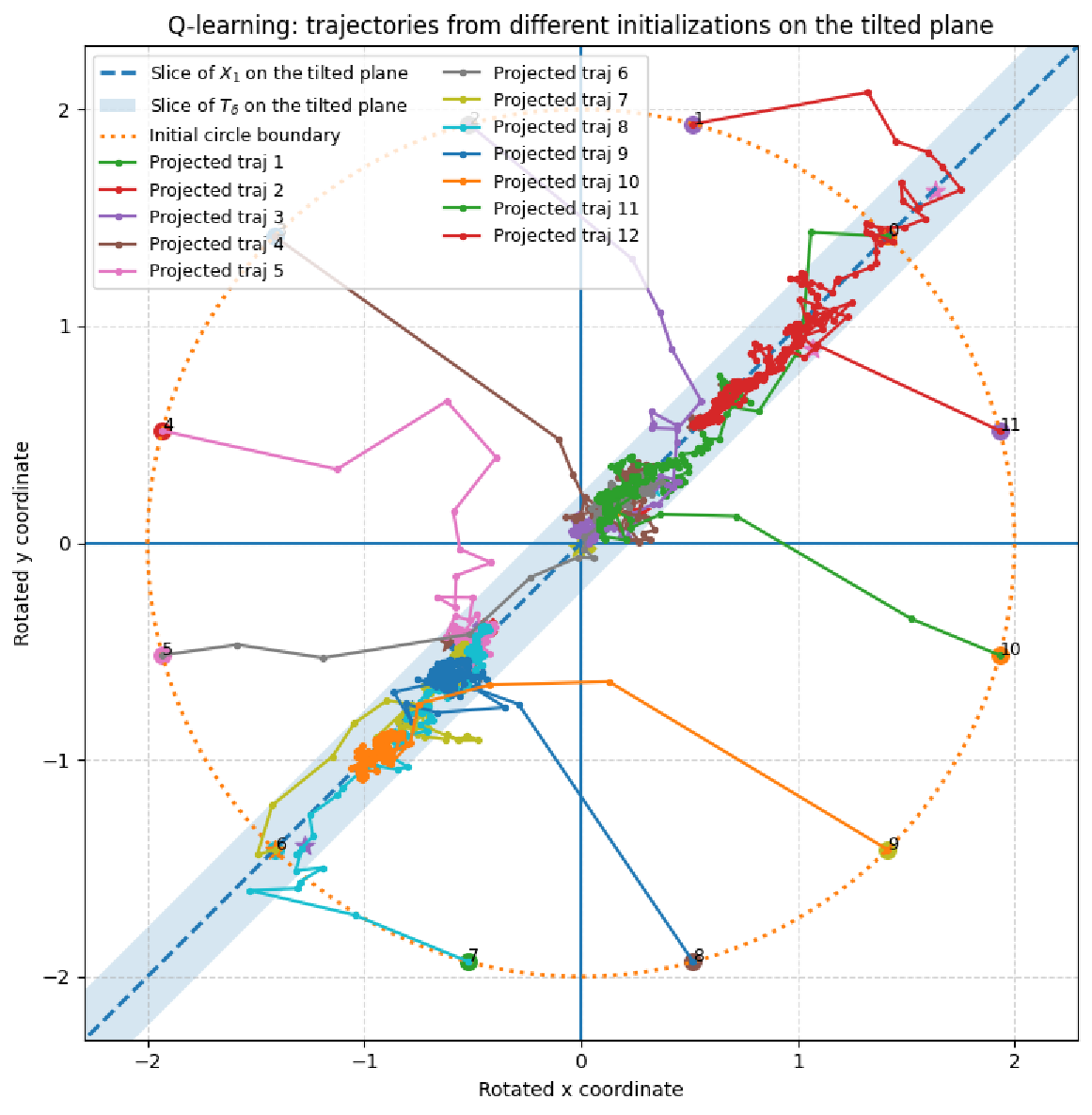}
\caption{Two-dimensional projection of tabular Q-learning trajectories onto the tilted plane
\(Q^*+\operatorname{span}\{\hat{\mathbf 1},\hat d\}\), displayed in the rotated coordinates
\(p=(u-v)/\sqrt{2}\) and \(q=(u+v)/\sqrt{2}\).
The dashed line \(q=p\) is the projection of \(\mathcal X_1\), and the shaded strip is the projection
of the tube \({\cal T}_\delta\), namely \( |q-p|\le 2\delta \).
The dotted circle indicates the boundary of the common initial set in the \((u,v)\)-plane.
Starting from \(12\) different initial points on this circle, the stochastic Q-learning
trajectories tend to enter the strip and then move toward the origin, which corresponds to
\(Q^*\).}
\label{fig:ql-projection-main}
\end{figure}

\section{Conclusion}\label{sec:conclusion}
We studied discounted Q-VI from a switching-system perspective and proved that Q-VI identifies the optimal action class in finite time, before asymptotic convergence to \(Q^*\) is necessarily complete. The mechanism is the approach to the affine set \(\mathcal X_1=Q^*+\operatorname{span}(\mathbf 1)\) and entrance into an invariant tube contained in the POSS. This approach admits exponential upper bounds at any rate larger than the restricted JSR and can be faster than the classical \(\gamma\)-rate when the restricted JSR is strictly smaller than \(\gamma\). We also gave structural conditions, based on spectral and graph-theoretic arguments, that explain when this strict improvement occurs and when Markov-chain obstructions prevent it. The resulting two-stage description separates fast policy identification from final convergence to the Bellman fixed point.

\bibliographystyle{IEEEtran}
\bibliography{reference}

\appendix

\section{Q-VI algorithm and switching-system representation}\label{app:qvi-switching-model}
This appendix gives the algorithmic and switching-system details used in the main text. Algorithm~\ref{alg:qvi} expands the compact recursion in~\cref{eq:qvi-basic}, and the subsequent derivation shows how the same update can be written as an affine switching system whose active mode is determined by the tie-broken greedy policy.

\subsection{Q-VI algorithm}\label{app:qvi-algorithm}
\begin{algorithm}[h]
\caption{Q-VI}\label{alg:qvi}
  \begin{algorithmic}[1]
    \State Initialize $Q_0 \in {\mathbb R}^{|{\cal S}||{\cal A}|}$ arbitrarily.
    \For{iteration $k=0,1,\ldots$}
        \State Update
        \[
Q_{k + 1} (s,a) = \underbrace {R(s,a) + \gamma \sum\limits_{s' \in {\cal S}} {P(s'|s,a)\max _{a' \in {\cal A}} Q_k (s',a')}}_{ = :F Q_k }
\]
    \EndFor
  \end{algorithmic}
\end{algorithm}
Algorithm~\ref{alg:qvi} is exactly the componentwise form of \(Q_{k+1}=FQ_k\) in~\cref{eq:qvi-basic}. The componentwise form above makes explicit how the maximization over next-state actions enters the Bellman update.

\subsection{Switching-system model of Q-VI}\label{app:switching-model-qvi}
With the notation and the Bellman update in place, we reformulate the iteration as a switching system. The reformulation makes explicit that the iterate is governed by affine subsystems whose active mode is determined by the tie-broken greedy policy of the current \(Q\)-function.

Using the notation introduced in the main text, the update in Algorithm~\ref{alg:qvi} can be rewritten as
\begin{equation}\label{eq:qvi-bellman-vector}
Q_{k+1}= R+\gamma P\Pi_{Q_k}Q_k:=F(Q_k).
\end{equation}
Recall the definitions of $\pi_Q(s)$ and $\Pi_Q$. Invoking the optimal Bellman equation $(\gamma P\Pi_{Q^*}-I)Q^*+R=0$,~\cref{eq:qvi-bellman-vector} can be further rewritten as
\[
(Q_{k + 1}  - Q^* ) = \gamma P\Pi _{Q_k } (Q_k  - Q^* ) + \gamma P(\Pi _{Q_k }  - \Pi _{Q^* } )Q^*.
\]
Thus, in error coordinates, Q-VI is an affine switching system. In particular, for any $Q \in {\mathbb R}^{|{\cal S}||{\cal A}|}$, define
\[
A_Q : = \gamma P\Pi _Q \in {\mathbb R}^{|{\cal S}||{\cal A}|\times |{\cal S}||{\cal A}|},\quad b_Q : = \gamma P(\Pi _Q  - \Pi _{Q^* } )Q^* \in {\mathbb R}^{|{\cal S}||{\cal A}|}.
\]
Then Q-VI can be represented as
\begin{equation}\label{eq:qvi-switched-affine-error}
Q_{k + 1}- Q^* = A_{Q_k} (Q_k - Q^*) + b_{Q_k},
\end{equation}
where $A_{Q_k}$ and $b_{Q_k}$ switch among matrices from $\{\gamma P\Pi^\pi:\pi\in \Theta\}$ and vectors from $\{\gamma P(\Pi^\pi - \Pi^{\pi^*})Q^* :\pi\in\Theta \}$ according to the changes of $Q_k$.

To make this switching description finite-dimensional in the usual switching-system notation, let $\varphi :\Theta  \to \{ 1,2, \ldots ,|\Theta |\}$ be a one-to-one mapping from a deterministic policy $\pi \in \Theta$ to an integer in $\{ 1,2, \ldots ,|\Theta |\}$. For $i = \varphi(\pi)$, define
\[
{A_i} = \gamma P{\Pi ^\pi } \in {\mathbb R}^{|{\cal S}||{\cal A}| \times |{\cal S}||{\cal A}|},
\qquad
{b_i} = \gamma P(\Pi^\pi - \Pi^{\pi^*})Q^* \in {\mathbb R}^{|{\cal S}||{\cal A}|}.
\]
Then~\cref{eq:qvi-switched-affine-error} is an affine switching system with switching signal ${\sigma _k} \in \{ 1,2, \ldots ,|\Theta |\}$ determined by ${\sigma _k} = \varphi ({\pi _k})$, where
\[
{\pi _k}(\cdot): = \arg {\max _{a \in {\cal A}}}{Q_k}( \cdot ,a) \in \Theta.
\]
Two elementary facts connect this affine switching model with the classical contraction rate. The first gives a uniform norm bound for every active subsystem, and the second identifies the JSR of the full switching family.

\setcounter{lemma}{1}
\begin{lemma}[\cite{lee2023discrete}]\label{lem:maxnorm-AQ}
For any $Q \in {\mathbb R}^{|{\cal S}||{\cal A}|}$, $\|A_Q \|_\infty = \gamma$, where the matrix norm  $\| A \|_\infty :=\max_i \sum_j {|A_{ij} |}$ and $A_{ij}$ is the element of $A$ in the $i$-th row and $j$-th column.
\end{lemma}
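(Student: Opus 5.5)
The plan is to compute the induced row-sum norm of \(A_Q=\gamma P\Pi_Q\) directly from the structure of its two factors. Both \(P\) and \(\Pi_Q\) are row-stochastic and entrywise nonnegative, so their product should again be row-stochastic, and multiplying by \(\gamma\) then fixes every absolute row sum at exactly \(\gamma\).

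First, \(P\in\mathbb R^{|{\cal S}||{\cal A}|\times|{\cal S}|}\) stacks the blocks \(P_a=P(\cdot|\cdot,a)\). The row indexed by \((s,a)\) is the distribution \(P(\cdot|s,a)\), so it is nonnegative and sums to one; equivalently, \(P\mathbf 1=\mathbf 1\).

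Second, by \cref{eq:policy-transition-matrix} with the one-hot vector \(\vec\pi_Q(s)=e_{\pi_Q(s)}\), the row of \(\Pi_Q\) indexed by \(s\) is \(e_{\pi_Q(s)}^\top\otimes e_s^\top\). This is a single standard basis row vector of length \(|{\cal S}||{\cal A}|\), with its one entry equal to \(1\) in position \((s,\pi_Q(s))\). Hence \(\Pi_Q\ge0\) entrywise and \(\Pi_Q\mathbf 1=\mathbf 1\), using \((\mathbf 1_{|{\cal A}|}\otimes\mathbf 1_{|{\cal S}|})=\mathbf 1\).

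It follows that \(P\Pi_Q\) is entrywise nonnegative and satisfies \(P\Pi_Q\mathbf 1=P\mathbf 1=\mathbf 1\). Because the entries are nonnegative, absolute values can be dropped, and each row sum of \(|\gamma P\Pi_Q|\) equals \(\gamma\). Therefore
\[
\|A_Q\|_\infty=\max_i\sum_j|(A_Q)_{ij}|=\gamma\max_i (P\Pi_Q\mathbf 1)_i=\gamma .
\]

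There is no real obstacle in this argument. The only point needing care is the Kronecker bookkeeping: one has to check that the vectorization ordering of \(Q\) (action blocks stacked, states inside each block) matches the index \(e_a\otimes e_s\) used in \(\Pi_Q\), so that \(\Pi_Q\) really is a 0/1 selection matrix with exactly one unit entry per row. Once that is confirmed, the claimed equality, and not only an upper bound, follows because every row attains the value \(\gamma\).
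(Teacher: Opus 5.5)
Your proposal is correct and takes the same approach as the paper: $P\Pi_Q$ is entrywise nonnegative and row-stochastic, so every absolute row sum of $\gamma P\Pi_Q$ equals exactly $\gamma$. The paper states this in one line, and you have filled in the details. The Kronecker-ordering check is not actually needed for row-stochasticity, since each row $e_{\pi_Q(s)}^\top\otimes e_s^\top$ of $\Pi_Q$ is a standard basis row vector whatever the vectorization order.
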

\begin{proof}
Note that $\sum_j|[A_Q]_{ij}|=\sum_j {| [\gamma P\Pi _Q ]_{ij}|}= \gamma$, which completes the proof.
\end{proof}

This bound also identifies the exact JSR of the linear part of the full switching family.
\begin{lemma}\label{lem:full-jsr-gamma}
The JSR of the linear part \(\{A_i\}_{i=1}^M\) of the affine switching representation~\cref{eq:qvi-switched-affine-error} is $\gamma$.
\end{lemma}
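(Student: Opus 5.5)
The plan is to prove the two inequalities $\rho(A_1,\ldots,A_M)\le\gamma$ and $\rho(A_1,\ldots,A_M)\ge\gamma$ separately, using the norm-independence of the JSR in \cref{def:joint-spectral-radius}.

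\emph{Upper bound.} Work with the induced matrix norm $\|\cdot\|_\infty$, which is submultiplicative. By \cref{lem:maxnorm-AQ}, every member of the family satisfies $\|A_i\|_\infty=\gamma$, because each $A_i=\gamma P\Pi^{\pi_i}$ is $\gamma$ times a row-stochastic matrix. Hence for every switching sequence $\bar\sigma_k$,
\[
\|A_{\sigma_k}\cdots A_{\sigma_1}\|_\infty\le\prod_{j=1}^k\|A_{\sigma_j}\|_\infty=\gamma^k .
\]
Taking the maximum over sequences, the $k$-th root, and the limit gives $\rho\le\gamma$.

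\emph{Lower bound.} I expect this to be the only point that needs a small observation, and the observation is that the all-ones vector is a common eigenvector. Every $P\Pi^{\pi}$ is row-stochastic: each row of $\Pi^\pi$ is a one-hot (or probability) vector over the state-action entries of one state, and each row of $P$ is a probability distribution over next states. Therefore $P\Pi^\pi\mathbf 1=\mathbf 1$, and so $A_i\mathbf 1=\gamma\mathbf 1$ for all $i$. Iterating, any product satisfies $A_{\sigma_k}\cdots A_{\sigma_1}\mathbf 1=\gamma^k\mathbf 1$. This gives
\[
\|A_{\sigma_k}\cdots A_{\sigma_1}\|_\infty\ge \frac{\|\gamma^k\mathbf 1\|_\infty}{\|\mathbf 1\|_\infty}=\gamma^k,
\]
so the maximum over sequences is at least $\gamma^k$ and $\rho\ge\gamma$.

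Combining the two bounds yields that the JSR equals $\gamma$. Equivalently, for the lower bound one can note that $\rho\ge\rho(A_i)\ge\gamma$ because $\gamma$ is an eigenvalue of each $A_i$. For the upper bound one can also note that in fact every product has $\|\cdot\|_\infty$ norm exactly $\gamma^k$, since a product of row-stochastic matrices is row-stochastic.
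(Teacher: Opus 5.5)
Your proof is correct and takes essentially the paper's approach: the paper observes that every product $A_{\sigma_k}\cdots A_{\sigma_1}$ is $\gamma^k$ times a row-stochastic matrix, so its $\|\cdot\|_\infty$ norm is exactly $\gamma^k$, which is your closing remark. Your split into a submultiplicativity upper bound and a common-eigenvector ($\mathbf 1$) lower bound just spells out the two halves of that equality.
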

\begin{proof}
Since each $P\Pi^\pi$ is row-stochastic, for every switching sequence $\bar\sigma_k=(\sigma_1,\ldots,\sigma_k)$, the matrix
\[
A_{\sigma_k}\cdots A_{\sigma_1}
=
\gamma^k (P\Pi^{\pi_{\sigma_k}})\cdots(P\Pi^{\pi_{\sigma_1}})
\]
has infinity norm equal to $\gamma^k$. Hence,
\[
\rho ({A_1}, \cdots ,{A_M})
=
\lim_{k\to\infty}
\max_{\bar\sigma_k\in\{1,\ldots,M\}^k}
\left\|A_{\sigma_k}\cdots A_{\sigma_1}\right\|_\infty^{1/k}
=
\lim_{k\to\infty}(\gamma^k)^{1/k}
=
\gamma.
\]
This completes the proof.
\end{proof}
\setcounter{lemma}{13}

\section{Structural conditions for \texorpdfstring{$\bar\rho<\gamma$}{rho-bar < gamma}}\label{app:strict-jsr-conditions}
The preceding analysis established that the restricted JSR always satisfies
\(\bar\rho\le \gamma\). We now give structural conditions under which the
inequality becomes strict. These conditions are not imposed in the main results,
but they clarify when the projected state-action dynamics possess uniform
mixing in directions transverse to \(\operatorname{span}(\mathbf 1)\). The
analysis is based on standard coefficients of ergodicity, scrambling matrix
arguments, and span-seminorm contractions for products of stochastic
matrices~\cite{dobrushin1956central,hajnal1958weak,wolfowitz1963products,seneta2006non}.

One subtle point is that the exact Q-VI error representation in
\cref{lem:exact-stochastic-representation} is written in terms of stochastic
policies. Nevertheless, the strict inequality \(\bar\rho<\gamma\) can be
characterized using deterministic policy products. The reason is that the
stochastic-policy matrices form the convex hull of the deterministic-policy
matrices, and the JSR is invariant under convexification~\cite{rota1960note,blondel2005computationally}.

We denote the state-action index set in this section by
\[
\mathcal Y:=\mathcal S\times\mathcal A.
\]
For each deterministic policy \(\pi\in\Theta\), define
\[
B_\pi:=P\Pi^\pi,
\]
so that \(A_\pi=\gamma B_\pi\). Then, \(B_\pi\) is row-stochastic on the finite
index set \(\mathcal Y\). For each stochastic policy
\(\mu:\mathcal S\to\Delta_{|\mathcal A|}\), define similarly
\[
B_\mu:=P\Pi^\mu.
\]
Then, \(B_\mu\) is also row-stochastic. Moreover, every stochastic-policy matrix
is a convex combination of deterministic-policy matrices:
\[
B_\mu
=
\sum_{\pi\in\Theta}c_\pi(\mu)B_\pi,
\qquad
c_\pi(\mu):=\prod_{s\in\mathcal S}\mu(\pi(s)|s),
\]
where $c_\pi(\mu)\ge 0$ and $\sum_{\pi\in\Theta}c_\pi(\mu)=1$. This proves
\[
\{B_\mu:\mu:\mathcal S\to\Delta_{|\mathcal A|}\}
\subseteq
\operatorname{co}\{B_\pi:\pi\in\Theta\}.
\]
The reverse inclusion also holds. Indeed, let
\(\sum_{\pi\in\Theta}c_\pi B_\pi\) be an arbitrary convex combination, with
\(c_\pi\ge0\) and \(\sum_{\pi\in\Theta}c_\pi=1\). Define a stochastic policy
\(\mu_c:\mathcal S\to\Delta_{|\mathcal A|}\) by
\[
\mu_c(a|s)
:=
\sum_{\pi\in\Theta:\,\pi(s)=a}c_\pi,
\qquad s\in\mathcal S,
\quad a\in\mathcal A .
\]
Then, row by row in the definition of the action transition matrix,
\[
\Pi^{\mu_c}
=
\sum_{\pi\in\Theta}c_\pi\Pi^\pi.
\]
Consequently,
\[
B_{\mu_c}
=
P\Pi^{\mu_c}
=
\sum_{\pi\in\Theta}c_\pi P\Pi^\pi
=
\sum_{\pi\in\Theta}c_\pi B_\pi .
\]
Therefore,
\[
\{B_\mu:\mu:\mathcal S\to\Delta_{|\mathcal A|}\}
=
\operatorname{co}\{B_\pi:\pi\in\Theta\}.
\]
Let
\[
\widehat B_\pi:=\proj B_\pi\proj,
\qquad
\bar B_\mu:=\proj B_\mu\proj .
\]
Then, it follows that
\[
\bar B_\mu
=
\sum_{\pi\in\Theta}c_\pi(\mu)\bar B_\pi,
\]
and by the convex-hull invariance of the JSR,
\[
\rho(\bar B_\mu:\mu:\mathcal S\to\Delta_{|\mathcal A|})
=
\rho(\bar B_\pi:\pi\in\Theta).
\]
Since
\[
\bar A_\pi=\proj A_\pi\proj=\gamma\proj B_\pi\proj=\gamma\bar B_\pi,
\]
we have
\[
\bar\rho
=
\rho(\bar A_\pi:\pi\in\Theta)
=
\gamma\rho(\bar B_\pi:\pi\in\Theta).
\]

For a deterministic policy sequence
\[
\omega=(\pi_0,\pi_1,\ldots,\pi_{L-1})\in\Theta^L,
\]
define the corresponding product
\[
B_\omega:=B_{\pi_{L-1}}\cdots B_{\pi_1}B_{\pi_0}.
\]
Equivalently, the directed graph of \(B_\pi\) has an edge
\[
(s,a)\longrightarrow_\pi (s',a')
\]
if and only if \(P(s'|s,a)>0\) and \(a'=\pi(s')\). For a policy sequence
\(\omega\) and \(x\in\mathcal Y\), let
\[
\mathcal R_\omega(x)
:=
\{y\in\mathcal Y:(B_\omega)_{xy}>0\}
\]
be the set of state-action nodes reachable from \(x\) after following the
policy sequence \(\omega\).

\begin{definition}[Scrambling matrix~\cite{hajnal1958weak,wolfowitz1963products,seneta2006non}]
\label{def:scrambling-matrix}
A row-stochastic matrix \(B\in\mathbb R^{|\mathcal Y|\times|\mathcal Y|}\) is
called scrambling if every pair of rows has at least one common positive
column. Equivalently, for every \(x,y\in\mathcal Y\), there exists
\(\upsilon\in\mathcal Y\) such that
\[
B_{x\upsilon}>0
\qquad\text{and}\qquad
B_{y\upsilon}>0 .
\]
For a product \(B_\omega\), this is equivalent to
\[
\mathcal R_\omega(x)\cap\mathcal R_\omega(y)\neq\varnothing,
\qquad
\forall x,y\in\mathcal Y .
\]
\end{definition}

\begin{definition}[Dobrushin coefficient~\cite{dobrushin1956central,gaubert2015dobrushin}]
\label{def:dobrushin-coefficient}
For a row-stochastic matrix \(B\in\mathbb R^{|\mathcal Y|\times|\mathcal Y|}\),
its Dobrushin coefficient is defined by
\[
\tau_{\rm D}(B)
:=
\frac12
\max_{x,y\in\mathcal Y}
\sum_{\upsilon\in\mathcal Y}
\left|B_{x\upsilon}-B_{y\upsilon}\right|.
\]
Equivalently,
\[
\tau_{\rm D}(B)
=
1-
\min_{x,y\in\mathcal Y}
\sum_{\upsilon\in\mathcal Y}
\min\{B_{x\upsilon},B_{y\upsilon}\}.
\]
\end{definition}

\begin{definition}[Diameter seminorm~\cite{gaubert2015dobrushin,tsitsiklis1986distributed}]
\label{def:Diameter-seminorm}
For a vector \(v\in\mathbb R^{|\mathcal Y|}\), the diameter seminorm is defined as
\[
\|v\|_{\rm dm}
:=
\max_{x\in\mathcal Y}v_x
-
\min_{x\in\mathcal Y}v_x .
\]
It is a seminorm on \(\mathbb R^{|\mathcal Y|}\), and it becomes a norm on the
quotient space
\(\mathbb R^{|\mathcal Y|}/\operatorname{span}(\mathbf 1)\), since
\(\|v\|_{\rm dm}=0\) if and only if \(v\in\operatorname{span}(\mathbf 1)\).
\end{definition}

We use the following standard properties of the Dobrushin coefficient and the
diameter seminorm.
\setcounter{lemma}{13}
\begin{lemma}[Dobrushin coefficient and diameter-seminorm contraction~\cite{dobrushin1956central,hajnal1958weak,seneta2006non}]
\label{lem:dobrushin-Diameter-properties}
Let \(B\) and \(C\) be row-stochastic matrices of size
\(|\mathcal Y|\times|\mathcal Y|\). Then the following statements hold:
\begin{enumerate}
    \item \(0\le \tau_{\rm D}(B)\le 1\).
    \item \(\tau_{\rm D}(B)<1\) if and only if \(B\) is scrambling.
    \item \(\tau_{\rm D}(BC)\le \tau_{\rm D}(B)\tau_{\rm D}(C)\).
    \item If \(B_i\) are row-stochastic and \(\alpha_i\ge 0\),
    \(\sum_i\alpha_i=1\), then
    \[
    \tau_{\rm D}\!\left(\sum_i\alpha_iB_i\right)
    \le
    \sum_i\alpha_i\tau_{\rm D}(B_i).
    \]
    In particular, \(\tau_{\rm D}\) is convex on the set of row-stochastic
    matrices.
    \item The Dobrushin coefficient is the induced operator norm of \(B\) on
    the quotient space
    \(\mathbb R^{|\mathcal Y|}/\operatorname{span}(\mathbf 1)\) equipped with
    the diameter seminorm:
    \[
    \tau_{\rm D}(B)
    =
    \sup_{v\notin\operatorname{span}(\mathbf 1)}
    \frac{\|Bv\|_{\rm dm}}{\|v\|_{\rm dm}}.
    \]
    Consequently, for every \(v\in\mathbb R^{|\mathcal Y|}\),
    \[
    \|Bv\|_{\rm dm}\le \tau_{\rm D}(B)\|v\|_{\rm dm}.
    \]
    \item For every \(v\in\mathbb R^{|\mathcal Y|}\) and every \(c\in\mathbb R\),
    \[
    \|v+c\mathbf 1\|_{\rm dm}=\|v\|_{\rm dm}.
    \]
    Consequently, if \(\proj=I-\frac{1}{|\mathcal Y|}\mathbf 1\mathbf 1^\top\),
    then
    \[
    \sup_{v\notin\operatorname{span}(\mathbf 1)}
    \frac{\|\proj B\proj v\|_{\rm dm}}{\|v\|_{\rm dm}}
    =
    \tau_{\rm D}(B).
    \]
\end{enumerate}
\end{lemma}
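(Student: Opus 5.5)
We prove the six items in order, working throughout with the total-variation form of \(\tau_{\rm D}\). For each pair of rows \(x,y\) write \(d_{xy}(B):=\frac12\sum_\upsilon|B_{x\upsilon}-B_{y\upsilon}|\). Because both rows are probability vectors, \(|p-q|=p+q-2\min\{p,q\}\) gives \(d_{xy}(B)=1-\sum_\upsilon\min\{B_{x\upsilon},B_{y\upsilon}\}\), which is the stated equivalence of the two formulas.

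\emph{Items 1 and 2.} Item~1 follows at once, since \(0\le\sum_\upsilon\min\{B_{x\upsilon},B_{y\upsilon}\}\le 1\). For item~2, note that \(\tau_{\rm D}(B)<1\) holds iff every pair of rows has \(\sum_\upsilon\min\{B_{x\upsilon},B_{y\upsilon}\}>0\), i.e.\ a common positive column. Here the maximum over the finite set of pairs is used.

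\emph{Item 5 (the core step).} First show \(\|Bv\|_{\rm dm}\le\tau_{\rm D}(B)\|v\|_{\rm dm}\). Fix rows \(x,y\) and set \(w=B_{x\cdot}-B_{y\cdot}\), so \(\sum_\upsilon w_\upsilon=0\). Split \(w=w^+-w^-\); then \(\sum w^+=\sum w^-=d_{xy}(B)\), and
\[
(Bv)_x-(Bv)_y=\sum_\upsilon w^+_\upsilon v_\upsilon-\sum_\upsilon w^-_\upsilon v_\upsilon\le d_{xy}(B)\bigl(\max v-\min v\bigr).
\]
Maximizing over \(x,y\) gives the inequality. For equality of the supremum, take the maximizing pair \(x,y\) and \(v=\mathbf 1_{\{\upsilon:B_{x\upsilon}>B_{y\upsilon}\}}\). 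Then \((Bv)_x-(Bv)_y=d_{xy}(B)=\tau_{\rm D}(B)\) while \(\|v\|_{\rm dm}\le 1\); the degenerate case \(\tau_{\rm D}(B)=0\) is trivial.

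\emph{Item 3.} With the operator-norm identity of item~5, submultiplicativity \(\tau_{\rm D}(BC)\le\tau_{\rm D}(B)\tau_{\rm D}(C)\) is immediate. Since \(C\mathbf 1=\mathbf 1\), \(C\) maps \(\operatorname{span}(\mathbf 1)\) to itself, so it induces a well-defined map on the quotient. If \(Cv\in\operatorname{span}(\mathbf 1)\), both sides of the composed bound are handled directly.

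\emph{Item 4.} Convexity follows by the triangle inequality applied to each row pair:
\[
\tfrac12\sum_\upsilon\Bigl|\sum_i\alpha_i\bigl((B_i)_{x\upsilon}-(B_i)_{y\upsilon}\bigr)\Bigr|\le\sum_i\alpha_i\,d_{xy}(B_i)\le\sum_i\alpha_i\,\tau_{\rm D}(B_i).
\]
Then maximize over \(x,y\).

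\emph{Item 6.} Shift invariance holds because max and min both shift by \(c\). Since \(\proj v=v-\bar v\mathbf 1\), where \(\bar v\) is the mean, we get \(\|\proj v\|_{\rm dm}=\|v\|_{\rm dm}\). Also \(B\proj v=Bv-\bar v\mathbf 1\) because \(B\mathbf 1=\mathbf 1\). Therefore \(\|\proj B\proj v\|_{\rm dm}=\|Bv\|_{\rm dm}\), and the supremum equals \(\tau_{\rm D}(B)\) by item~5.

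The main obstacle is item~5, and specifically the attainment of the supremum. The positive/negative-part split gives the upper bound, and the extremal indicator vector gives the matching lower bound.
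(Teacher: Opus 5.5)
Your proof is correct, and it is more self-contained than the paper's. The paper treats items 1, 2, 3 and 5 as standard facts about coefficients of ergodicity and simply cites Dobrushin, Hajnal and Seneta. It argues only two items. Item 4 is justified because $\tau_{\rm D}$ is a maximum of total-variation distances between rows, which is exactly your triangle-inequality computation. Item 6 follows from shift invariance of $\|\cdot\|_{\rm dm}$ plus $B\mathbf 1=\mathbf 1$, which is again your argument. Your identity $B\proj v=Bv-\bar v\mathbf 1$, together with $\|\proj v\|_{\rm dm}=\|v\|_{\rm dm}$, states more precisely what the paper's one-line ``the projection only removes or adds components in $\operatorname{span}(\mathbf 1)$'' needs.

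For the cited items you give the textbook proofs:
\begin{itemize}
\item the identity $|p-q|=p+q-2\min\{p,q\}$, for the equivalence of the two formulas and for items 1--2;
\item the positive/negative-part split, for the contraction bound;
\item the indicator of $\{\upsilon:B_{x\upsilon}>B_{y\upsilon}\}$, for attainment;
\item submultiplicativity derived from the induced-norm identity, by proving item 5 before item 3.
\end{itemize}
The paper's route buys brevity. Yours makes the lemma independent of the references. It also shows that item 3 is nothing more than submultiplicativity of an induced norm on $\mathbb R^{|\mathcal Y|}/\operatorname{span}(\mathbf 1)$, which is precisely how the lemma is used later in the uniform-scrambling characterization.

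Two minor points of polish. In item 5, note explicitly that when $\tau_{\rm D}(B)>0$ your indicator vector is nonconstant, since $Bv$ is nonconstant and $B$ maps constants to constants. It is therefore an admissible test vector with $\|v\|_{\rm dm}=1$. In item 3, the case $Cv\in\operatorname{span}(\mathbf 1)$ needs no separate treatment, because your bound $\|Bu\|_{\rm dm}\le\tau_{\rm D}(B)\|u\|_{\rm dm}$ was proved for every $u$, constants included.
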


\noindent\emph{Proof.} See Appendix~\ref{app:proof-lem:dobrushin-Diameter-properties}.

We can now state an if-and-only-if characterization of strictness for
the full restricted switching family.
\setcounter{proposition}{2}
\begin{proposition}[Characterization of \(\bar\rho<\gamma\) by uniform scrambling]
\label{prop:uniform-scrambling-strict}
The following statements are equivalent:
\begin{enumerate}
    \item \(\bar\rho<\gamma\).
    \item There exists an integer \(L\ge 1\) such that, for every deterministic
    policy sequence \(\omega=(\pi_0,\ldots,\pi_{L-1})\in\Theta^L\), the product
    \(B_\omega\) is scrambling.
    \item There exists an integer \(L\ge 1\) such that
    \[
    \max_{\omega\in\Theta^L}\tau_{\rm D}(B_\omega)<1.
    \]
    \item Equivalently, there exists an integer \(L\ge 1\) such that
    \[
    \sup_{\mu_0,\ldots,\mu_{L-1}}
    \tau_{\rm D}
    \left(
    B_{\mu_{L-1}}\cdots B_{\mu_1}B_{\mu_0}
    \right)
    <1,
    \]
    where the supremum is taken over all stochastic policies
    \(\mu_0,\ldots,\mu_{L-1}:\mathcal S\to\Delta_{|\mathcal A|}\).
\end{enumerate}
Moreover, if these equivalent conditions hold and
\[
\eta_L
:=
\min_{\omega\in\Theta^L}
\min_{x,y\in\mathcal Y}
\sum_{\upsilon\in\mathcal Y}
\min\{(B_\omega)_{x\upsilon},(B_\omega)_{y\upsilon}\},
\]
then \(\eta_L>0\) and
\[
\bar\rho
\le
\gamma(1-\eta_L)^{1/L}
<
\gamma.
\]
\end{proposition}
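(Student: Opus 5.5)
The plan is to reduce everything to the stochastic family $\{B_\pi\}$ using the identity $\bar\rho=\gamma\,\rho(\bar B_\pi:\pi\in\Theta)$ established just before the statement, and then to read off the JSR of the projected family from Dobrushin coefficients via \cref{lem:dobrushin-Diameter-properties}.

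\textbf{Product identity.} Since $B\mathbf 1=\mathbf 1$ for every row-stochastic $B$, we have $B\proj=B-\frac1n\mathbf 1\mathbf 1^\top$. Because $\proj\mathbf 1=0$, this gives $\proj B\proj=\proj B$. By induction, for every $\omega\in\Theta^L$,
\[
\bar B_{\pi_{L-1}}\cdots\bar B_{\pi_0}=\proj B_\omega\proj=\proj B_\omega .
\]
Every $\bar B_\pi$ annihilates $\mathbf 1$ and maps $W:=\operatorname{span}(\mathbf 1)^\perp$ into itself. Hence the JSR of the family equals the JSR of its restriction to $W$, because on $\operatorname{span}(\mathbf 1)$ all products vanish. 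On $W$ the diameter seminorm $\|\cdot\|_{\rm dm}$ is a genuine norm (\cref{def:Diameter-seminorm}). Its induced operator norm on $W$ is submultiplicative, and by item 6 of \cref{lem:dobrushin-Diameter-properties} together with the product identity,
\[
\|\proj B_\omega\proj\|_{{\rm dm},W}=\tau_{\rm D}(B_\omega).
\]

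\textbf{Equivalences.} I would prove them in the following order.
\begin{itemize}
\item (2)$\Leftrightarrow$(3) follows from item 2 of \cref{lem:dobrushin-Diameter-properties}, since $\Theta^L$ is finite and so the maximum is attained.
\item (3)$\Leftrightarrow$(4): the inequality $\ge$ is trivial because deterministic policies are special stochastic ones. For $\le$, write each $B_{\mu_j}=\sum_\pi c_\pi(\mu_j)B_\pi$. Expanding the product multilinearly expresses $B_{\mu_{L-1}}\cdots B_{\mu_0}$ as a convex combination of the $B_\omega$, $\omega\in\Theta^L$, with weights $\prod_j c_{\pi_j}(\mu_j)$. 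Convexity (item 4) then bounds its Dobrushin coefficient by $\max_\omega\tau_{\rm D}(B_\omega)$.
\item (3)$\Rightarrow$(1), together with the quantitative bound: by \cref{def:dobrushin-coefficient}, $\max_{\omega\in\Theta^L}\tau_{\rm D}(B_\omega)=1-\eta_L$, so (3) gives $\eta_L>0$. Write a product of length $kL$ as a product of $k$ blocks of length $L$. Submultiplicativity of the induced dm-norm on $W$ and the product identity give a bound of $(1-\eta_L)^k$ on the whole product. A leftover product of fewer than $L$ factors has norm bounded by a constant, using item 1 ($\tau_{\rm D}\le1$). Taking $k$-th roots in \cref{def:joint-spectral-radius} (any norm on the finite-dimensional space $W$ works) yields $\rho(\bar B_\pi)\le(1-\eta_L)^{1/L}$. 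Multiplying by $\gamma$ gives $\bar\rho\le\gamma(1-\eta_L)^{1/L}<\gamma$.
\item (1)$\Rightarrow$(3): if $\bar\rho<\gamma$ then $\rho(\bar B_\pi)<1$. By the limit definition of the JSR applied with the dm-induced norm on $W$, $\max_{\omega\in\Theta^L}\|\proj B_\omega\proj\|_{{\rm dm},W}^{1/L}\to\rho(\bar B_\pi)<1$. So for some finite $L$ this maximum is below $1$, and by item 6 it equals $\max_\omega\tau_{\rm D}(B_\omega)$, which is (3).
\end{itemize}

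\textbf{Main obstacle.} The delicate point is the bookkeeping that lets a seminorm stand in for a norm in the JSR. One must check that item 6 really computes the operator norm of $\proj B_\omega\proj$ restricted to $W$. One must also check that the JSR of the family on the full space $\mathbb R^{n}$ equals its JSR on $W$. The first check uses $\|v+c\mathbf 1\|_{\rm dm}=\|v\|_{\rm dm}$, so $\sup_{v\notin\operatorname{span}(\mathbf 1)}$ equals the supremum over $v\in W\setminus\{0\}$. The second follows from the direct-sum decomposition $\mathbb R^{n}=\operatorname{span}(\mathbf 1)\oplus W$, invariant under all $\bar B_\pi$, with the family identically zero on the first summand. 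Once these two points are verified, the remaining steps are routine.
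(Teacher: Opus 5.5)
Your proposal is correct and follows the paper's proof essentially step for step: restrict the projected family to $W=\operatorname{span}(\mathbf 1)^\perp$, where the diameter seminorm is a norm; identify the induced operator norm of each length-$L$ product with $\tau_{\rm D}(B_\omega)$; then obtain (1)$\Leftrightarrow$(3) from the JSR limit and submultiplicativity, (2)$\Leftrightarrow$(3) from the scrambling criterion, and (3)$\Leftrightarrow$(4) from convexity of $\tau_{\rm D}$ under the multilinear expansion. Your explicit identity $\proj B\proj=\proj B$ and your bound on the leftover short products are only minor refinements of the same argument.
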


\noindent\emph{Proof.} See Appendix~\ref{app:proof-prop:uniform-scrambling-strict}.

\cref{prop:uniform-scrambling-strict} shows that the bounded scrambling
condition is a necessary and sufficient characterization of \(\bar\rho<\gamma\) for the full restricted switching
family. It can be stronger
than what is necessary for a particular Q-VI trajectory, since not every
admissible stochastic switching sequence must be realized by the Bellman
maximization dynamics along a given initialization.

A simple sufficient condition for the equivalent properties in~\cref{prop:uniform-scrambling-strict} is a one-step common descendant. Suppose
that there exists a state \(s'\in\mathcal S\) such that
\[
p_{\min}:=\min_{s\in\mathcal S,\ a\in\mathcal A}P(s'|s,a)>0.
\]
Then, for every deterministic policy \(\pi\), all rows of \(B_\pi\) assign at
least \(p_{\min}\) mass to the common state-action column
\((s',\pi(s'))\). Hence, \(B_\pi\) is one-step scrambling and
\[
\bar\rho\le \gamma(1-p_{\min})<\gamma.
\]
More generally, if there exist \(\varepsilon_{\rm D}>0\) and a probability
distribution \(\nu\in\Delta_{|\mathcal S|}\) such that the Doeblin-type
minorization condition
\[
P(\cdot|s,a)\ge \varepsilon_{\rm D}\nu(\cdot),
\qquad
\forall (s,a)\in\mathcal S\times\mathcal A,
\]
holds componentwise, then every \(B_\pi\) has a common stochastic component of
mass at least \(\varepsilon_{\rm D}\). More precisely, for each fixed
deterministic policy \(\pi\), each row of \(B_\pi\) dominates the same
probability measure on \(\mathcal Y\) that assigns mass \(\nu(s')\) to
\((s',\pi(s'))\). This is a standard uniform mixing condition for finite
Markov chains~\cite{seneta2006non}. Consequently,
\[
\bar\rho\le \gamma(1-\varepsilon_{\rm D})<\gamma.
\]
These one-step conditions are strong but easy to check. They are sufficient, not
necessary: \(\bar\rho<\gamma\) may still hold when row overlap appears only
after several steps, as captured by the bounded scrambling condition in
\cref{prop:uniform-scrambling-strict}.

The same characterization also shows what can prevent strictness. For a
fixed deterministic policy \(\pi\), define the induced state transition matrix
\[
P_\pi:=\Pi^\pi P\in\mathbb R^{|\mathcal S|\times|\mathcal S|}.
\]
The nonzero eigenvalues of \(B_\pi=P\Pi^\pi\) coincide with those of \(P_\pi\).
Consequently, if some deterministic policy \(\pi\) has $|\lambda_2(P_\pi)|=1$, then, by~\cref{lem:restricted-spectral-radius}, $\rho(\bar A_\pi)=\gamma$.
Since \(\bar A_\pi\) is one member of the full restricted family, it follows that $\bar\rho\ge \rho(\bar A_\pi)=\gamma$. Together with \cref{lem:restricted-jsr-upper-gamma}, this gives $\bar\rho=\gamma$.
Therefore, strict inequality for the full family is impossible if even one
deterministic policy induces a state Markov chain with a nontrivial
unit-modulus mode, where a nontrivial unit-modulus mode means an eigenvalue
\(\lambda\) of \(P_\pi\) with \(|\lambda|=1\), other than the Perron eigenvalue
associated with the invariant constant direction. Such a mode prevents
contraction on the quotient space obtained after removing
\(\operatorname{span}(\mathbf 1)\)~\cite{seneta2006non,levin2017markov}.

Standard Markov-chain theory shows that this obstruction occurs, for example,
when \(P_\pi\) has more than one closed communicating class, meaning a
communicating class with no transition to states outside the class, or when a
closed communicating class is periodic, meaning that the greatest common
divisor of its possible return times is larger than one~\cite{seneta2006non,levin2017markov}.
Therefore, a necessary structural requirement for the full inequality
\(\bar\rho<\gamma\) is that every deterministic stationary policy be aperiodic
and unichain, where unichain means that there is exactly one closed
communicating class, possibly together with transient states. This
single-policy requirement alone, however, is not sufficient for arbitrary
switching; the precise full-family condition is the bounded scrambling
condition in \cref{prop:uniform-scrambling-strict}.

This full-family condition should be distinguished from the
post-identification condition in \cref{thm:two-stage-convergence}. After the
iterate enters the POSS, only the optimal restricted family $\bar{\mathcal H}_*
=
\{\bar A_\pi:\pi\in\Theta^*\}$ remains active. Hence, one only needs bounded scrambling over optimal policies to
obtain \(\bar\rho_*<\gamma\). In the unique-optimal-policy case, this reduces
to the single-chain condition
\[
|\lambda_2(P_{\pi^*})|<1,
\]
or equivalently, in standard finite-state Markov-chain terminology, that the
optimal-policy chain has one aperiodic closed communicating class, possibly
with transient states. This recovers the spectral statement in
\cref{cor:unique-optimal-policy} and gives a graph-theoretic interpretation of
when the post-identification transverse decay is strictly faster than the
classical \(\gamma\)-rate.

\section{Examples}\label{app:examples}
This subsection illustrates the geometric picture developed above
using a small discounted MDP with state set $\mathcal S=\{1,2,3\}$, action set $\mathcal A=\{1,2\}$, and discount factor $\gamma=0.95$. The transition probability matrices and the expected one-step rewards are chosen as
\[
P_1=
\begin{bmatrix}
0.7 & 0.2 & 0.1\\
0.2 & 0.6 & 0.2\\
0.1 & 0.3 & 0.6
\end{bmatrix},
\qquad
P_2=
\begin{bmatrix}
0.2 & 0.5 & 0.3\\
0.4 & 0.3 & 0.3\\
0.3 & 0.3 & 0.4
\end{bmatrix},
\qquad
R=
\begin{bmatrix}
1.0 & 0.2\\
0.6 & 0.0\\
1.2 & 0.3
\end{bmatrix},
\]
respectively. Here $P_a=P(\cdot\mid\cdot,a)$ for $a\in\mathcal A=\{1,2\}$, and the $(s,a)$-entry of $R$ represents the expected reward at state $s\in\mathcal S=\{1,2,3\}$ under action $a\in\mathcal A$, i.e. $R(s,a)$. For this example, the unique optimal deterministic policy is
\[
\pi^*=\bigl(\pi^*(1),\pi^*(2),\pi^*(3)\bigr)=(1,1,1),
\]
and the corresponding optimal Q-function is
\[
Q^*=
\begin{bmatrix}
18.2229 & 17.3026\\
17.6194 & 17.2172\\
18.4947 & 17.5430
\end{bmatrix},
\]
where the $(s,a)$-entry represents the optimal Q-function value at state $s$ under action $a$, i.e. $Q^*(s,a)$.
The corresponding minimum optimality gap is
\[
\Delta
:=
\min_{s\in {\cal S}_{\mathrm{sep}}}\bar{\Delta}_s,
\qquad
\bar{\Delta}_s
:=
V^*(s)-\max_{a\notin \Phi^*(s)}Q^*(s,a)
\approx 0.4022.
\]
We consider the affine space $\mathcal X_1 = Q^*+\operatorname{span}(\mathbf{1})$,
and choose the tube radius $\delta = 0.4\Delta \approx 0.1609$, which satisfies \(\delta<\Delta/2\).
To compare the convergence to the optimal Q-function and the convergence to the affine set
\(\mathcal X_1\), we plot the normalized quantities
\[
\frac{\|Q_k-Q^*\|_\infty}{\|Q_0-Q^*\|_\infty}
\qquad\text{and}\qquad
\frac{\dist_2(Q_k,\mathcal X_1)}{\dist_2(Q_0,\mathcal X_1)}.
\]
As reference curves, we also include $\gamma^k$ and $(\gamma|\lambda_2|)^k$, where \(|\lambda_2|\) denotes the modulus of the second largest eigenvalue of the state-action transition matrix \(P\Pi_{\pi^*}\) associated with the optimal policy. For this example, $|\lambda_2| \approx 0.5618$ and $\gamma|\lambda_2| \approx 0.5337$.

To visualize the geometry in two dimensions, we consider the two-dimensional affine plane
\[
Q^*+\operatorname{span}\{\hat {\mathbf{1}},\hat d\},
\]
where
\[
\hat {\mathbf{1}}:=\frac{1}{\sqrt{6}}(1,1,1,1,1,1)^\top \in {\mathbb R}^{6},
\qquad
\hat d:=\frac{1}{\sqrt{2}}(1,0,0,-1,0,0)^\top \in {\mathbb R}^{6}.
\]
Here, \(\hat {\mathbf{1}}\) is the normalized all-ones direction, which is tangent to \(\mathcal X_1\),
while \(\hat d\) is a transverse direction that perturbs the action contrast at the first state.
For any \(Q\), write the coordinates of \(Q-Q^*\) on this plane as
\[
u(Q):=\langle Q-Q^*,\hat {\mathbf{1}}\rangle,
\qquad
v(Q):=\langle Q-Q^*,\hat d\rangle.
\]
For the single-trajectory experiment, the initial condition is chosen on this tilted plane as follows:
\[
Q_0=
\begin{bmatrix}
19.5495 & 16.6292\\
17.9460 & 17.5438\\
18.8213 & 17.8696
\end{bmatrix}.
\]
For this particular \(Q_0\), the tie-broken greedy action is already the optimal action at every state. Thus, this single trajectory is used to visualize finite-time entrance into the invariant tube, rather than to demonstrate first-time policy identification from a non-optimal greedy policy. Starting from this \(Q_0\), we run Q-VI for \(50\) iterations. \Cref{fig:vi-normalized} shows the normalized decay of \(\|Q_k-Q^*\|_\infty\) and \(\dist_2(Q_k,\mathcal X_1)\).
The plot shows that the distance to \(\mathcal X_1\) decreases more rapidly than the full
Q-function error in the transient regime, in line with the theoretical picture
developed in this paper.
\Cref{fig:vi-projection-single} shows the orthogonal projection of the full Q-VI
trajectory onto the tilted plane. The dashed line represents the slice of \(\mathcal X_1\) and the shaded strip is the
slice of \({\cal T}_\delta\). The projected trajectory starts from the initial point, enters the strip in finite time, and then converges to the origin, which corresponds to \(Q_k\to Q^*\).

To show the global geometry more clearly, we also consider \(12\) initial conditions placed
uniformly on the circle of radius \(2\) in the tilted plane.
For each such initial condition, we run Q-VI for \(50\) iterations and project the
resulting trajectory onto the same tilted plane.
\Cref{fig:vi-projection-multiple} displays these projected trajectories together with the slice
of \(\mathcal X_1\), the slice of \({\cal T}_\delta\), and the initial circle.
This figure provides a global view of trajectories from different directions being
rapidly drawn toward the affine set \(\mathcal X_1\) before ultimately converging to \(Q^*\).

\begin{figure}[t]
\centering
\includegraphics[width=0.86\linewidth]{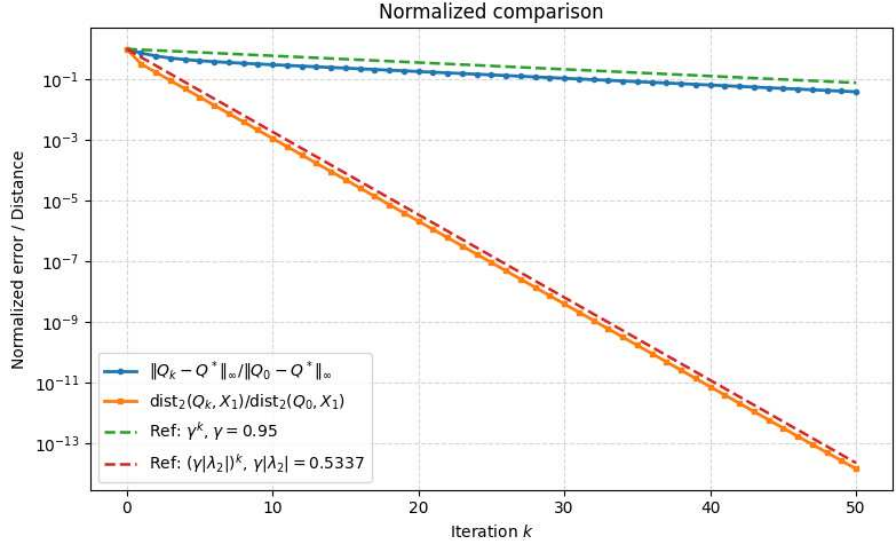}
\caption{Normalized comparison of
\(
\|Q_k-Q^*\|_\infty
\)
and
\(
\dist_2(Q_k,\mathcal X_1)
\)
for the toy MDP. The dashed curves indicate the reference rates
\(
\gamma^k
\)
and
\(
(\gamma|\lambda_2|)^k
\).
The plot shows that the iterate approaches the affine set \(\mathcal X_1\) faster than it
approaches the optimal Q-function \(Q^*\).}
\label{fig:vi-normalized}
\end{figure}
\begin{figure}[t]
\centering
\includegraphics[width=0.72\linewidth]{fig2}
\caption{Orthogonal projection of a single full Q-VI trajectory onto the tilted plane.
The dashed line is the slice of \(\mathcal X_1\) and the shaded strip is the slice of
the tube \({\cal T}_\delta\).
The projected trajectory starts from the initial point, enters the strip, and then converges to the origin.}
\label{fig:vi-projection-single}
\end{figure}
\begin{figure}[t]
\centering
\includegraphics[width=0.72\linewidth]{fig3}
\caption{Orthogonal projections of \(12\) full Q-VI trajectories onto the tilted plane,
where the initial conditions are chosen uniformly on a circle of radius \(2\) in the plane.
The dashed line is the slice of \(\mathcal X_1\), the shaded strip is the slice of \({\cal T}_\delta\), and the
dotted curve is the initial circle.
The plot gives a global geometric view of trajectories being first attracted toward
\(\mathcal X_1\) and then converging to \(Q^*\).}
\label{fig:vi-projection-multiple}
\end{figure}

We next present a tabular Q-learning example on the same toy MDP used in the preceding
Q-VI example. Since the underlying discounted MDP with $\mathcal S=\{1,2,3\}$ and $\mathcal A=\{1,2\}$, the optimal policy $\pi^*=\bigl(\pi^*(1),\pi^*(2),\pi^*(3)\bigr)=(1,1,1)$,
the optimal Q-function \(Q^*\), the affine set \(\mathcal X_1\), and the tube radius \(\delta\) are
the same as those introduced in the previous example, we omit their repeated description here.
We consider the standard asynchronous tabular Q-learning update. In the simulation, the action is sampled from the uniform behavior policy $\mu(a\mid s)=\frac{1}{|{\cal A}|}$ for $s\in {\cal S}$ and $a\in {\cal A}$, and the step size is chosen as $\alpha_t=\frac{0.35}{1+0.01\,t}$. To visualize the geometry, we consider the two-dimensional affine plane identical to the previous Q-VI case.
As before, we choose \(12\) initial conditions uniformly on the boundary of a circle
of radius \(r=2\) in the tilted plane.

\Cref{fig:ql-projection} shows the projected sample paths
corresponding to these \(12\) initial conditions, together with the line \(q=p\),
which is the projection of \(\mathcal X_1\), the strip corresponding to the projected tube
\({\cal T}_\delta\), and the boundary of the initial circle.
Since Q-learning is stochastic, the trajectories are not monotone and exhibit
path-dependent fluctuations. Nevertheless, a clear common trend can be observed:
from a variety of initial directions, the trajectories are first drawn toward the
line \(q=p\), enter the projected tube, and then continue to move toward the origin.
These paths are consistent with the geometric interpretation from the Q-VI example:
even in the stochastic tabular Q-learning setting, the affine set \(\mathcal X_1\) acts as a
transient attracting set before the iterates refine their convergence toward \(Q^*\).

Compared with the Q-VI example, the present figure displays a family of
stochastic sample paths rather than a single deterministic trajectory. This makes the
same geometric mechanism easier to see: although the noise causes trajectory-dependent oscillations, the iterates tend to approach the neighborhood of \(\mathcal X_1\)
relatively quickly and only afterward progress toward the optimal Q-function \(Q^*\).

\begin{figure}[t]
\centering
\includegraphics[width=0.82\linewidth]{fig4}
\caption{Two-dimensional projection of tabular Q-learning trajectories onto the tilted plane
\(Q^*+\operatorname{span}\{\hat{\mathbf 1},\hat d\}\), displayed in the rotated coordinates
\(p=(u-v)/\sqrt{2}\) and \(q=(u+v)/\sqrt{2}\).
The dashed line \(q=p\) is the projection of \(\mathcal X_1\), and the shaded strip is the projection
of the tube \({\cal T}_\delta\), namely \( |q-p|\le 2\delta \).
The dotted circle indicates the boundary of the common initial set in the \((u,v)\)-plane.
Starting from \(12\) different initial points on this circle, the stochastic Q-learning
trajectories tend to enter the strip and then move toward the origin, which corresponds to
\(Q^*\).}
\label{fig:ql-projection}
\end{figure}

\section{Auxiliary statements and proofs}\label{app:moved-main-results}

\subsection{Statement and proof of Lemma~\ref{lem:X1-subset-Xstar}}
\label{app:proof-lem:X1-subset-Xstar}
\setcounter{lemma}{3}
\begin{lemma}\label{lem:X1-subset-Xstar}
We have $\mathcal X_1 \subset \mathcal X^*$.
\end{lemma}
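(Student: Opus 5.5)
The plan is to unfold the definitions and observe that a uniform shift by \(\alpha\mathbf 1\) leaves every statewise comparison of actions unchanged.

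Fix an arbitrary \(Q\in\mathcal X_1\), so that \(Q=Q^*+\alpha\mathbf 1\) for some \(\alpha\in\mathbb R\). Then for every state \(s\in\mathcal S\) and action \(a\in\mathcal A\),
\[
Q(s,a)=(e_a\otimes e_s)^\top(Q^*+\alpha\mathbf 1)=Q^*(s,a)+\alpha.
\]
Two consequences follow at each fixed \(s\):
\[
\max_{a\in\mathcal A}Q(s,a)=V^*(s)+\alpha,
\qquad
\Argmax_{a\in\mathcal A}Q(s,a)=\Argmax_{a\in\mathcal A}Q^*(s,a)=\Phi^*(s).
\]
Indeed, \(Q(s,a)\ge Q(s,a')\) holds if and only if \(Q^*(s,a)\ge Q^*(s,a')\).

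Next we use the fact that the tie-broken maximizer \(\argmax\) always returns an element of the set-valued maximizer \(\Argmax\). Hence \(\pi_Q(s)\in\Argmax_a Q(s,a)=\Phi^*(s)\) for every \(s\). By the definition of \(\mathcal X^*\), this gives \(Q\in\mathcal X^*\), and since \(Q\in\mathcal X_1\) was arbitrary, \(\mathcal X_1\subset\mathcal X^*\).

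There is no real obstacle here. The only point needing care is the tie-breaking rule. We do not need \(\pi_Q=\pi_{Q^*}\), which would also hold for a fixed rule depending only on the ordering. We need only that the selected action is some maximizer, and that is built into the definition of \(\argmax\) as a selection from \(\Argmax\).
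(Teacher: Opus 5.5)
Your proof is correct and takes the same approach as the paper: a shift by \(\alpha\mathbf 1\) leaves each statewise \(\Argmax\) equal to \(\Phi^*(s)\), and the tie-broken greedy action of \(Q\) lies in that set. The paper says this more tersely; your explicit remark that \(\argmax\) is a selection from \(\Argmax\) simply makes its final step precise.
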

\begin{proof}
Let $Q\in \mathcal X_1$. Then $Q=Q^*+\alpha \mathbf{1}$ for some $\alpha\in\mathbb R$. Since adding a constant
multiple of $\mathbf{1}$ does not change the action ordering,
\[
\Argmax_{a\in {\cal A}}Q(s,a)=\Argmax_{a\in {\cal A}}Q^*(s,a)=\Phi^*(s),\qquad \forall s\in {\cal S}.
\]
Hence every tie-broken greedy action of $Q$ belongs to $\Phi^*(s)$, so $Q\in \mathcal X^*$.
This completes the proof.
\end{proof}

\subsection{Statement and proof of Proposition~\ref{prop:X1-invariant}}
\label{app:proof-prop:X1-invariant}
\setcounter{proposition}{0}
\begin{proposition}[Invariance of $\mathcal X_1$]\label{prop:X1-invariant}
If $Q_k\in \mathcal X_1$, then $Q_{k+1}\in \mathcal X_1$. More precisely, if $Q_k=Q^*+\alpha_k \mathbf{1}$ for some $\alpha_k\in\mathbb R$, then $Q_{k+1}=Q^*+\gamma\alpha_k \mathbf{1}$.
\end{proposition}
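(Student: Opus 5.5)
The plan is a direct computation with the Bellman operator in the vector form \(FQ=R+\gamma P\Pi_Q Q\) from \cref{eq:qvi-bellman-vector}, combined with the optimal Bellman equation \(Q^*=R+\gamma P\Pi_{Q^*}Q^*\). Suppose \(Q_k=Q^*+\alpha_k\mathbf 1\). The argument has three steps.

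First, the greedy policy does not move. Adding \(\alpha_k\mathbf 1\) shifts every entry \(Q(s,\cdot)\) by the same constant. Hence \(\Argmax_a Q_k(s,a)=\Argmax_a Q^*(s,a)\) for every \(s\), as already noted in the proof of \cref{lem:X1-subset-Xstar}. Because the tie-breaking rule \(\argmax\) is a fixed function of the ordering, it selects the same action, so \(\pi_{Q_k}=\pi_{Q^*}\) and \(\Pi_{Q_k}=\Pi_{Q^*}\).

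Actually, we do not even need the identity of the tie-broken policy. It is enough to work with the max: \(\max_{a'}Q_k(s',a')=V^*(s')+\alpha_k\) for every \(s'\), which avoids any dependence on tie-breaking.

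Second, apply the max form of \(F\) componentwise:
\[
(FQ_k)(s,a)=R(s,a)+\gamma\sum_{s'}P(s'|s,a)\bigl(V^*(s')+\alpha_k\bigr).
\]
The terms involving \(V^*\) equal \(Q^*(s,a)\) by the optimal Bellman equation. For the remaining term, row-stochasticity gives \(\sum_{s'}P(s'|s,a)=1\), so it contributes \(\gamma\alpha_k\). Equivalently, in vector form, \(P\Pi^\pi\mathbf 1=\mathbf 1\) for every policy \(\pi\).

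Third, these steps give \(Q_{k+1}=FQ_k=Q^*+\gamma\alpha_k\mathbf 1\in\mathcal X_1\), which is the claim.

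There is no real obstacle. The only point needing care is the tie-breaking in the switching form, since one must check that a uniform shift leaves \(\Pi_{Q_k}\) unchanged. Using the max formulation of \(F\) sidesteps this entirely.
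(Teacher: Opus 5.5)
Your proof is correct, but it handles the maximization differently from the paper. The paper stays in the switching form $Q_{k+1}=R+\gamma P\Pi_{Q_k}Q_k$. It argues that the uniform shift leaves the $\Argmax$ sets unchanged, so $\pi_{Q_k}\in\Theta^*$. It then uses the policy-evaluation identity $R+\gamma P\Pi^{\pi}Q^*=Q^*$ for optimal $\pi$, which comes from the characterization of $\Theta^*$ in the preliminaries, together with $P\Pi_{Q_k}\mathbf 1=\mathbf 1$.

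You work with the max form directly, using $\max_{a'}\bigl(Q^*(s',a')+\alpha_k\bigr)=V^*(s')+\alpha_k$, and then need only the fixed-point equation $FQ^*=Q^*$ and row-stochasticity of $P$. Your argument actually proves the more general shift-equivariance $F(Q+c\mathbf 1)=FQ+\gamma c\mathbf 1$ for every $Q$. It does not depend on tie-breaking or on the characterization of optimal policies. The paper's version instead stays in the language of active modes and uses the identity $R+\gamma P\Pi_Q(Q^*+\alpha\mathbf 1)=Q^*+\gamma\alpha\mathbf 1$ for optimal $\pi_Q$. It reuses that identity in the invariant-tube proof, where $\pi_Q$ need not be greedy for the center point $Y$.

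One small remark on your first step. The claim $\pi_{Q_k}=\pi_{Q^*}$ tacitly assumes the tie-breaking rule depends only on the $\Argmax$ set. The paper needs only the weaker $\pi_{Q_k}\in\Theta^*$, and your max-form argument needs neither, so you were right to drop that step.
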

\begin{proof}
Suppose that $Q_k=Q^*+\alpha_k \mathbf{1}$. Since adding a constant multiple of $\mathbf{1}$ does not change the
action ordering, every tie-broken greedy action of $Q_k$ is optimal for $Q^*$. Hence $\pi_{Q_k}\in\Theta^*$. Using the Bellman update, $Q_{k+1} = R+\gamma P\Pi_{Q_k}(Q^* +\alpha_k \mathbf{1})$. Because $\pi_{Q_k}$ is an optimal policy, we have $R+\gamma P\Pi_{Q_k}Q^*=Q^*$.
In addition, since $P\Pi_{Q_k}$ is stochastic, it satisfies $P\Pi_{Q_k} \mathbf{1}=\mathbf{1}$.
Therefore, it follows that $Q_{k+1}=Q^*+\gamma\alpha_k \mathbf{1}$, and $Q_{k+1}\in \mathcal X_1$, which proves the claim.
\end{proof}

\subsection{Proof of Proposition~\ref{prop:invariant-tube}}
\label{app:proof-prop:invariant-tube}
\begin{proof}
Let $Q\in {\cal T}_\delta$. By definition, there exists $\alpha\in\mathbb R$ such that $\|Q-(Q^*+\alpha\mathbf{1})\|_\infty\le \delta$. Fix any state $s\in {\cal S}_{\mathrm{sep}}$ and any action $b\notin\Phi^*(s)$. Then, we can derive the following inequalities:
\begin{align*}
{\max _{a \in {\Phi ^*}(s)}}Q(s,a) - Q(s,b) =& {\max _{a \in {\cal A}}}{Q^*}(s,a) - {Q^*}(s,b)\\
&+ \left( {{{\max }_{a \in {\Phi ^*}(s)}}Q(s,a) - \left( {{{\max }_{a \in {\cal A}}}{Q^*}(s,a) + \alpha } \right)} \right)\\
& - \left( {Q(s,b) - \left( {{Q^*}(s,b) + \alpha } \right)} \right)\\
\ge& {\max _{a \in {\cal A}}}{Q^*}(s,a) - {Q^*}(s,b)\\
&+ \left( {{{\max }_{a \in {\Phi ^*}(s)}}Q(s,a) - \left( {{{\max }_{a \in {\Phi ^*}(s)}}{Q^*}(s,a) + \alpha } \right)} \right)\\
&- \left( {Q(s,b) - \left( {{Q^*}(s,b) + \alpha } \right)} \right)\\
\ge& {{\bar \Delta }_s} - 2\delta \\
\ge& \bar \Delta  - 2\delta\\
 >& 0.
\end{align*}
Therefore no non-optimal action can be tie-broken greedy at any state $s\in {\cal S}_{\mathrm{sep}}$. For states outside
${\cal S}_{\mathrm{sep}}$, all actions are optimal by definition. Therefore, $\pi_Q(s)\in\Phi^*(s)$ for all $s\in {\cal S}$, and $Q\in \mathcal X^*$. Now let $Y:=Q^*+\alpha\mathbf{1}\in \mathcal X_1$ with $\|Q-Y\|_\infty\le \delta$. Since $Q\in \mathcal X^*$, we have $\pi_Q\in\Theta^*$. Although $\pi_Q$ need not be the tie-broken greedy policy of $Y$, the optimality of $\pi_Q$ implies
\[
R+\gamma P\Pi_Q Y
=
R+\gamma P\Pi_Q(Q^*+\alpha\mathbf{1})
=
Q^*+\gamma\alpha\mathbf{1}
\in \mathcal X_1.
\]
Moreover,
\[
F(Q) -\bigl(R+\gamma P\Pi_QY\bigr)
=
\gamma P\Pi_Q(Q-Y).
\]
Taking the infinity norm and using $\|P\Pi_Q\|_\infty=1$, we obtain
\[
\dist_\infty(F(Q),\mathcal X_1)
\le
\left\|F(Q)-\bigl(R+\gamma P\Pi_QY\bigr)\right\|_\infty
\le
\gamma\|Q-Y\|_\infty
\le
\gamma\delta < \delta.
\]
Therefore, one concludes that $F({\cal T}_\delta)\subset {\cal T}_{\gamma\delta}\subset {\cal T}_\delta$. This completes the proof.
\end{proof}

\subsection{Statement and proof of Corollary~\ref{cor:basic-entrance-Xstar}}
\label{app:proof-cor:basic-entrance-Xstar}
\setcounter{corollary}{0}
\begin{corollary}[Basic finite-time entrance into $\mathcal X^*$]\label{cor:basic-entrance-Xstar}
Let~\cref{assump:optimal-class-separation} hold. Define
\[
K_{\mathrm{basic}}
:=
\begin{cases}
0, & \|Q_0-Q^*\|_\infty<\dfrac{\bar\Delta}{2},\\[1.2ex]
\left\lfloor
\dfrac{
\log\!\left(\dfrac{2\|Q_0-Q^*\|_\infty}{\bar\Delta}\right)
}{
-\log\gamma
}
\right\rfloor+1,
& \|Q_0-Q^*\|_\infty\ge\dfrac{\bar\Delta}{2}.
\end{cases}
\]
Then, we have $Q_k\in \mathcal X^*$ for all $ k\ge K_{\mathrm{basic}}$.
\end{corollary}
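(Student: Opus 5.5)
The plan is to combine the classical contraction bound \cref{eq:qvi-convergence} with the invariant-tube result, \cref{prop:invariant-tube}. The tube itself is not needed for the conclusion. The relevant consequence of \cref{prop:invariant-tube}'s first part is this: any \(Q\) with \(\|Q-Q^*\|_\infty<\bar\Delta/2\) lies in \(\mathcal X^*\).

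This follows by rerunning the gap argument of that proof with \(\alpha=0\) and \(\delta:=\|Q-Q^*\|_\infty<\bar\Delta/2\). For every \(s\in{\cal S}_{\mathrm{sep}}\) and every \(b\notin\Phi^*(s)\),
\[
\max_{a\in\Phi^*(s)}Q(s,a)-Q(s,b)\ge\bar\Delta-2\delta>0,
\]
so the tie-broken greedy action lies in \(\Phi^*(s)\). States outside \({\cal S}_{\mathrm{sep}}\) are trivial, since every action there is optimal. Equivalently, \(Q\in{\cal T}_\delta\subset\mathcal X^*\) with \(\delta<\bar\Delta/2\). Membership in \(\mathcal X^*\) is a pointwise property, so no invariance argument is needed. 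Monotonicity of the bound alone covers all later \(k\).

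\textbf{Case 1:} \(\|Q_0-Q^*\|_\infty<\bar\Delta/2\). By \cref{eq:qvi-convergence}, \(\|Q_k-Q^*\|_\infty\le\gamma^k\|Q_0-Q^*\|_\infty<\bar\Delta/2\) for every \(k\ge0\), so \(Q_k\in\mathcal X^*\) for all \(k\ge0=K_{\mathrm{basic}}\).

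\textbf{Case 2:} \(\|Q_0-Q^*\|_\infty\ge\bar\Delta/2>0\). Here it suffices to show \(\gamma^k\|Q_0-Q^*\|_\infty<\bar\Delta/2\) for all \(k\ge K_{\mathrm{basic}}\). Set \(x:=\log\bigl(2\|Q_0-Q^*\|_\infty/\bar\Delta\bigr)/(-\log\gamma)\ge0\). Then \(K_{\mathrm{basic}}=\lfloor x\rfloor+1>x\), so every \(k\ge K_{\mathrm{basic}}\) satisfies \(k(-\log\gamma)>\log\bigl(2\|Q_0-Q^*\|_\infty/\bar\Delta\bigr)\). Exponentiating gives \(\gamma^{-k}>2\|Q_0-Q^*\|_\infty/\bar\Delta\), which is exactly the required strict inequality. \cref{eq:qvi-convergence} then gives \(\|Q_k-Q^*\|_\infty<\bar\Delta/2\), and the first paragraph yields \(Q_k\in\mathcal X^*\).

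There is no real obstacle. The only point needing care is strictness: the floor-plus-one ensures \(k\) strictly exceeds \(x\), even when \(x\) is an integer, so that \(\bar\Delta-2\delta>0\) rather than \(\ge0\). A non-strict gap could allow a tie in which a non-optimal action is selected. The hypothesis \(\gamma\in(0,1)\) makes \(-\log\gamma>0\), so the division is legitimate.
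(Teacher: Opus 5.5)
Your proof is correct and follows essentially the same route as the paper: it combines the $\gamma$-contraction bound \cref{eq:qvi-convergence}, which gives $\|Q_k-Q^*\|_\infty<\bar\Delta/2$ for all $k\ge K_{\mathrm{basic}}$, with the inclusion ${\cal T}_\delta\subset\mathcal X^*$ from \cref{prop:invariant-tube}. The paper applies that inclusion through some $\delta_k$ with $\dist_\infty(Q_k,\mathcal X_1)\le\delta_k<\bar\Delta/2$, while you rerun the gap argument directly with $\alpha=0$. That direct rerun also covers the edge case $Q_k=Q^*$ without appeal to the proposition's restriction $\delta>0$. Your explicit verification of the floor-plus-one arithmetic is correct, and so is your remark that invariance is not needed.
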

\begin{proof}
By~\cref{eq:qvi-convergence}, one has $\|Q_k-Q^*\|_\infty \le \gamma^k\|Q_0-Q^*\|_\infty$.
The definition of $K_{\mathrm{basic}}$ guarantees
\[
\gamma^k\|Q_0-Q^*\|_\infty<\frac{\bar\Delta}{2},
\qquad \forall k\ge K_{\mathrm{basic}}.
\]
Hence, for all $k\ge K_{\mathrm{basic}}$, we get $\|Q_k-Q^*\|_\infty<\frac{\bar\Delta}{2}$.
Since $Q^*\in \mathcal X_1$, this implies $\dist_\infty(Q_k,\mathcal X_1)< \frac{\bar\Delta}{2}$.
For each such $k$, choose a number $\delta_k$ satisfying
\[
\dist_\infty(Q_k,\mathcal X_1)
\le
\delta_k
<
\frac{\bar\Delta}{2},
\]
which is possible because the distance is strictly smaller than \(\bar\Delta/2\). Then $Q_k\in{\cal T}_{\delta_k}$, and by~\cref{prop:invariant-tube} we have ${\cal T}_{\delta_k}\subset\mathcal X^*$. Hence $Q_k\in \mathcal X^*$ for all $k\ge K_{\mathrm{basic}}$, which completes the proof.
\end{proof}

\subsection{Proof of Lemma~\ref{lem:exact-stochastic-representation}}
\label{app:proof-lem:exact-stochastic-representation}
\begin{proof}
For each state $s\in {\cal S}$, define the optimality error
\[
\delta_k(s)
:=
\max_{a\in {\cal A}} Q_k(s,a) - \max_{a\in {\cal A}} Q^*(s,a).
\]
Since $Q_k(s,a)=Q^*(s,a)+e_k(s,a)$, we have
\[
\min_{a\in {\cal A}} e_k(s,a)
\le
\delta_k(s)
\le
\max_{a\in {\cal A}} e_k(s,a).
\]
Therefore, $\delta_k(s)$ belongs to the convex hull of the finite set
$\{e_k(s,a):a\in {\cal A}\}$. Hence there exists a probability vector $\mu_k(\cdot|s)\in \Delta_{|{\cal A}|}$ for each $k\geq 0$ such that
\[
{\delta _k}(s) = \sum\limits_{a \in {\cal A}} {{\mu _k}(a|s)} {e_k}(s,a) = {\mu _k}{( \cdot |s)^\top}{e_k}(s, \cdot ).
\]
Stacking these equalities over all states and using the definition of $\Pi^{\mu_k}$ in~\cref{eq:policy-transition-matrix}, with the same action-major ordering as the vector $e_k$, yields
\[
{\delta _k} =
\left[
\begin{array}{c}
{{\delta _k}(1)}\\
{{\delta _k}(2)}\\
\vdots\\
{{\delta _k}(|{\cal S}|)}
\end{array}
\right]
=
\left[
\begin{array}{c}
{\mu _k( \cdot \mid 1)^\top \otimes e_1^\top}\\
{\mu _k( \cdot \mid 2)^\top \otimes e_2^\top}\\
\vdots\\
{\mu _k( \cdot \mid |{\cal S}|)^\top \otimes e_{|{\cal S}|}^\top}
\end{array}
\right]
\left[
\begin{array}{c}
{{e_k}(\cdot,1)}\\
{{e_k}(\cdot,2)}\\
\vdots\\
{{e_k}(\cdot,|{\cal A}|)}
\end{array}
\right]
=
{\Pi ^{{\mu _k}}}{e_k} \in {{\mathbb R}^{|{\cal S}|}}.
\]
Now subtract the Bellman optimality equation from the Q-VI update:
\begin{align*}
e_{k+1}(s,a)
&=
\gamma \sum_{s'\in {\cal S}} P(s'|s,a)
\left(
\max_{b\in {\cal A}} Q_k(s',b) - \max_{b\in {\cal A}} Q^*(s',b)
\right) \\
&=
\gamma \sum_{s'\in {\cal S}} P(s'|s,a)\,\delta_k(s').
\end{align*}
In vector form, we have
\[
e_{k+1}
=
\gamma P\delta_k
=
\gamma P\Pi^{\mu_k}e_k
=
A_{\mu_k}e_k.
\]
This proves the claim.
\end{proof}

\subsection{Proof of Lemma~\ref{lem:projected-error-dynamics}}
\label{app:proof-lem:projected-error-dynamics}
\begin{proof}
Since \(z_k:=\Pi_\perp e_k\), we have the orthogonal decomposition
\[
e_k=(I-\Pi_\perp)e_k+\Pi_\perp e_k.
\]
Using $\Pi_\perp=I-\frac{1}{n}\mathbf{1}\mathbf{1}^\top$, it follows that
\[
(I-\Pi_\perp)e_k
=
\frac{1}{n}\mathbf{1}\mathbf{1}^\top e_k
=
\left(\frac{1}{n}\mathbf{1}^\top e_k\right)\mathbf{1}.
\]
Therefore, by setting $\alpha_k:=\frac{1}{n}\mathbf{1}^\top e_k$,
we may write
\[
e_k=\alpha_k\mathbf{1}+z_k.
\]
Then, we have
\[
A_{\mu_k}e_k
=
\gamma \alpha_k \mathbf{1} + A_{\mu_k}z_k.
\]
Applying $\proj$ to both sides yields
\[
z_{k+1}
=
\proj e_{k+1}
=
\proj A_{\mu_k}e_k
=
\proj A_{\mu_k} z_k
=
\proj A_{\mu_k}\proj z_k
=
\bar A_{\mu_k} z_k,
\]
which proves the first claim. For the second claim, note that $\mathcal X_1 = Q^* + \mathrm{span}(\mathbf{1})$.
Therefore, we get
\[
\dist_2(Q_k,\mathcal X_1)
=
\dist_2(e_k,\mathrm{span}(\mathbf{1}))
=
\|\proj e_k\|_2
=
\|z_k\|_2,
\]
which completes the proof.
\end{proof}

\subsection{Statement and proof of Lemma~\ref{lem:restricted-matrix-properties}}
\label{app:proof-lem:restricted-matrix-properties}
\setcounter{lemma}{6}
\begin{lemma}\label{lem:restricted-matrix-properties}
The following statements hold:
\begin{enumerate}
\item Let $(v,\lambda)$ be an eigenvector-eigenvalue pair of $A_i$. Then
\[
\bar A_i (\proj v) = \lambda (\proj v).
\]
In particular, if $\proj v \neq 0$, then $(\proj v,\lambda)$ is an eigenvector-eigenvalue pair of $\bar A_i$.

\item $(\mathbf{1},0)$ is an eigenvector-eigenvalue pair of $\bar A_i$.
\end{enumerate}
\end{lemma}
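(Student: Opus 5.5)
The whole lemma follows from one fact about \(\proj\) and \(A_i\): the matrix \(A_i=\gamma P\Pi^{\pi_i}\) maps \(\mathbf 1\) to \(\gamma\mathbf 1\), because \(P\Pi^{\pi_i}\) is row-stochastic. Hence \(A_i(I-\proj)=\gamma(I-\proj)\). Indeed, \((I-\proj)=\frac1n\mathbf 1\mathbf 1^\top\), so \(A_i(I-\proj)=\frac1n(A_i\mathbf 1)\mathbf 1^\top=\frac{\gamma}{n}\mathbf 1\mathbf 1^\top\). Together with \(\proj\mathbf 1=0\) and \(\proj^2=\proj\), this gives the key identity
\[
\proj A_i(I-\proj)=\gamma\proj(I-\proj)=0,
\qquad\text{so}\qquad
\proj A_i=\proj A_i\proj=\bar A_i .
\]
I will first establish this identity and then read off both items.

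For item 1, take \(A_iv=\lambda v\). Applying the identity,
\[
\bar A_i(\proj v)=\proj A_i\proj\proj v=\proj A_i\proj v=\proj A_i v=\lambda\proj v,
\]
where the middle steps use \(\proj^2=\proj\) and then \(\proj A_i\proj=\proj A_i\). If \(\proj v\neq0\), this is an eigenpair of \(\bar A_i\) by definition. Eigenvalues and eigenvectors may be complex. This causes no trouble, because all the manipulations are linear over \(\mathbb C\).

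For item 2, \(\bar A_i\mathbf 1=\proj A_i\proj\mathbf 1=\proj A_i\cdot 0=0=0\cdot\mathbf 1\), and \(\mathbf 1\neq0\). So \((\mathbf 1,0)\) is an eigenpair.

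There is no real obstacle. The only point needing care is to use \(A_i\mathbf 1=\gamma\mathbf 1\) to remove the inner projection in item 1. Without that step one cannot pass from \(\proj A_i\proj v\) to \(\lambda\proj v\) when \(v\) has a nonzero component along \(\mathbf 1\).
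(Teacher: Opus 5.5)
Your proof is correct and is essentially the paper's argument. Both rest on \(A_i\mathbf 1=\gamma\mathbf 1\) and \(\proj\mathbf 1=0\). The paper expands \(\proj v=v-\frac{\mathbf 1^\top v}{n}\mathbf 1\) directly, whereas you first package the same computation as the operator identity \(\proj A_i\proj=\proj A_i\), which is the \(k=1\) case of \cref{lem:projection-product-identity}; item 2 is handled identically.
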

\begin{proof}
Recall that
\[
\bar A_i = \proj A_i \proj,
\qquad
\proj = I-\frac1n \mathbf{1} \mathbf{1}^\top,
\qquad
A_i \mathbf{1} = \gamma \mathbf{1}.
\]
To prove the first statement, let $(v,\lambda)$ be an eigenvector-eigenvalue pair of $A_i$ so that $A_i v = \lambda v$. Since $\proj v = v - \frac{\mathbf{1}^\top v}{n}\mathbf{1}$, we have
\[
A_i(\proj v)
=
A_i v - \frac{\mathbf{1}^\top v}{n}A_i\mathbf{1}
=
\lambda v - \frac{\gamma\,\mathbf{1}^\top v}{n}\mathbf{1}.
\]
Applying $\proj$ to both sides yields
\begin{align*}
\bar A_i(\proj v)
&=
\proj A_i \proj v \\
&=
\proj\left(\lambda v - \frac{\gamma\,\mathbf{1}^\top v}{n}\mathbf{1}\right) \\
&=
\lambda \proj v
-
\frac{\gamma\,\mathbf{1}^\top v}{n}\proj \mathbf{1}.
\end{align*}
Since $\proj \mathbf{1} = 0$, it follows that $\bar A_i(\proj v)=\lambda \proj v$. Therefore, if $\proj v\neq 0$, then $(\proj v,\lambda)$ is an eigenvector-eigenvalue pair of $\bar A_i$.
For the second statement, because $\proj \mathbf{1} = 0$, we have $\bar A_i \mathbf{1}
=
\proj A_i \proj \mathbf{1}
=
\proj A_i 0
=
0$. Therefore, $(\mathbf{1},0)$ is an eigenvector-eigenvalue pair of $\bar A_i$. This completes the proof.
\end{proof}

\subsection{Statement and proof of Lemma~\ref{lem:projection-product-identity}}
\label{app:proof-lem:projection-product-identity}
\setcounter{lemma}{7}
\begin{lemma}\label{lem:projection-product-identity}
For any vector \(x \in \mathbb{R}^n\), any positive integer \(k\), and any switching sequence $\bar{\sigma}_k := (\sigma_1,\sigma_2,\ldots,\sigma_k)\in\{1,2,\ldots,M\}^k$, we have
\[
\proj A_{\sigma_k}\cdots A_{\sigma_2}A_{\sigma_1}x
=
\bar A_{\sigma_k}\cdots \bar A_{\sigma_2}\bar A_{\sigma_1}x.
\]
\end{lemma}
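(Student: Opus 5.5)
The proof will go by induction on \(k\). Its only ingredient is that every \(A_i=\gamma P\Pi^{\pi_i}\) maps \(\mathbf 1\) to \(\gamma\mathbf 1\), because \(P\Pi^{\pi_i}\) is row-stochastic. Hence \(A_i\) leaves \(\operatorname{span}(\mathbf 1)\) invariant, and so \((I-\proj)\) followed by \(A_i\) lands in \(\operatorname{span}(\mathbf 1)\), which \(\proj\) then annihilates. The key one-step identity I would record first is
\[
\proj A_i = \proj A_i \proj,\qquad i=1,\ldots,M,
\]
obtained by writing \(A_i = A_i\proj + A_i(I-\proj)\). Here \(A_i(I-\proj)=\frac{1}{n}A_i\mathbf 1\mathbf 1^\top=\frac{\gamma}{n}\mathbf 1\mathbf 1^\top\), and \(\proj\mathbf 1=0\). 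This is the same computation used in the proof of \cref{lem:restricted-matrix-properties}.

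For the base case \(k=1\), the identity gives \(\proj A_{\sigma_1}x=\proj A_{\sigma_1}\proj x\). I need this to equal \(\bar A_{\sigma_1}x\), and by definition \(\bar A_{\sigma_1}=\proj A_{\sigma_1}\proj\), so the two agree.

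For the inductive step, assume \(\proj A_{\sigma_{k}}\cdots A_{\sigma_1}x=\bar A_{\sigma_{k}}\cdots\bar A_{\sigma_1}x\). I would set \(y:=A_{\sigma_k}\cdots A_{\sigma_1}x\) and compute
\[
\proj A_{\sigma_{k+1}}y=\proj A_{\sigma_{k+1}}\proj y=\bar A_{\sigma_{k+1}}\,\proj y=\bar A_{\sigma_{k+1}}\bar A_{\sigma_k}\cdots\bar A_{\sigma_1}x .
\]
The first equality uses the key identity. The second uses \(\proj^2=\proj\), so that \(\proj A\proj y=\proj A\proj(\proj y)\). The last uses the induction hypothesis.

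There is no real obstacle. The only point needing care is to make sure the identity \(\proj A_i=\proj A_i\proj\) is invoked and not the false identity \(A_i\proj=\proj A_i\). The all-ones component of \(y\) is genuinely nonzero in general, and it is discarded only because \(A_{\sigma_{k+1}}\) keeps it inside \(\operatorname{span}(\mathbf 1)\), where the outer \(\proj\) annihilates it.
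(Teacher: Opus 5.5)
Your proof is correct and follows essentially the same route as the paper: induction on \(k\), using \(A_i\mathbf 1=\gamma\mathbf 1\) and \(\proj\mathbf 1=0\) to discard the all-ones component at each step. The only difference is cosmetic: you package the step as the operator identity \(\proj A_i=\proj A_i\proj\), whereas the paper carries out the same decomposition \(x=\proj x+(I-\proj)x\) vectorwise inside the induction.
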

\begin{proof}
We prove the claim by induction on \(k \geq 0\).
First, consider the case \(k=1\). Since $x = \proj x + (I-\proj)x$, we obtain
\[
\proj A_{\sigma_1}x
=
\proj A_{\sigma_1}\proj x
+
\proj A_{\sigma_1}(I-\proj)x.
\]
Now note that \((I-\proj)x \in \mathrm{span}(\mathbf{1})\), and hence, there exists a scalar \(c\in\mathbb R\) such that
\[
(I-\proj)x = c\mathbf{1}.
\]
Since \(A_i\mathbf{1}=\gamma \mathbf{1}\) for every \(i\), it follows that
\[
\proj A_{\sigma_1}(I-\proj)x
=
c\,\proj A_{\sigma_1}\mathbf{1}
=
c\gamma \proj \mathbf{1}
=0.
\]
Therefore,
\[
\proj A_{\sigma_1}x
=
\proj A_{\sigma_1}\proj x
=
\bar A_{\sigma_1}x.
\]
Hence, the claim holds for \(k=1\). Assume now that the identity holds for some \(k\ge 1\), namely,
\[
\proj A_{\sigma_k}\cdots A_{\sigma_2}A_{\sigma_1}x
=
\bar A_{\sigma_k}\cdots \bar A_{\sigma_2}\bar A_{\sigma_1}x.
\]
We show that it also holds for \(k+1\). We write
\[
A_{\sigma_k}\cdots A_{\sigma_1}x
=
\proj A_{\sigma_k}\cdots A_{\sigma_1}x
+
(I-\proj)A_{\sigma_k}\cdots A_{\sigma_1}x.
\]
Applying \(\proj A_{\sigma_{k+1}}\) to both sides gives
\begin{align*}
\proj A_{\sigma_{k+1}}A_{\sigma_k}\cdots A_{\sigma_1}x
&=
\proj A_{\sigma_{k+1}}\proj A_{\sigma_k}\cdots A_{\sigma_1}x \\
&\quad
+
\proj A_{\sigma_{k+1}}(I-\proj)A_{\sigma_k}\cdots A_{\sigma_1}x.
\end{align*}
Again, since $(I-\proj)A_{\sigma_k}\cdots A_{\sigma_1}x \in \mathrm{span}(\mathbf{1})$, there exists some scalar \(d\in\mathbb R\) such that
\[
(I-\proj)A_{\sigma_k}\cdots A_{\sigma_1}x = d\mathbf{1}.
\]
Using \(A_i\mathbf{1}=\gamma \mathbf{1}\) and \(\proj\mathbf{1}=0\), we obtain
\[
\proj A_{\sigma_{k+1}}(I-\proj)A_{\sigma_k}\cdots A_{\sigma_1}x
=
d\,\proj A_{\sigma_{k+1}}\mathbf{1}
=
d\gamma \proj \mathbf{1}
=
0.
\]
Hence,
\[
\proj A_{\sigma_{k+1}}A_{\sigma_k}\cdots A_{\sigma_1}x
=
\proj A_{\sigma_{k+1}}\proj A_{\sigma_k}\cdots A_{\sigma_1}x
=
\bar A_{\sigma_{k+1}}\proj A_{\sigma_k}\cdots A_{\sigma_1}x.
\]
By the induction hypothesis, $\proj A_{\sigma_k}\cdots A_{\sigma_1}x
=
\bar A_{\sigma_k}\cdots \bar A_{\sigma_1}x$. Substituting this into the previous equality yields
\[
\proj A_{\sigma_{k+1}}A_{\sigma_k}\cdots A_{\sigma_1}x
=
\bar A_{\sigma_{k+1}}\bar A_{\sigma_k}\cdots \bar A_{\sigma_1}x.
\]
Therefore, the claim holds for \(k+1\). By induction, the result follows for all \(k\ge 1\).
\end{proof}

\subsection{Statement and proof of Lemma~\ref{lem:restricted-spectral-radius}}
\label{app:proof-lem:restricted-spectral-radius}
\setcounter{lemma}{8}
\begin{lemma}\label{lem:restricted-spectral-radius}
Let the eigenvalues of \(B_i:=\gamma^{-1}A_i=P\Pi^{\pi_i}\), counted with algebraic multiplicity over the complexification, be ordered as
\[
\lambda_{1,i}=1,\lambda_{2,i},\ldots,\lambda_{n,i},
\qquad
n:=|{\cal S}||{\cal A}|,
\]
where \(\lambda_{1,i}=1\) is the Perron eigenvalue associated with the invariant constant direction and the remaining eigenvalues are ordered so that
\[
|\lambda_{2,i}|\ge |\lambda_{3,i}|\ge \cdots \ge |\lambda_{n,i}|.
\]
If the eigenvalue \(1\) has algebraic multiplicity larger than one, another copy of \(1\) may appear among \(\lambda_{2,i},\ldots,\lambda_{n,i}\). Then,
\[
\rho(\bar A_i)=\gamma |\lambda_{2,i}|,
\]
where \(\rho\) denotes the spectral radius.
\end{lemma}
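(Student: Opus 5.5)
We prove that the spectrum of \(\bar A_i=\proj A_i\proj\) is exactly the spectrum of \(A_i\) with one copy of the Perron eigenvalue \(\gamma\) replaced by \(0\). Equivalently, the eigenvalues of \(\bar B_i:=\proj B_i\proj\) are \(0,\lambda_{2,i},\ldots,\lambda_{n,i}\), counted with algebraic multiplicity. Since \(\bar A_i=\gamma\bar B_i\), this gives \(\rho(\bar A_i)=\gamma\max\{0,|\lambda_{2,i}|,\ldots,|\lambda_{n,i}|\}=\gamma|\lambda_{2,i}|\) by the chosen ordering. This remains valid when \(1\) has multiplicity greater than one, because then \(|\lambda_{2,i}|=1\) and an extra copy of \(1\) survives.

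To establish the spectral claim, we work in an orthonormal basis adapted to the decomposition \(\mathbb R^n=\operatorname{span}(\mathbf 1)\oplus\operatorname{span}(\mathbf 1)^\perp\). Let \(U=[\,\mathbf 1/\sqrt n,\;W\,]\) be orthogonal, where the columns of \(W\) span \(\operatorname{span}(\mathbf 1)^\perp\). Because \(B_i\mathbf 1=\mathbf 1\), the first column of \(U^\top B_iU\) is \(e_1\). Hence
\[
U^\top B_iU=\begin{bmatrix}1 & c^\top\\ 0 & W^\top B_iW\end{bmatrix}
\]
for some vector \(c\). This block upper-triangular form shows that the characteristic polynomial of \(B_i\) factors as \((\lambda-1)\det(\lambda I-W^\top B_iW)\). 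Consequently, the multiset of eigenvalues of \(W^\top B_iW\) is exactly \(\{\lambda_{2,i},\ldots,\lambda_{n,i}\}\); one copy of \(1\) is removed, which matches the convention in the statement.

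Next we compute the projected matrix in the same basis. Since \(\proj=WW^\top\), we have
\[
U^\top\bar B_iU=U^\top WW^\top B_iWW^\top U=\begin{bmatrix}0&0\\0&W^\top B_iW\end{bmatrix}.
\]
This uses \(W^\top\mathbf 1=0\) and \(W^\top W=I\). The spectrum of \(\bar B_i\) is therefore \(\{0\}\cup\{\lambda_{2,i},\ldots,\lambda_{n,i}\}\), and the conclusion follows after multiplying by \(\gamma\). This argument is consistent with \cref{lem:restricted-matrix-properties}, which already exhibits the eigenpair \((\mathbf 1,0)\) and transfers the remaining eigenpairs.

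The main point requiring care is multiplicity bookkeeping. An eigenvector-only argument, as in \cref{lem:restricted-matrix-properties}, would fail for defective \(B_i\), or when \(\proj v=0\) for a non-Perron eigenvector. The block-triangular characteristic-polynomial argument handles this directly, because it tracks algebraic multiplicities and never needs diagonalizability.
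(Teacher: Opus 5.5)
Your proof is correct and is essentially the paper's argument: both pass to a basis adapted to $\operatorname{span}(\mathbf 1)\oplus\operatorname{span}(\mathbf 1)^\perp$, use $B_i\mathbf 1=\mathbf 1$ to get a block upper-triangular form, and show that the projected matrix becomes block diagonal with blocks $0$ and $\Gamma_i$. Your orthonormal choice $U=[\,\mathbf 1/\sqrt n,\;W\,]$ with $\proj=WW^\top$ is only a cosmetic variant of the paper's general basis $T=[\,\mathbf 1,w_2,\ldots,w_n\,]$, for which $T^{-1}\proj T=\operatorname{diag}(0,I_{n-1})$. (A minor aside: your closing remark about $\proj v=0$ for a non-Perron eigenvector is vacuous, since $\proj v=0$ forces $v\in\operatorname{span}(\mathbf 1)$, which is the Perron direction; this does not affect the proof.)
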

\begin{proof}
All spectral statements below are understood over \(\mathbb C\). Recall that \(A_i=\gamma P\Pi^{\pi_i}\) satisfies $A_i \mathbf{1} = \gamma \mathbf{1}$.
Let \(W:=\mathrm{span}(\mathbf{1})^\perp\), and choose real vectors $w_2,\ldots,w_n$ forming a basis of \(W\). Define the invertible matrix
\[
T:=\begin{bmatrix}\mathbf{1} & w_2 & \cdots & w_n\end{bmatrix}\in\mathbb R^{n\times n}.
\]
Since \(A_i\mathbf{1}=\gamma \mathbf{1}\), the one-dimensional subspace \(\mathrm{span}(\mathbf{1})\) is \(A_i\)-invariant. Therefore, in the basis induced by \(T\), the matrix \(A_i\) has the block upper-triangular form
\[
T^{-1}A_iT
=
\begin{bmatrix}
\gamma & \alpha_i^\top\\
0 & \Gamma_i
\end{bmatrix}
\]
for some vector \(\alpha_i\in\mathbb R^{n-1}\) and some matrix \(\Gamma_i\in\mathbb R^{(n-1)\times(n-1)}\). Complexifying this block representation does not change the eigenvalues.
Next, since \(\proj\) is the orthogonal projection onto \(W=\mathrm{span}(\mathbf{1})^\perp\), we have
\[
\proj \mathbf{1} =0,
\qquad
\proj w_j = w_j,\quad j=2,\ldots,n.
\]
Hence, in the same basis,
\[
T^{-1}\proj T
=
\begin{bmatrix}
0 & 0\\
0 & I_{n-1}
\end{bmatrix}.
\]
Therefore,
\begin{align*}
T^{-1}\bar A_i T
&=T^{-1}\proj A_i \proj T\\
&=(T^{-1}\proj T)(T^{-1}A_iT)(T^{-1}\proj T)\\
&=
\begin{bmatrix}
0 & 0\\
0 & I_{n-1}
\end{bmatrix}
\begin{bmatrix}
\gamma & \alpha_i^\top\\
0 & \Gamma_i
\end{bmatrix}
\begin{bmatrix}
0 & 0\\
0 & I_{n-1}
\end{bmatrix}\\
&=
\begin{bmatrix}
0 & 0\\
0 & \Gamma_i
\end{bmatrix}.
\end{align*}
It follows that
\[
\sigma(\bar A_i)=\{0\}\cup \sigma(\Gamma_i).
\]
On the other hand, since
\[
T^{-1}A_iT
=
\begin{bmatrix}
\gamma & \alpha_i^\top\\
0 & \Gamma_i
\end{bmatrix}
\]
is block upper triangular, its spectrum is
\[
\sigma(A_i)=\{\gamma\}\cup \sigma(\Gamma_i).
\]
Equivalently, if the eigenvalues of \(B_i=\gamma^{-1}A_i\) are ordered as in the statement, then the eigenvalues of \(A_i\) are $\gamma,\gamma\lambda_{2,i},\ldots,\gamma\lambda_{n,i}$, and the eigenvalues of \(\bar A_i\) are $0,\gamma\lambda_{2,i},\ldots,\gamma\lambda_{n,i}$. Therefore,
\[
\rho(\bar A_i)
=
\max\bigl\{0,\,\gamma|\lambda_{2,i}|,\ldots,\gamma|\lambda_{n,i}|\bigr\}
=
\gamma |\lambda_{2,i}|.
\]
This completes the proof.
\end{proof}

\subsection{Statement and proof of Lemma~\ref{lem:jsr-upper-bound}}
\label{app:proof-lem:jsr-upper-bound}
\setcounter{lemma}{9}
\begin{lemma}[A computable upper bound on the JSR]\label{lem:jsr-upper-bound}
Let \(\bar{\cal H}:=\{\bar A_1,\bar A_2,\ldots,\bar A_M\}\) and suppose that there exist a submultiplicative matrix norm \(\|\cdot\|_*\) and a constant \(\bar\beta\in(0,1)\) such that
\[
\|\bar A_i\|_* \le \bar\beta,
\qquad
\forall i\in\{1,2,\ldots,M\}.
\]
Then the JSR of \(\bar{\cal H}\) satisfies $\rho(\bar A_1,\bar A_2,\ldots,\bar A_M)\le \bar\beta$.
\end{lemma}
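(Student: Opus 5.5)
The plan is to bound every product in the JSR definition directly by the norm bound and then pass to the limit. The paper's \cref{def:joint-spectral-radius} lets us compute the JSR with any submultiplicative matrix norm, so I will take \(\|\cdot\|_*\) itself.

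First, fix \(k\ge1\) and an arbitrary switching sequence \(\bar\sigma_k=(\sigma_1,\ldots,\sigma_k)\in\{1,\ldots,M\}^k\). Submultiplicativity gives
\[
\left\|\bar A_{\sigma_k}\cdots\bar A_{\sigma_1}\right\|_*
\le
\prod_{j=1}^k\left\|\bar A_{\sigma_j}\right\|_*
\le
\bar\beta^{\,k},
\]
using the hypothesis \(\|\bar A_i\|_*\le\bar\beta\) for each factor. Since the bound is uniform in \(\bar\sigma_k\), taking the maximum over \(\mathcal M^k\) and then the \(1/k\)-th power yields
\[
\max_{\bar\sigma_k}\left\|\bar A_{\sigma_k}\cdots\bar A_{\sigma_1}\right\|_*^{1/k}\le\bar\beta
\quad\text{for every } k.
\]

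Second, the limit in the definition of \(\rho(\bar A_1,\ldots,\bar A_M)\) exists; this is standard via Fekete's lemma, since \(k\mapsto\log\max_{\bar\sigma_k}\|\cdot\|_*\) is subadditive. The limit therefore inherits the bound, giving \(\rho(\bar A_1,\ldots,\bar A_M)\le\bar\beta\). If one prefers not to rely on existence of the limit, the same argument shows that the \(\limsup\) is at most \(\bar\beta\), and this suffices.

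There is no real obstacle here. The only point needing care is the norm-independence of the JSR, which the paper already records after \cref{def:joint-spectral-radius}. This is what allows us to evaluate the JSR in the particular norm \(\|\cdot\|_*\) rather than in a fixed reference norm. The restriction \(\bar\beta\in(0,1)\) plays no role in the inequality itself.
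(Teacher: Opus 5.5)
Your proposal is correct and follows essentially the same argument as the paper: evaluate the JSR in the norm $\|\cdot\|_*$, bound every length-$k$ product by $\bar\beta^k$ via submultiplicativity, then take the maximum, the $k$-th root, and the limit. Your side remarks are accurate and go slightly beyond the paper's proof: that the limit exists by subadditivity (or that the $\limsup$ suffices), and that the restriction $\bar\beta\in(0,1)$ is not needed for the inequality.
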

\begin{proof}
By the definition of the JSR, for any positive integer \(k\),
\[
\rho(\bar A_1,\ldots,\bar A_M)
=
\lim_{k\to\infty}
\max_{\bar\sigma_k\in\{1,\ldots,M\}^k}
\left\|
\bar A_{\sigma_k}\cdots \bar A_{\sigma_1}
\right\|_*^{1/k}.
\]
Fix any switching sequence $\bar\sigma_k=(\sigma_1,\sigma_2,\ldots,\sigma_k)\in\{1,\ldots,M\}^k$.
Since \(\|\cdot\|_*\) is submultiplicative, we have
\[
\left\|
\bar A_{\sigma_k}\cdots \bar A_{\sigma_1}
\right\|_*
\le
\prod_{j=1}^k \|\bar A_{\sigma_j}\|_*
\le
\bar\beta^k.
\]
Taking the maximum over all switching sequences yields $\max_{\bar\sigma_k\in\{1,\ldots,M\}^k}
\left\|
\bar A_{\sigma_k}\cdots \bar A_{\sigma_1}
\right\|_*
\le
\bar\beta^k$. Taking the \(k\)-th root and letting \(k\to\infty\), we obtain $\rho(\bar A_1,\bar A_2,\ldots,\bar A_M)\le \bar\beta$.
This completes the proof.
\end{proof}

\subsection{Proof of Lemma~\ref{lem:restricted-jsr-upper-gamma}}
\label{app:proof-lem:restricted-jsr-upper-gamma}
\begin{proof}
Fix any submultiplicative matrix norm \(\|\cdot\|\), any positive integer \(k\geq 0\), and any switching sequence $\bar\sigma_k=(\sigma_1,\ldots,\sigma_k)\in\{1,2,\ldots,M\}^k$.
\cref{lem:projection-product-identity} yields
\[
\bar A_{\sigma_k}\cdots \bar A_{\sigma_1}x
=
\proj A_{\sigma_k}\cdots A_{\sigma_1}x
\qquad
\forall x\in\mathbb R^n.
\]
Hence, this implies $\bar A_{\sigma_k}\cdots \bar A_{\sigma_1}
= \proj A_{\sigma_k}\cdots A_{\sigma_1}$.
Therefore, it follows that
\[
\bigl\|\bar A_{\sigma_k}\cdots \bar A_{\sigma_1}\bigr\|
\le
\|\proj\|\,\bigl\|A_{\sigma_k}\cdots A_{\sigma_1}\bigr\|.
\]
Taking the maximum over all switching sequences gives
\[
\max_{\bar\sigma_k}
\bigl\|\bar A_{\sigma_k}\cdots \bar A_{\sigma_1}\bigr\|
\le
\|\proj\|
\max_{\bar\sigma_k}
\bigl\|A_{\sigma_k}\cdots A_{\sigma_1}\bigr\|.
\]
Taking the \(k\)-th root yields
\[
\left(
\max_{\bar\sigma_k}
\bigl\|\bar A_{\sigma_k}\cdots \bar A_{\sigma_1}\bigr\|
\right)^{1/k}
\le
\|\proj\|^{1/k}
\left(
\max_{\bar\sigma_k}
\bigl\|A_{\sigma_k}\cdots A_{\sigma_1}\bigr\|
\right)^{1/k}.
\]
Letting \(k\to\infty\), and using \(\|\proj\|^{1/k}\to 1\), we obtain
\[
\rho(\bar A_1,\ldots,\bar A_M)
\le
\rho(A_1,\ldots,A_M).
\]
Since \(\rho(A_1,\ldots,A_M)=\gamma\) from~\cref{lem:full-jsr-gamma}, it follows that $\rho(\bar A_1,\ldots,\bar A_M)\le \gamma$. This completes the proof.
\end{proof}

\subsection{Statement and proof of Lemma~\ref{lem:common-lyapunov-restricted-family}}
\label{app:proof-lem:common-lyapunov-restricted-family}
\setcounter{lemma}{11}
\begin{lemma}[Common Lyapunov function for the restricted switching family]\label{lem:common-lyapunov-restricted-family}
Let
\[
\bar{\mathcal H}:=\{\bar A_1,\bar A_2,\ldots,\bar A_M\},
\qquad
\bar \rho := \rho(\bar A_1,\bar A_2,\ldots,\bar A_M),
\]
and fix any $\epsilon>0$ such that
\[
\beta_\epsilon:=\bar\rho+\epsilon \in (0,1).
\]
For each integer \(t\ge 0\), define the function
\[
V_\varepsilon^t(x)
:=
\sum_{k=0}^t
\beta_\varepsilon^{-2k}
\max_{\bar \sigma_k\in\{1,2,\ldots,M\}^k}
\left\|
\bar A_{\sigma_k}\cdots \bar A_{\sigma_1}x
\right\|_2^2,
\qquad x\in\mathbb R^n.
\]
Then the following statements hold:
\begin{enumerate}
\item For every \(t\ge 0\),
\[
V_\varepsilon^{t+1}(x)
\ge
\|x\|_2^2
+
\beta_\varepsilon^{-2}
\max_{i\in\{1,\ldots,M\}}
V_\varepsilon^t(\bar A_i x).
\]

\item For every \(t\ge 0\) and every \(x\in\mathbb R^n\), $V_\varepsilon^t(\lambda x)=|\lambda|^2 V_\varepsilon^t(x)$ for all $\lambda\in\mathbb R$, and $V_\varepsilon^t(x)\le V_\varepsilon^{t+1}(x)$.

\item There exists a constant \(C_\varepsilon>0\) such that
\[
\|x\|_2^2
\le
V_\varepsilon^t(x)
\le
C_\varepsilon \|x\|_2^2,
\qquad
\forall x\in\mathbb R^n,\ \forall t\ge 0.
\]

\item For every \(x\in\mathbb R^n\), the limit
\[
V_\varepsilon^\infty(x):=\lim_{t\to\infty}V_\varepsilon^t(x)
\]
exists and is finite. Moreover, $\|x\|_2^2
\le
V_\varepsilon^\infty(x)
\le
C_\varepsilon \|x\|_2^2$.

\item The function $p_\varepsilon(x):=\sqrt{V_\varepsilon^\infty(x)}$ is a norm on \(\mathbb R^n\).

\item The function \(V_\varepsilon^\infty\) satisfies the Lyapunov inequality
\[
V_\varepsilon^\infty(\bar A_i x)
\le
\beta_\varepsilon^2 V_\varepsilon^\infty(x),
\qquad
\forall x\in\mathbb R^n,\ \forall i\in\{1,\ldots,M\}.
\]
Equivalently,
\[
p_\varepsilon(\bar A_i x)
\le
\beta_\varepsilon p_\varepsilon(x),
\qquad
\forall x\in\mathbb R^n,\ \forall i\in\{1,\ldots,M\}.
\]

\item Consequently,
\[
\|\bar A_i\|_{p_\varepsilon}\le \beta_\varepsilon,
\qquad \forall i\in\{1,\ldots,M\},
\]
where $\|\bar A_i\|_{p_\varepsilon}$ is the induced matrix norm generated by \(p_\varepsilon\)
\[
\|\bar A_i\|_{p_\varepsilon}
:=
\sup_{x\neq 0}\frac{p_\varepsilon(\bar A_i x)}{p_\varepsilon(x)}.
\]
Therefore, we have $\rho(\bar A_1,\bar A_2,\ldots,\bar A_M)\le \beta_\varepsilon$.
\end{enumerate}
\end{lemma}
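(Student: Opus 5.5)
The plan is to follow the classical extremal-norm construction for the JSR (Rota--Strang), working item by item and using only the definition of \(\bar\rho\) together with the growth bound it implies. Write \(\mathcal M:=\{1,\ldots,M\}\) and
\[
m_k(x):=\max_{\bar\sigma_k\in\mathcal M^k}\bigl\|\bar A_{\sigma_k}\cdots\bar A_{\sigma_1}x\bigr\|_2^2,
\qquad m_0(x)=\|x\|_2^2 .
\]
The \(k=0\) term is the empty product, i.e.\ the identity.

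\textbf{Items 1 and 2.} For item 1, split off the \(k=0\) term and reindex the rest: every product of length \(k+1\) has the form \((\text{length-}k\text{ product})\cdot\bar A_i\). This gives \(m_{k+1}(x)=\max_i m_k(\bar A_i x)\). Then
\[
V^{t+1}_\varepsilon(x)=\|x\|_2^2+\beta_\varepsilon^{-2}\sum_{k=0}^{t}\beta_\varepsilon^{-2k}\max_i m_k(\bar A_ix)\ \ge\ \|x\|_2^2+\beta_\varepsilon^{-2}\max_i V^t_\varepsilon(\bar A_ix),
\]
since a sum of maxima dominates the maximum of the sum. In fact this holds with equality if we keep the maxima inside each term, which will matter later. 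Item 2 is immediate: each \(m_k\) is \(2\)-homogeneous, and the summands are nonnegative.

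\textbf{Item 3 (main analytic input).} This is the step where \(\varepsilon>0\) is genuinely used. By the definition of the JSR with the spectral norm, pick \(\varepsilon'=\varepsilon/2\). Then there is \(K\) such that for \(k\ge K\),
\[
\max_{\bar\sigma_k}\|\bar A_{\sigma_k}\cdots\bar A_{\sigma_1}\|_2\le(\bar\rho+\varepsilon/2)^k .
\]
For \(k<K\), bound the same quantity by the finite constant \(\max_i\|\bar A_i\|_2^k\). This yields a uniform bound \(\|\text{product}\|_2\le c\,(\bar\rho+\varepsilon/2)^k\) for all \(k\). Hence
\[
V^t_\varepsilon(x)\le c^2\sum_{k\ge0}\Bigl(\tfrac{\bar\rho+\varepsilon/2}{\bar\rho+\varepsilon}\Bigr)^{2k}\|x\|_2^2=:C_\varepsilon\|x\|_2^2,
\]
a convergent geometric series. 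The lower bound \(V^t_\varepsilon(x)\ge\|x\|_2^2\) comes from the \(k=0\) term.

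\textbf{Items 4 and 5.} Item 4 follows from monotone boundedness. For item 5, positivity and homogeneity come from items 2 to 4. For the triangle inequality, observe that \(V^\infty_\varepsilon(x)=\sum_k\beta_\varepsilon^{-2k}m_k(x)\), and that each \(\sqrt{m_k}\) is a seminorm, being a maximum of norms \(x\mapsto\|Gx\|_2\). Thus
\[
p_\varepsilon(x)=\Bigl\|\bigl(\beta_\varepsilon^{-k}\sqrt{m_k(x)}\bigr)_{k\ge0}\Bigr\|_{\ell^2}
\]
is the \(\ell^2\)-norm of a sequence of seminorms. Minkowski's inequality in \(\ell^2\), applied after the termwise triangle inequality, gives subadditivity. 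This is the step I expect to need the most care, because it is where the quadratic form must be converted into a genuine norm.

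\textbf{Items 6 and 7.} For item 6, let \(t\to\infty\) in item 1 with \(x\) replaced appropriately. This gives
\[
V^\infty_\varepsilon(x)\ge\|x\|_2^2+\beta_\varepsilon^{-2}V^\infty_\varepsilon(\bar A_ix)\ge\beta_\varepsilon^{-2}V^\infty_\varepsilon(\bar A_ix),
\]
where the limit is justified because the convergence is monotone in \(t\) for each fixed \(\bar A_ix\). Taking square roots yields the norm inequality. Item 7 then follows by taking the supremum, which gives the induced-norm bound, and by applying \cref{lem:jsr-upper-bound}. This requires noting that the induced operator norm of the norm \(p_\varepsilon\) is submultiplicative.
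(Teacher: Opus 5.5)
Your proposal is correct and follows essentially the same route as the paper's proof. Both use the split-and-reindex argument for item 1, an intermediate rate $\eta\in(\bar\rho,\beta_\varepsilon)$ with a geometric series for item 3 (you take $\eta=\bar\rho+\varepsilon/2$), Minkowski's inequality for the triangle inequality, and the limit $t\to\infty$ in item 1 for each fixed $i$ to obtain item 6. The remaining differences are cosmetic: you apply Minkowski directly to the infinite $\ell^2$ sequence instead of to each finite $t$ followed by a limit, and you cite Lemma~\ref{lem:jsr-upper-bound} for item 7 instead of repeating the submultiplicativity argument.
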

\begin{proof}
We prove the statements one by one.

\medskip
\noindent
\textit{Proof of 1).}
By definition, one has
\[
V_\varepsilon^{t+1}(x)
=
\sum_{k=0}^{t+1}
\beta_\varepsilon^{-2k}
\max_{\bar \sigma_k\in\{1,\ldots,M\}^k}
\left\|
\bar A_{\sigma_k}\cdots \bar A_{\sigma_1}x
\right\|_2^2.
\]
The \(k=0\) term is simply \(\|x\|_2^2\). For \(k\ge 1\), writing \(k=j+1\), we obtain
\begin{align*}
V_\varepsilon^{t+1}(x)
&=
\|x\|_2^2
+
\sum_{j=0}^{t}
\beta_\varepsilon^{-2(j+1)}
\max_{\bar \sigma_{j+1}}
\left\|
\bar A_{\sigma_{j+1}}\cdots \bar A_{\sigma_1}x
\right\|_2^2 \\
&=
\|x\|_2^2
+
\beta_\varepsilon^{-2}
\sum_{j=0}^{t}
\beta_\varepsilon^{-2j}
\max_{i\in\{1,\ldots,M\}}
\max_{\bar \tau_j\in\{1,\ldots,M\}^j}
\left\|
\bar A_{\tau_j}\cdots \bar A_{\tau_1}\bar A_i x
\right\|_2^2 \\
&\ge
\|x\|_2^2
+
\beta_\varepsilon^{-2}
\max_{i\in\{1,\ldots,M\}}
\sum_{j=0}^{t}
\beta_\varepsilon^{-2j}
\max_{\bar \tau_j\in\{1,\ldots,M\}^j}
\left\|
\bar A_{\tau_j}\cdots \bar A_{\tau_1}\bar A_i x
\right\|_2^2 \\
&=
\|x\|_2^2
+
\beta_\varepsilon^{-2}
\max_{i\in\{1,\ldots,M\}}
V_\varepsilon^t(\bar A_i x).
\end{align*}
This proves 1).

\medskip
\noindent
\textit{Proof of 2).}
The homogeneity follows immediately from the homogeneity of the Euclidean norm:
\[
V_\varepsilon^t(\lambda x)
=
\sum_{k=0}^{t}
\beta_\varepsilon^{-2k}
\max_{\bar \sigma_k}
\|\bar A_{\sigma_k}\cdots \bar A_{\sigma_1}(\lambda x)\|_2^2
=
|\lambda|^2 V_\varepsilon^t(x).
\]
The monotonicity $V_\varepsilon^t(x)\le V_\varepsilon^{t+1}(x)$ holds because \(V_\varepsilon^{t+1}\) contains all terms of \(V_\varepsilon^t\) plus one additional nonnegative term.

\medskip
\noindent
\textit{Proof of 3).}
The lower bound is immediate from the \(k=0\) term: $V_\varepsilon^t(x)\ge \|x\|_2^2$.
For the upper bound, since \(\bar \rho<\beta_\varepsilon\), choose any number \(\eta\) such that $
\bar \rho<\eta<\beta_\varepsilon$. By the definition of the JSR, there exists an integer \(K \geq 0\) such that
\[
\max_{\bar \sigma_k\in\{1,\ldots,M\}^k}
\left\|
\bar A_{\sigma_k}\cdots \bar A_{\sigma_1}
\right\|_2^{1/k}
\le \eta,
\qquad \forall k\ge K.
\]
Hence, for all \(k\ge K\), we have
\[
\max_{\bar \sigma_k\in\{1,\ldots,M\}^k}
\left\|
\bar A_{\sigma_k}\cdots \bar A_{\sigma_1}
\right\|_2
\le
\eta^k.
\]
Now define
\[
C_0:=\max\Bigl\{1,\,
\max_{0\le k\le K-1}
\eta^{-k}
\max_{\bar \sigma_k}
\|\bar A_{\sigma_k}\cdots \bar A_{\sigma_1}\|_2
\Bigr\}.
\]
Then, for every \(k\ge 0\),
\[
\max_{\bar \sigma_k}
\left\|
\bar A_{\sigma_k}\cdots \bar A_{\sigma_1}
\right\|_2
\le
C_0\,\eta^k.
\]
Therefore, it follows that
\begin{align*}
V_\varepsilon^t(x)
&=
\sum_{k=0}^{t}
\beta_\varepsilon^{-2k}
\max_{\bar \sigma_k}
\left\|
\bar A_{\sigma_k}\cdots \bar A_{\sigma_1}x
\right\|_2^2 \\
&\le
\sum_{k=0}^{t}
\beta_\varepsilon^{-2k}
\left(
\max_{\bar \sigma_k}
\left\|
\bar A_{\sigma_k}\cdots \bar A_{\sigma_1}
\right\|_2
\right)^2
\|x\|_2^2 \\
&\le
C_0^2
\sum_{k=0}^{t}
\left(\frac{\eta}{\beta_\varepsilon}\right)^{2k}
\|x\|_2^2 \\
&\le
C_0^2
\sum_{k=0}^{\infty}
\left(\frac{\eta}{\beta_\varepsilon}\right)^{2k}
\|x\|_2^2.
\end{align*}
Since \(\eta/\beta_\varepsilon<1\), the geometric series converges. Thus, setting
\[
C_\varepsilon:=
\frac{C_0^2}{1-(\eta/\beta_\varepsilon)^2},
\]
we obtain $V_\varepsilon^t(x)\le C_\varepsilon \|x\|_2^2$ for all $x$ and $t\ge 0$. This proves 3).

\medskip
\noindent
\textit{Proof of 4).}
By 2), the sequence \(V_\varepsilon^t(x)\) is nondecreasing in \(t\), and by 3), it is uniformly bounded above by \(C_\varepsilon\|x\|_2^2\). Hence the limit $V_\varepsilon^\infty(x):=\lim_{t\to\infty}V_\varepsilon^t(x)$ exists and is finite for every \(x\in\mathbb R^n\). Passing to the limit in the bounds of 3) yields $\|x\|_2^2
\le
V_\varepsilon^\infty(x)
\le
C_\varepsilon\|x\|_2^2$.

\medskip
\noindent
\textit{Proof of 5).}
For each fixed \(k\ge 1\), define
\[
\nu_k(x)
:=
\beta_\varepsilon^{-k}
\max_{\bar \sigma_k\in\{1,\ldots,M\}^k}
\|\bar A_{\sigma_k}\cdots \bar A_{\sigma_1}x\|_2.
\]
Moreover, define $\nu_0(x):=\|x\|_2$.
For each \(k\ge 0\), \(\nu_k\) is a seminorm, since it is the pointwise maximum of seminorms. Then
\[
V_\varepsilon^t(x)=\sum_{k=0}^t \nu_k(x)^2,
\qquad
p_\varepsilon^t(x):=\sqrt{V_\varepsilon^t(x)}
=
\left(\sum_{k=0}^t \nu_k(x)^2\right)^{1/2}.
\]
Because \(\nu_0(x)=\|x\|_2\) is a norm, \(p_\varepsilon^t\) is a norm for every \(t\ge 0\). Indeed, positivity and absolute homogeneity are immediate, and the triangle inequality follows from Minkowski's inequality applied to the vector $(\nu_0(x),\nu_1(x),\ldots,\nu_t(x))$. Now, by definition,
\[
p_\varepsilon(x)=\sqrt{V_\varepsilon^\infty(x)}=\lim_{t\to\infty}p_\varepsilon^t(x),
\]
where the limit is monotone increasing. Since each \(p_\varepsilon^t\) is a norm, we have for all \(x,y\in\mathbb R^n\), $p_\varepsilon^t(x+y)\le p_\varepsilon^t(x)+p_\varepsilon^t(y)$. Letting \(t\to\infty\), we obtain
\[
p_\varepsilon(x+y)\le p_\varepsilon(x)+p_\varepsilon(y).
\]
Absolute homogeneity is inherited from \(V_\varepsilon^\infty(\lambda x)=|\lambda|^2V_\varepsilon^\infty(x)\), and positive definiteness follows from
\[
p_\varepsilon(x)^2=V_\varepsilon^\infty(x)\ge \|x\|_2^2.
\]
Hence \(p_\varepsilon\) is a norm.

\medskip
\noindent
\textit{Proof of 6).}
Using 1), we have
\[
V_\varepsilon^{t+1}(x)
\ge \|x\|_2^2
+\beta_\varepsilon^{-2}
\max_i V_\varepsilon^t(\bar A_i x).
\]
Therefore,
\[
\max_i V_\varepsilon^t(\bar A_i x)
\le
\beta_\varepsilon^2\bigl(V_\varepsilon^{t+1}(x)-\|x\|_2^2\bigr)
\le
\beta_\varepsilon^2 V_\varepsilon^{t+1}(x).
\]
Fixing \(i\), we have $V_\varepsilon^t(\bar A_i x)\le \beta_\varepsilon^2 V_\varepsilon^{t+1}(x)$. Letting \(t\to\infty\) and using the monotone convergence of both sides gives
\[
V_\varepsilon^\infty(\bar A_i x)
\le
\beta_\varepsilon^2 V_\varepsilon^\infty(x).
\]
Taking square roots yields $p_\varepsilon(\bar A_i x)\le \beta_\varepsilon p_\varepsilon(x)$.

\medskip
\noindent
\textit{Proof of 7).}
From 6), the induced matrix norm generated by \(p_\varepsilon\) satisfies
\[
\|\bar A_i\|_{p_\varepsilon}
:=
\sup_{x\neq 0}\frac{p_\varepsilon(\bar A_i x)}{p_\varepsilon(x)}
\le
\beta_\varepsilon,
\qquad \forall i\in\{1,\ldots,M\}.
\]
Hence, for any switching sequence \((\sigma_1,\ldots,\sigma_k)\),
\[
\|\bar A_{\sigma_k}\cdots \bar A_{\sigma_1}\|_{p_\varepsilon}
\le
\prod_{j=1}^k \|\bar A_{\sigma_j}\|_{p_\varepsilon}
\le
\beta_\varepsilon^k.
\]
Taking the maximum over all switching sequences, then the \(k\)-th root, and finally the limit as \(k\to\infty\), we obtain $\rho(\bar A_1,\bar A_2,\ldots,\bar A_M)\le \beta_\varepsilon$.
This completes the proof.
\end{proof}

\subsection{Statement and proof of Lemma~\ref{lem:convex-hull-extension}}
\label{app:proof-lem:convex-hull-extension}
\setcounter{lemma}{12}
\begin{lemma}[Convex-hull extension of the Lyapunov function]
\label{lem:convex-hull-extension}
Let $V_\infty^\varepsilon$ be the convex homogeneous Lyapunov function defined in \cref{lem:common-lyapunov-restricted-family},
and fix any $\epsilon>0$ such that
\[
\beta_\epsilon:=\bar\rho+\epsilon \in (0,1).
\]
Then $V_\infty^\varepsilon$ is convex, and for every stochastic policy $\mu$,
\[
V_\infty^\varepsilon(\bar A_\mu x)
\le
\beta_\varepsilon^2 V_\infty^\varepsilon(x),
\qquad \forall x\in\mathbb R^{|{\cal S}||{\cal A}|}.
\]
\end{lemma}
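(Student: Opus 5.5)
The plan is to exploit the fact that \(V_\varepsilon^\infty=p_\varepsilon^2\), where \(p_\varepsilon\) is the norm constructed in \cref{lem:common-lyapunov-restricted-family}, item 5). Convexity then comes for free: \(p_\varepsilon\) is convex (triangle inequality plus homogeneity), nonnegative, and \(t\mapsto t^2\) is convex and nondecreasing on \([0,\infty)\). Hence \(V_\varepsilon^\infty=p_\varepsilon^2\) is convex. Equivalently, for \(\lambda\in[0,1]\) I would write
\[
p_\varepsilon(\lambda x+(1-\lambda)y)^2\le(\lambda p_\varepsilon(x)+(1-\lambda)p_\varepsilon(y))^2\le \lambda p_\varepsilon(x)^2+(1-\lambda)p_\varepsilon(y)^2 .
\]

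For the Lyapunov inequality over stochastic policies, I would use the convex-combination representation recorded in Appendix~\ref{app:strict-jsr-conditions}. Every stochastic-policy matrix satisfies \(B_\mu=\sum_{\pi\in\Theta}c_\pi(\mu)B_\pi\) with \(c_\pi(\mu)\ge0\) and \(\sum_\pi c_\pi(\mu)=1\). Multiplying by \(\gamma\) and conjugating by \(\proj\), which is linear, gives
\[
\bar A_\mu=\sum_{i=1}^M c_i\bar A_i .
\]
This identity is self-contained: \(\Pi^\mu=\sum_\pi c_\pi(\mu)\Pi^\pi\) row by row, because for each \(s\) we have \(\sum_{\pi:\pi(s)=a}\prod_{s'}\mu(\pi(s')|s')=\mu(a|s)\). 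It follows that \(\bar A_\mu x=\sum_i c_i\bar A_i x\).

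Next, the triangle inequality for \(p_\varepsilon\) and item 6) of \cref{lem:common-lyapunov-restricted-family} give
\[
p_\varepsilon(\bar A_\mu x)\le\sum_i c_i\,p_\varepsilon(\bar A_i x)\le\sum_i c_i\,\beta_\varepsilon p_\varepsilon(x)=\beta_\varepsilon p_\varepsilon(x).
\]
Squaring both sides, which is legitimate since both are nonnegative, yields \(V_\varepsilon^\infty(\bar A_\mu x)\le\beta_\varepsilon^2V_\varepsilon^\infty(x)\). Alternatively, one can apply convexity of \(V_\varepsilon^\infty\) directly to the convex combination, which gives the same bound.

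There is no real obstacle here. The only points needing care are verifying the convex-combination identity for \(\Pi^\mu\) (the product weights \(c_\pi(\mu)\)) and noting that convexity must be argued through the norm \(p_\varepsilon\) rather than from the maxima inside \(V_\varepsilon^t\). The latter route also works, since each \(V_\varepsilon^t\) is a sum of squares of seminorms, hence convex, and pointwise limits of convex functions are convex.
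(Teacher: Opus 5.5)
Your proposal is correct and follows essentially the paper's route: write $\bar A_\mu=\sum_{\pi\in\Theta}c_\pi(\mu)\bar A_\pi$, then combine convexity of the Lyapunov function with item 6) of \cref{lem:common-lyapunov-restricted-family}. The only difference is cosmetic. You pass through the triangle inequality for $p_\varepsilon$ and then square, whereas the paper applies Jensen's inequality to $V_\infty^\varepsilon$ directly, taking convexity from the fact that each $V_t^\varepsilon$ is a sum of pointwise maxima of convex quadratics; that route is fully valid, so there is no need to avoid it as your closing remark first suggests.
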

\begin{proof}
For each deterministic policy $\pi\in\Theta$, let
\[
A_\pi := \gamma P\Pi^\pi,
\qquad
\bar A_\pi := \proj A_\pi \proj.
\]
Any stochastic policy $\mu$ can be represented as a convex combination of deterministic
policies:
\[
\Pi^\mu
=
\sum_{\pi\in\Theta} c_\pi(\mu)\Pi^\pi,
\qquad
c_\pi(\mu)\ge 0,
\qquad
\sum_{\pi\in\Theta} c_\pi(\mu)=1,
\]
where
\[
c_\pi(\mu):=\prod_{s\in {\cal S}}\mu(\pi(s)|s).
\]
Consequently, we have
\[
A_\mu = \sum_{\pi\in\Theta} c_\pi(\mu) A_\pi,
\qquad
\bar A_\mu = \sum_{\pi\in\Theta} c_\pi(\mu)\bar A_\pi.
\]
Conversely, any convex combination of deterministic-policy matrices is generated by the stochastic policy whose statewise action probabilities are the corresponding marginals,
\[
\mu_c(a|s):=\sum_{\pi\in\Theta:\,\pi(s)=a}c_\pi.
\]
Thus the stochastic-policy family is exactly the convex hull of the deterministic-policy family; the proof below uses only the inclusion displayed above.
Now, for each finite $t$, the function $V_t^\varepsilon$ is convex because it is a sum of terms
of the form $x \mapsto \max_{\bar\sigma_k}\|\bar A_{\sigma_k}\cdots \bar A_{\sigma_1}x\|_2^2$, which is the pointwise maximum of convex quadratic functions. Since $V_\infty^\varepsilon(x)=\sup_{t\ge 0} V_t^\varepsilon(x)$, it follows that $V_\infty^\varepsilon$ is also convex.
Therefore, using Jensen's inequality leads to
\begin{align*}
V_\infty^\varepsilon(\bar A_\mu x)
&=
V_\infty^\varepsilon\!\left(
\sum_{\pi\in\Theta} c_\pi(\mu)\bar A_\pi x
\right) \\
&\le
\sum_{\pi\in\Theta} c_\pi(\mu)\,
V_\infty^\varepsilon(\bar A_\pi x) \\
&\le
\max_{\pi\in\Theta} V_\infty^\varepsilon(\bar A_\pi x).
\end{align*}
By~\cref{lem:common-lyapunov-restricted-family}, one gets $\max_{\pi\in\Theta} V_\infty^\varepsilon(\bar A_\pi x) \le \beta_\varepsilon^2 V_\infty^\varepsilon(x)$. Combining the two inequalities completes the proof.
\end{proof}

\subsection{Proof of Theorem~\ref{thm:global-exp-conv-X1}}
\label{app:proof-thm:global-exp-conv-X1}
\begin{proof}
By~\cref{lem:projected-error-dynamics}, we have $z_{k+1}=\bar A_{\mu_k} z_k$.
Hence, by~\cref{lem:convex-hull-extension}, one can derive
\[
V_\infty^\varepsilon(z_{k+1})
\le
\beta_\varepsilon^2 V_\infty^\varepsilon(z_k),
\qquad \forall k\ge 0.
\]
Iterating this inequality gives $V_\infty^\varepsilon(z_k)\le \beta_\varepsilon^{2k} V_\infty^\varepsilon(z_0)$. Now define $p_\varepsilon(x):=\sqrt{V_\infty^\varepsilon(x)}$.
By~\cref{lem:common-lyapunov-restricted-family}, $p_\varepsilon$ is also a norm, and there exists $C_\varepsilon>0$ such that
\[
\|x\|_2^2 \le V_\infty^\varepsilon(x)\le C_\varepsilon \|x\|_2^2,
\qquad \forall x\in\mathbb R^{|{\cal S}||{\cal A}|}.
\]
Thus, with $c_\varepsilon:=\sqrt{C_\varepsilon}$, we have
\[
\|x\|_2 \le p_\varepsilon(x)\le c_\varepsilon \|x\|_2,
\qquad \forall x\in\mathbb R^{|{\cal S}||{\cal A}|}.
\]
Therefore, we have
\[
\|z_k\|_2
\le
p_\varepsilon(z_k)
\le
\beta_\varepsilon^k p_\varepsilon(z_0)
\le
c_\varepsilon \beta_\varepsilon^k \|z_0\|_2.
\]
Using \cref{lem:projected-error-dynamics} once again,
\[
\dist_2(Q_k,\mathcal X_1)=\|z_k\|_2,
\qquad
\dist_2(Q_0,\mathcal X_1)=\|z_0\|_2,
\]
which proves the claim.
\end{proof}

\subsection{Statement and proof of Corollary~\ref{cor:fast-identification-pos}}
\label{app:proof-cor:fast-identification-pos}
\setcounter{corollary}{1}
\begin{corollary}[Fast finite-time identification of the POSS]\label{cor:fast-identification-pos}
Let~\cref{assump:optimal-class-separation} hold, fix any $\epsilon>0$ such that
\[
\beta_\epsilon:=\bar\rho+\epsilon \in (0,1),
\]
and let $c_\varepsilon> 0$ be the constant from~\cref{thm:global-exp-conv-X1}. Define
\[
K_{\mathrm{id}}
:=
\begin{cases}
0, & c_\varepsilon\,\dist_2(Q_0,\mathcal X_1)<\dfrac{\bar\Delta}{2},\\[1.2ex]
\left\lfloor
\dfrac{
\log\!\left(\dfrac{2c_\varepsilon\,\dist_2(Q_0,\mathcal X_1)}{\bar\Delta}\right)
}{
-\log\beta_\varepsilon
}
\right\rfloor+1,
& c_\varepsilon\,\dist_2(Q_0,\mathcal X_1)\ge\dfrac{\bar\Delta}{2}.
\end{cases}
\]
Then, we have $Q_k\in \mathcal X^*$ for all $k\ge K_{\mathrm{id}}$.
In particular,
\[
\pi_{Q_k}(s)\in \Phi^*(s),\qquad \forall s\in {\cal S},\ \forall k\ge K_{\mathrm{id}}.
\]
\end{corollary}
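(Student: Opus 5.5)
The plan mirrors the proof of \cref{cor:basic-entrance-Xstar}. The only change is that the infinity-norm contraction bound \cref{eq:qvi-convergence} is replaced by the transverse estimate of \cref{thm:global-exp-conv-X1}, and the resulting bound is fed into the tube inclusion of \cref{prop:invariant-tube}.

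\textbf{Step 1: a strict bound on the transverse distance.} I first show that
\[
c_\varepsilon\beta_\varepsilon^k\dist_2(Q_0,\mathcal X_1)<\bar\Delta/2
\qquad\text{for every } k\ge K_{\mathrm{id}}.
\]
In the first case of the definition, $K_{\mathrm{id}}=0$. Since $\beta_\varepsilon\in(0,1)$, the left-hand side is nonincreasing in $k$, so the case hypothesis already gives the claim. In the second case, $K_{\mathrm{id}}$ is strictly larger than $\log\bigl(2c_\varepsilon\dist_2(Q_0,\mathcal X_1)/\bar\Delta\bigr)/(-\log\beta_\varepsilon)$, because $\lfloor x\rfloor+1>x$. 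Multiplying by $-\log\beta_\varepsilon>0$ and exponentiating gives $\beta_\varepsilon^{K_{\mathrm{id}}}<\bar\Delta/\bigl(2c_\varepsilon\dist_2(Q_0,\mathcal X_1)\bigr)$. Note that $\dist_2(Q_0,\mathcal X_1)>0$ holds here, since $\bar\Delta>0$. Monotonicity in $k$ then extends the bound to all $k\ge K_{\mathrm{id}}$.

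\textbf{Step 2: transfer to the infinity-norm distance.} By \cref{thm:global-exp-conv-X1} together with Step 1,
\[
\dist_\infty(Q_k,\mathcal X_1)\le\dist_2(Q_k,\mathcal X_1)<\bar\Delta/2.
\]
The first inequality holds because $\|\cdot\|_\infty\le\|\cdot\|_2$ pointwise, so the infimum over $\mathcal X_1$ inherits it.

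\textbf{Step 3: membership in the tube and conclusion.} For each such $k$, I choose $\delta_k$ with
\[
\dist_\infty(Q_k,\mathcal X_1)\le\delta_k<\bar\Delta/2,
\]
taking $\delta_k>0$. This gives $Q_k\in{\cal T}_{\delta_k}$. Then \cref{prop:invariant-tube}, which uses \cref{assump:optimal-class-separation}, yields ${\cal T}_{\delta_k}\subset\mathcal X^*$, so $Q_k\in\mathcal X^*$. The final display of the corollary is simply the definition of $\mathcal X^*$.

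The argument is routine. The only delicate point is the strictness bookkeeping: the floor-plus-one must deliver a \emph{strict} inequality, and $\delta_k$ must be chosen strictly below $\bar\Delta/2$ so that \cref{prop:invariant-tube} applies. One could instead use invariance of a single fixed tube after the first entrance time, but the pointwise-in-$k$ argument avoids needing that.
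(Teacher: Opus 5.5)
Your proof is correct and follows essentially the same route as the paper. You bound $\dist_2(Q_k,\mathcal X_1)$ via \cref{thm:global-exp-conv-X1}, use the floor-plus-one choice of $K_{\mathrm{id}}$ to make this strictly below $\bar\Delta/2$, and pass to the infinity norm; the only cosmetic difference is that the paper re-derives the action-gap inequality $\max_{a\in\Phi^*(s)}Q_k(s,a)-Q_k(s,b)\ge\bar\Delta-2\|z_k\|_\infty>0$ directly for the projected error $z_k$, rather than citing the tube inclusion ${\cal T}_{\delta}\subset\mathcal X^*$ from \cref{prop:invariant-tube}, which rests on the same estimate.
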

\begin{proof}
Write
\[
e_k=\alpha_k \mathbf{1}+z_k,\qquad z_k=\proj e_k.
\]
Since $\alpha_k \mathbf{1}$ does not change the action ordering, only $z_k$ matters for policy identification.
If $\|z_k\|_\infty<\frac{\bar\Delta}{2}$, then for every state $s\in {\cal S}_{\mathrm{sep}}$ and every action $b\notin\Phi^*(s)$,
\begin{align*}
\max_{a\in\Phi^*(s)} Q_k(s,a)-Q_k(s,b)
&=
V^*(s)-Q^*(s,b)
+
\max_{a\in\Phi^*(s)} e_k(s,a)-e_k(s,b)\\
&\ge
\bar\Delta_s
+
\max_{a\in\Phi^*(s)} z_k(s,a)-z_k(s,b)\\
&\ge
\bar\Delta_s-2\|z_k\|_\infty .
\end{align*}
Hence, it follows that
\[
\max_{a\in\Phi^*(s)}Q_k(s,a)-Q_k(s,b)
\ge
\bar\Delta_s-2\|z_k\|_\infty
\ge
\bar\Delta-2\|z_k\|_\infty
>0.
\]
Consequently, no non-optimal action can be tie-broken greedy, and therefore, we have $\pi_{Q_k}(s)\in\Phi^*(s)$ for all $s\in {\cal S}$.
By~\cref{eq:global-exp-conv-X1}, one gets
\[
\|z_k\|_2
=
\dist_2(Q_k,\mathcal X_1)
\le
c_\varepsilon\beta_\varepsilon^k\dist_2(Q_0,\mathcal X_1).
\]
The definition of $K_{\mathrm{id}}$ guarantees
\[
c_\varepsilon\beta_\varepsilon^k\dist_2(Q_0,\mathcal X_1)<\frac{\bar\Delta}{2},
\qquad \forall k\ge K_{\mathrm{id}}.
\]
Since $\|z_k\|_\infty\le \|z_k\|_2$, it follows that $Q_k\in \mathcal X^*$ for all $k\ge K_{\mathrm{id}}$.
\end{proof}

\subsection{Statement and proof of Theorem~\ref{thm:two-stage-convergence}}
\label{app:proof-thm:two-stage-convergence}
\setcounter{theorem}{1}
\begin{theorem}[Two-stage convergence]\label{thm:two-stage-convergence}
Let \cref{assump:optimal-class-separation} hold, and let $K_{\mathrm{id}}$ be defined as in \cref{cor:fast-identification-pos}. Define
\[
\bar{\cal H}_* := \{\bar A_\pi : \pi \in \Theta^*\},
\qquad
\bar\rho_* := \rho(\bar{\cal H}_*),
\]
where $\bar A_\pi := \proj(\gamma P\Pi^\pi)\proj$ for any $\pi \in \Theta^*$. Then, $\bar\rho_* \le \bar\rho \le \gamma$ and the following statements hold.
\begin{enumerate}
    \item For any $\varepsilon > 0$ such that $\beta_\varepsilon := \bar\rho+\varepsilon < 1$, there exists a constant $c_\varepsilon > 0$ such that
    \[
    \dist_2(Q_k,\mathcal X_1)
    \le
    c_\varepsilon \beta_\varepsilon^k \, \dist_2(Q_0,\mathcal X_1),
    \qquad \forall k \ge 0.
    \]
    In particular, the convergence toward the affine set \(\mathcal X_1\), and hence toward the POSS \(\mathcal X^*\), admits exponential upper bounds at any rate larger than the JSR $\bar\rho$ of the full restricted switching family, with the stochastic-policy convex hull handled by the Lyapunov inequality.

    \item For all $k \ge K_{\mathrm{id}}$, there exists a policy $\pi_k \in \Theta^*$ such that $Q_{k+1}-Q^* = A_{\pi_k}(Q_k-Q^*)$.

    \item For any $\varepsilon_* > 0$ such that $\beta_* := \bar\rho_*+\varepsilon_* < 1$, there exists a constant $\tilde C_{\varepsilon_*} > 0$ such that
    \[
    \|z_{K_{\mathrm{id}}+\ell}\|_2
    \le
    \tilde C_{\varepsilon_*}\beta_*^\ell
    \|z_{K_{\mathrm{id}}}\|_2,
    \qquad \forall \ell \ge 0.
    \]
    Thus, after finite-time identification of \(\mathcal X^*\), the transverse component admits exponential upper bounds at any rate larger than the JSR $\bar\rho_*$ of the restricted optimal family.

    \item If, in addition, $\bar\rho_* < \gamma$, then one may choose $\varepsilon_*>0$ sufficiently small so that $\beta_*=\bar\rho_*+\varepsilon_*<\gamma$. In this case, there exists a constant $D_{\varepsilon_*}>0$ such that
    \[
    \|Q_{K_{\mathrm{id}}+\ell}-Q^*\|_2
    \le
    D_{\varepsilon_*}\gamma^\ell
    \|Q_{K_{\mathrm{id}}}-Q^*\|_2,
    \qquad \forall \ell \ge 0.
    \]
\end{enumerate}
\end{theorem}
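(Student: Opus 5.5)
The proof has four parts, preceded by a one-line preliminary. For \(\bar\rho_*\le\bar\rho\), note that \(\bar{\cal H}_*\subseteq\bar{\cal H}\): each product over optimal policies is also a product over all policies, so the maximum in \cref{def:joint-spectral-radius} can only grow. Combined with \cref{lem:restricted-jsr-upper-gamma}, this gives \(\bar\rho_*\le\bar\rho\le\gamma\). Item~1 is then a restatement of \cref{thm:global-exp-conv-X1}.

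For item~2, fix \(k\ge K_{\mathrm{id}}\). By \cref{cor:fast-identification-pos}, \(Q_k\in\mathcal X^*\), so \(\pi_k:=\pi_{Q_k}\in\Theta^*\). Optimality of \(\pi_k\) gives \(Q^*=R+\gamma P\Pi^{\pi_k}Q^*\), exactly as in the proof of \cref{prop:X1-invariant}. The Q-VI update is \(Q_{k+1}=R+\gamma P\Pi^{\pi_k}Q_k\). Subtracting the two identities yields \(e_{k+1}=A_{\pi_k}e_k\). The key fact is that the greedy policy of \(Q_k\) is itself an optimal policy, so the affine term \(b_{Q_k}\) of \cref{eq:qvi-switched-affine-error} vanishes.

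For item~3, I will rebuild the Lyapunov norm of \cref{lem:common-lyapunov-restricted-family} using the smaller family \(\bar{\cal H}_*\) and rate \(\beta_*\). This produces a norm \(p_*\) with \(\|x\|_2\le p_*(x)\le \tilde C_{\varepsilon_*}\|x\|_2\) and \(p_*(\bar A_\pi x)\le\beta_* p_*(x)\) for all \(\pi\in\Theta^*\); the proof of that lemma uses nothing specific to the full family. By item~2 and \cref{lem:projection-product-identity} (whose proof uses only \(A_\pi\mathbf 1=\gamma\mathbf 1\)), we get \(z_{k+1}=\proj A_{\pi_k}e_k=\bar A_{\pi_k}z_k\) for \(k\ge K_{\mathrm{id}}\). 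Here the switching is deterministic among optimal policies, so the convex-hull step is not needed. Iterating the contraction from time \(K_{\mathrm{id}}\) and sandwiching with the norm equivalence gives the bound, with the constant equal to \(\sqrt{C}\) for the analogue \(C\) of \(C_\varepsilon\).

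For item~4, decompose \(e_k=\alpha_k\mathbf 1+z_k\). From item~2 and \(A_\pi\mathbf 1=\gamma\mathbf 1\),
\[
e_{k+1}=\gamma\alpha_k\mathbf 1+A_{\pi_k}z_k .
\]
Taking the \(\mathbf 1\)-component gives
\[
\alpha_{k+1}=\gamma\alpha_k+\tfrac1n\mathbf 1^\top A_{\pi_k}z_k ,
\]
where \(|\tfrac1n\mathbf 1^\top A_{\pi_k}z_k|\le \gamma\|z_k\|_2/\sqrt n\cdot\sqrt n\)-type bounds, i.e.\ at most \(c\|z_k\|_2\) for a constant \(c\). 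Unrolling from \(K_{\mathrm{id}}\) with item~3 gives
\[
|\alpha_{K_{\mathrm{id}}+\ell}|\le\gamma^\ell|\alpha_{K_{\mathrm{id}}}|+c\tilde C_{\varepsilon_*}\|z_{K_{\mathrm{id}}}\|_2\sum_{j<\ell}\gamma^{\ell-1-j}\beta_*^j .
\]
Since \(\beta_*<\gamma\), the sum is at most \(\gamma^{\ell-1}/(1-\beta_*/\gamma)\). The main obstacle is this step: the convolution sum is controlled only because \(\beta_*<\gamma\), and otherwise a factor \(\ell\) appears. Finally,
\[
\|e\|_2^2=n\alpha^2+\|z\|_2^2 ,\qquad |\alpha_{K_{\mathrm{id}}}|,\ \|z_{K_{\mathrm{id}}}\|_2\le\|e_{K_{\mathrm{id}}}\|_2,
\]
and \(\beta_*^\ell\le\gamma^\ell\). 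Combining these collects everything into \(D_{\varepsilon_*}\gamma^\ell\|e_{K_{\mathrm{id}}}\|_2\).
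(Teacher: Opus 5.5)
Your proposal is correct and follows the paper's proof essentially step for step. The ordering $\bar\rho_*\le\bar\rho\le\gamma$ comes from JSR monotonicity plus \cref{lem:restricted-jsr-upper-gamma}, item~1 is \cref{thm:global-exp-conv-X1}, item~2 uses cancellation of the affine term because the greedy policy is optimal, item~3 rebuilds the Lyapunov norm of \cref{lem:common-lyapunov-restricted-family} on $\bar{\cal H}_*$, and item~4 uses the $\alpha_k$/$z_k$ decomposition with the convolution sum bounded by $\gamma^\ell/(\gamma-\beta_*)$. Your loosely stated constant $c$ is fine and plays the role of the paper's $L_*$; in fact $c=\gamma$ works, since $A_\pi$ is nonnegative with row sums $\gamma$, so $|\tfrac1n\mathbf 1^\top A_\pi z|\le\gamma\|z\|_\infty\le\gamma\|z\|_2$.
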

\begin{proof}
Write
\[
e_k := Q_k-Q^*,
\qquad
\alpha_k := \frac{1}{n}\mathbf{1}^\top e_k,
\qquad
z_k := \proj e_k,
\qquad
n := |{\cal S}||{\cal A}|.
\]
The global convergence estimate follows directly from \cref{thm:global-exp-conv-X1} applied to the full restricted switching family
$\bar A=\{\bar A_1,\ldots,\bar A_M\}$. Since $\mathcal X_1 \subset \mathcal X^*$ and \cref{cor:fast-identification-pos} gives finite-time entrance into \(\mathcal X^*\), this proves the first claim and shows that the convergence toward the POSS is controlled by any exponential rate larger than $\bar\rho$.
Next, by \cref{cor:fast-identification-pos}, for all $k\ge K_{\mathrm{id}}$, $Q_k \in \mathcal X^*$. Hence, for each such $k$, the tie-broken greedy policy $\pi_k := \pi_{Q_k}$ belongs to $\Theta^*$. Therefore, $Q_{k+1}=R+\gamma P\Pi^{\pi_k}Q_k$.
Since $\pi_k$ is optimal, it also satisfies $R+\gamma P\Pi^{\pi_k}Q^*=Q^*$.
Subtracting the two equations yields $Q_{k+1}-Q^*=A_{\pi_k}(Q_k-Q^*)$, where $A_{\pi_k}:=\gamma P\Pi^{\pi_k}$.
This proves the second claim.
Now define $e_k:=Q_k-Q^*$ and $z_k:=\proj e_k$. Then, for all $k\ge K_{\mathrm{id}}$,
\[
z_{k+1}
=
\proj e_{k+1}
=
\proj A_{\pi_k}e_k.
\]
Since
\[
e_k = \proj e_k + (I-\proj)e_k
\]
and $(I-\proj)e_k \in \mathrm{span}(\mathbf{1})$, there exists a scalar $c_k\in\mathbb{R}$ such that
\[
(I-\proj)e_k = c_k\mathbf{1}.
\]
Using $A_{\pi_k}\mathbf{1}=\gamma\mathbf{1}$ and $\proj\mathbf{1}=0$,
we obtain
\[
\proj A_{\pi_k}(I-\proj)e_k
=
c_k\proj A_{\pi_k}\mathbf{1}
=
c_k\gamma \proj\mathbf{1}
=
0.
\]
Therefore,
\[
z_{k+1}
=
\proj A_{\pi_k}\proj e_k
=
\bar A_{\pi_k}z_k,
\]
where $\bar A_{\pi_k}:=\proj A_{\pi_k}\proj$. Thus, after time \(K_{\mathrm{id}}\), the projected error evolves according to the switching family \(\bar{\cal H}_*=\{\bar A_\pi:\pi\in\Theta^*\}\).
Since \(\bar{\cal H}_*\subset\bar{\cal H}\), monotonicity of the JSR yields \(\bar\rho_* \le \bar\rho\).
On the other hand, \cref{lem:restricted-jsr-upper-gamma} gives \(\bar\rho \le \gamma\). Hence, \(\bar\rho_* \le \bar\rho \le \gamma\).
Fix any \(\varepsilon_*>0\) such that \(\beta_*:=\bar\rho_*+\varepsilon_*<1\).
Applying \cref{lem:common-lyapunov-restricted-family} to the restricted optimal family \(\bar{\cal H}_*\), there exists a common Lyapunov function for this family. Hence there exists a constant \(\tilde C_{\varepsilon_*}>0\) such that
\[
\|z_{K_{\mathrm{id}}+\ell}\|_2
\le
\tilde C_{\varepsilon_*}\beta_*^\ell
\|z_{K_{\mathrm{id}}}\|_2,
\qquad \forall \ell\ge 0.
\]
This proves the third claim.

It remains to control the component along $\mathbf{1}$. Write
\[
e_k=\alpha_k\mathbf{1}+z_k,
\qquad
\alpha_k:=\frac{1}{n}\mathbf{1}^\top e_k.
\]
Then
\[
\alpha_{k+1}
=
\frac{1}{n}\mathbf{1}^\top e_{k+1}
=
\frac{1}{n}\mathbf{1}^\top A_{\pi_k}e_k
=
\frac{1}{n}\mathbf{1}^\top A_{\pi_k}(\alpha_k\mathbf{1}+z_k).
\]
Since $A_{\pi_k}\mathbf{1}=\gamma\mathbf{1}$, we get
\begin{align*}
\alpha_{k+1}=\gamma\alpha_k+\eta_k,
\end{align*}
where $\eta_k:=\frac{1}{n}\mathbf{1}^\top A_{\pi_k}z_k$.
Define $L_*:=\max_{\pi\in\Theta^*}\frac{1}{n}\|\mathbf{1}^\top A_\pi\|_2$.
Then, $|\eta_k|\le L_*\|z_k\|_2$ so that
\begin{align*}
|{\alpha _{k + 1}}| \le \gamma |{\alpha _k}| + |{\eta _k}| \le \gamma |{\alpha _k}| + {L_*}{\left\| {{z_k}} \right\|_2}.
\end{align*}
Assume now that $\bar\rho_*<\gamma$.
Then we may choose $\varepsilon_*>0$ sufficiently small so that
\[
\beta_*=\bar\rho_*+\varepsilon_*<\gamma.
\]
Iterating the recursion for $\alpha_k$ from $K_{\mathrm{id}}$, we obtain
\[
|\alpha_{K_{\mathrm{id}}+\ell}|
\le
\gamma^\ell |\alpha_{K_{\mathrm{id}}}|
+
L_*\sum_{j=0}^{\ell-1}\gamma^{\ell-1-j}\|z_{K_{\mathrm{id}}+j}\|_2.
\]
Using the bound on $z_k$ gives
\[
|\alpha_{K_{\mathrm{id}}+\ell}|
\le
\gamma^\ell |\alpha_{K_{\mathrm{id}}}|
+
L_*\tilde C_{\varepsilon_*}
\sum_{j=0}^{\ell-1}\gamma^{\ell-1-j}\beta_*^j
\|z_{K_{\mathrm{id}}}\|_2.
\]
Since $\beta_*<\gamma$,
\[
\sum_{j=0}^{\ell-1}\gamma^{\ell-1-j}\beta_*^j
=
\frac{\gamma^\ell-\beta_*^\ell}{\gamma-\beta_*}
\le
\frac{\gamma^\ell}{\gamma-\beta_*}.
\]
Hence,
\[
|\alpha_{K_{\mathrm{id}}+\ell}|
\le
\gamma^\ell |\alpha_{K_{\mathrm{id}}}|
+
\frac{L_*\tilde C_{\varepsilon_*}}{\gamma-\beta_*}
\gamma^\ell \|z_{K_{\mathrm{id}}}\|_2.
\]
Finally, since $e_{K_{\mathrm{id}}+\ell}=\alpha_{K_{\mathrm{id}}+\ell}\mathbf 1+z_{K_{\mathrm{id}}+\ell}$, the triangle inequality yields
\begin{align*}
\|e_{K_{\mathrm{id}}+\ell}\|_2
&\le
\|\alpha_{K_{\mathrm{id}}+\ell}\mathbf 1\|_2+\|z_{K_{\mathrm{id}}+\ell}\|_2 \\
&=
\sqrt{n}\,|\alpha_{K_{\mathrm{id}}+\ell}|+\|z_{K_{\mathrm{id}}+\ell}\|_2 \\
&\le
\sqrt{n}\,\gamma^\ell
\left(
|\alpha_{K_{\mathrm{id}}}|
+
\frac{L_*\widetilde C_{\varepsilon_*}}{\gamma-\beta_*}\|z_{K_{\mathrm{id}}}\|_2
\right)
+
\widetilde C_{\varepsilon_*}\beta_*^\ell \|z_{K_{\mathrm{id}}}\|_2 \\
&\le
\gamma^\ell
\left[
\sqrt{n}\,|\alpha_{K_{\mathrm{id}}}|
+
\left(
\frac{\sqrt{n}\,L_*\widetilde C_{\varepsilon_*}}{\gamma-\beta_*}
+
\widetilde C_{\varepsilon_*}
\right)\|z_{K_{\mathrm{id}}}\|_2
\right],
\end{align*}
where in the last step we used \(\beta_*^\ell\le \gamma^\ell\).
Finally, we bound \(|\alpha_{K_{\mathrm{id}}}|\) and \(\|z_{K_{\mathrm{id}}}\|_2\) by \(\|e_{K_{\mathrm{id}}}\|_2\).
By the definition of \(\alpha_{K_{\mathrm{id}}}\),
\[
\sqrt{n}\,|\alpha_{K_{\mathrm{id}}}|
=
\sqrt{n}\left|\frac{1}{n}\mathbf 1^\top e_{K_{\mathrm{id}}}\right|
=
\frac{1}{\sqrt{n}}|\mathbf 1^\top e_{K_{\mathrm{id}}}|
\le
\frac{1}{\sqrt{n}}\|\mathbf 1\|_2\,\|e_{K_{\mathrm{id}}}\|_2
=
\|e_{K_{\mathrm{id}}}\|_2.
\]
Moreover, since \(z_{K_{\mathrm{id}}}=\proj e_{K_{\mathrm{id}}}\) and \(\proj\) is an orthogonal projection, $\|z_{K_{\mathrm{id}}}\|_2\le \|e_{K_{\mathrm{id}}}\|_2$. Substituting these two estimates into the previous bound, we conclude that
\[
\|e_{{K_{\mathrm{id}}}+\ell}\|_2
\le
D_{\varepsilon_*}\gamma^\ell \|e_{K_{\mathrm{id}}}\|_2,
\qquad \forall \ell\ge 0,
\]
where one may take, for instance,
\[
D_{\varepsilon_*}
:=
1+\widetilde C_{\varepsilon_*}
+\frac{\sqrt{n}\,L_*\widetilde C_{\varepsilon_*}}{\gamma-\beta_*}.
\]
Recalling that \(e_k=Q_k-Q^*\), we finally obtain
\[
\|Q_{K_{\mathrm{id}}+\ell}-Q^*\|_2
=
\|e_{K_{\mathrm{id}}+\ell}\|_2
\le
D_{\varepsilon_*}\gamma^\ell
\|Q_{K_{\mathrm{id}}}-Q^*\|_2,
\qquad \forall \ell\ge 0.
\]
This completes the proof.
\end{proof}

\subsection{Statement and proof of Corollary~\ref{cor:unique-optimal-policy}}
\label{app:proof-cor:unique-optimal-policy}
\setcounter{corollary}{2}
\begin{corollary}[Unique optimal policy case]\label{cor:unique-optimal-policy}
Suppose that the optimal policy is unique, namely, $\Theta^*=\{\pi^*\}$.
Under the assumptions of \cref{thm:two-stage-convergence}, we then have
\[
\pi_{Q_k}=\pi^*, \qquad \forall k\ge K_{\mathrm{id}}.
\]
Consequently, for all $k\ge K_{\mathrm{id}}$, $Q_{k+1}-Q^*=A_{\pi^*}(Q_k-Q^*)$, and $z_{k+1}=\bar A_{\pi^*}z_k$, where $\bar A_{\pi^*}:=\proj(\gamma P\Pi^{\pi^*})\proj$.
Moreover,
\[
\rho(\bar A_{\pi^*})=\gamma |\lambda_2(P\Pi^{\pi^*})|.
\]
Hence, in the unique-optimal-policy case, the post-identification transverse dynamics are governed by a single linear system rather than a switching family.
\end{corollary}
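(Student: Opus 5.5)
The plan is to obtain the corollary directly from the second claim of \cref{thm:two-stage-convergence} together with \cref{lem:restricted-spectral-radius}. The argument has three steps.

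First, we identify the greedy policy after $K_{\mathrm{id}}$. By \cref{cor:fast-identification-pos}, $Q_k\in\mathcal X^*$ for all $k\ge K_{\mathrm{id}}$. This means $\pi_{Q_k}(s)\in\Phi^*(s)$ for every $s$, so $\pi_{Q_k}\in\Theta^*$. Since $\Theta^*=\{\pi^*\}$, we get $\pi_{Q_k}=\pi^*$. Uniqueness of the optimal deterministic policy is exactly the statement that $\Phi^*(s)$ is a singleton for every $s$. This uses the characterization of $\Theta^*$ in the preliminaries.

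Second, we write the error dynamics. The second claim of \cref{thm:two-stage-convergence} supplies some $\pi_k\in\Theta^*$ with $Q_{k+1}-Q^*=A_{\pi_k}(Q_k-Q^*)$, and necessarily $\pi_k=\pi^*$. Concretely, $Q_{k+1}=R+\gamma P\Pi^{\pi^*}Q_k$ and $Q^*=R+\gamma P\Pi^{\pi^*}Q^*$; subtracting gives $e_{k+1}=A_{\pi^*}e_k$. Applying $\proj$ and using the same splitting $e_k=\alpha_k\mathbf 1+z_k$ as in the proof of \cref{thm:two-stage-convergence} then yields $z_{k+1}=\bar A_{\pi^*}z_k$. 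The key facts are $A_{\pi^*}\mathbf 1=\gamma\mathbf 1$ and $\proj\mathbf 1=0$; equivalently, one can invoke \cref{lem:projection-product-identity} with $k=1$.

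Third, we compute the spectral radius. This is \cref{lem:restricted-spectral-radius} applied to the index $i$ with $\pi_i=\pi^*$. There $B_i=P\Pi^{\pi^*}$, so $\rho(\bar A_{\pi^*})=\gamma|\lambda_{2}(P\Pi^{\pi^*})|$. Here $\lambda_2$ is understood in the ordering of that lemma: the Perron eigenvalue is removed once, and any repeated copy of $1$ is retained. The closing sentence follows because the restricted optimal family $\bar{\mathcal H}_*$ is now the singleton $\{\bar A_{\pi^*}\}$. Its JSR is therefore the ordinary spectral radius, since the JSR of a single matrix reduces to Gelfand's formula.

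No step is a real obstacle; the only point needing care is the first. Uniqueness of the optimal policy is stated over deterministic policies, and we must make sure it forces $\Phi^*(s)$ to be a singleton. This holds because $\Theta^*$ is the product $\prod_s\Phi^*(s)$.
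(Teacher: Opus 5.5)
Your proposal is correct and follows essentially the same route as the paper: \cref{cor:fast-identification-pos} gives $Q_k\in\mathcal X^*$ for $k\ge K_{\mathrm{id}}$, which forces $\pi_{Q_k}=\pi^*$ since $\Theta^*=\{\pi^*\}$ (equivalently, each $\Phi^*(s)$ is a singleton). The paper then reads off the dynamics from \cref{thm:two-stage-convergence} and the spectral identity from \cref{lem:restricted-spectral-radius}; your explicit derivation of $z_{k+1}=\bar A_{\pi^*}z_k$ and the Gelfand-formula remark for the singleton family only spell out details the paper leaves implicit.
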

\begin{proof}
Since the optimal policy is unique, for each state $s\in {\cal S}$ the set $\Phi^*(s)$ is a singleton, namely, $\Phi^*(s)=\{\pi^*(s)\}$. By \cref{cor:fast-identification-pos}, we have $Q_k\in \mathcal X^*$ for all $k\ge K_{\mathrm{id}}$. Therefore, $\pi_{Q_k}(s)\in \Phi^*(s)=\{\pi^*(s)\}$ for all $s\in {\cal S}$, which implies
$\pi_{Q_k}=\pi^*$ for all $k\ge K_{\mathrm{id}}$.
The stated dynamics then follow immediately from \cref{thm:two-stage-convergence}, and the identity $\rho(\bar A_{\pi^*})=\gamma |\lambda_2(P\Pi_{\pi^*})|$ follows from \cref{lem:restricted-spectral-radius}.
\end{proof}

\subsection{Proof of Lemma~\ref{lem:dobrushin-Diameter-properties}}
\label{app:proof-lem:dobrushin-Diameter-properties}
\begin{proof}
The bounds \(0\le \tau_{\rm D}(B)\le 1\), the equivalence between
\(\tau_{\rm D}(B)<1\) and the scrambling property, the submultiplicativity
\(\tau_{\rm D}(BC)\le \tau_{\rm D}(B)\tau_{\rm D}(C)\), and the exact
diameter-seminorm characterization
\[
\tau_{\rm D}(B)
=
\sup_{v\notin\operatorname{span}(\mathbf 1)}
\frac{\|Bv\|_{\rm dm}}{\|v\|_{\rm dm}}
\]
are standard properties of coefficients of ergodicity for finite stochastic
matrices~\cite{dobrushin1956central,hajnal1958weak,seneta2006non}. The convexity
of \(\tau_{\rm D}\) follows directly from its definition as a maximum of
total-variation distances between rows. Finally, the diameter seminorm is invariant
under additive shifts by constants, which gives
\(\|v+c\mathbf 1\|_{\rm dm}=\|v\|_{\rm dm}\). Since \(B\) is row-stochastic,
\(B\mathbf 1=\mathbf 1\). Therefore the projection only removes or adds
components in \(\operatorname{span}(\mathbf 1)\), and hence
\[
\|\proj B\proj v\|_{\rm dm}
=
\|B\proj v\|_{\rm dm}.
\]
Taking the supremum over nonconstant \(v\) gives the last equality. This
completes the proof.
\end{proof}

\subsection{Proof of Proposition~\ref{prop:uniform-scrambling-strict}}
\label{app:proof-prop:uniform-scrambling-strict}
\begin{proof}
Let
\[
\bar{\mathcal B}:=\{\bar B_\pi:\pi\in\Theta\},
\qquad
\bar B_\pi:=\proj B_\pi\proj .
\]
Since \(\bar A_\pi=\gamma\bar B_\pi\), we have
\[
\bar\rho=\gamma\rho(\bar{\mathcal B}).
\]

The only subtle point is that \(\|\cdot\|_{\rm dm}\) is not a norm on the full space \(\mathbb R^{|\mathcal Y|}\), but it becomes a genuine norm once the constant direction is removed. Let
\[
W:=\operatorname{span}(\mathbf 1)^\perp,
\qquad
\|w\|_{\rm dm}:=\max_{x\in \mathcal Y}w_x-\min_{x\in \mathcal Y}w_x,
\quad w\in W.
\]
On \(W\), \(\|\cdot\|_{\rm dm}\) is a norm. For each \(\pi\in\Theta\), define
\[
T_\pi:=\bar B_\pi|_W:W\to W.
\]
We identify \(\bar B_\pi\) with its restriction \(T_\pi\) on \(W\). Since \(\bar B_\pi\mathbf 1=0\) and \(\bar B_\pi(\mathbb R^{|\mathcal Y|})\subset W\), each \(\bar B_\pi\) has the block form
\[
\bar B_\pi \sim
\begin{bmatrix}
0 & 0\\
0 & T_\pi
\end{bmatrix}
\qquad
\text{with respect to }
\mathbb R^{|\mathcal Y|}=\operatorname{span}(\mathbf 1)\oplus W.
\]
Thus, the JSR of the full matrices \(\bar B_\pi\) equals the JSR of the restricted operators \(T_\pi\) on \(W\).

Now fix \(\omega=(\pi_0,\ldots,\pi_{k-1})\in\Theta^k\). By the same projection-product identity as in~\cref{lem:projection-product-identity},
\[
T_{\pi_{k-1}}\cdots T_{\pi_0}
=
(\proj B_{\pi_{k-1}}\cdots B_{\pi_0}\proj)\big|_W
=
(\proj B_\omega\proj)\big|_W .
\]
By~\cref{lem:dobrushin-Diameter-properties}(5)--(6), the induced operator norm of this map on \((W,\|\cdot\|_{\rm dm})\) is exactly \(\tau_{\rm D}(B_\omega)\). Therefore,
\[
\|T_{\pi_{k-1}}\cdots T_{\pi_0}\|_{{\rm dm}\to{\rm dm}}
=
\tau_{\rm D}(B_\omega),
\]
and hence
\[
\rho(\bar{\mathcal B})
=
\lim_{k\to\infty}
\max_{\omega\in\Theta^k}
\tau_{\rm D}(B_\omega)^{1/k}.
\]

We first prove the equivalence between items 1) and 3). Suppose that \(\bar\rho<\gamma\). Then
\[
\rho(\bar{\mathcal B})<1.
\]
Choose \(\beta\) such that \(\rho(\bar{\mathcal B})<\beta<1\). By the definition of the JSR on the normed space \((W,\|\cdot\|_{\rm dm})\), there exists an integer \(L\ge 1\) such that
\[
\max_{\omega\in\Theta^L}
\|T_\omega\|_{{\rm dm}\to{\rm dm}}^{1/L}
\le
\beta .
\]
Using \(\|T_\omega\|_{{\rm dm}\to{\rm dm}}=\tau_{\rm D}(B_\omega)\), we obtain
\[
\max_{\omega\in\Theta^L}
\tau_{\rm D}(B_\omega)
\le
\beta^L
<1,
\]
which proves item 3).

Conversely, suppose that item 3) holds for some \(L\ge 1\), and define
\[
\theta_L:=\max_{\omega\in\Theta^L}\tau_{\rm D}(B_\omega)<1.
\]
Then every length-\(L\) product of the family \(\{T_\pi:\pi\in\Theta\}\) has operator norm at most \(\theta_L\). By submultiplicativity, every length-\(mL\) product has operator norm at most \(\theta_L^m\). Hence
\[
\rho(\bar{\mathcal B})
\le
\theta_L^{1/L}
<
1.
\]
Multiplying by \(\gamma\), we obtain \(\bar\rho<\gamma\). This proves item 1).

The equivalence between items 2) and 3) follows from~\cref{lem:dobrushin-Diameter-properties}: for each row-stochastic matrix \(B\), \(\tau_{\rm D}(B)<1\) if and only if \(B\) is scrambling. Since \(\Theta^L\) is finite, all length-\(L\) deterministic products are scrambling if and only if
\[
\max_{\omega\in\Theta^L}\tau_{\rm D}(B_\omega)<1.
\]

We next relate the deterministic and stochastic formulations. For stochastic policies \(\mu_0,\ldots,\mu_{L-1}\), each \(B_{\mu_j}\) is a convex combination of deterministic-policy matrices:
\[
B_{\mu_j}
=
\sum_{\pi_j\in\Theta}c_{\pi_j}(\mu_j)B_{\pi_j}.
\]
Therefore,
\[
B_{\mu_{L-1}}\cdots B_{\mu_0}
=
\sum_{\omega\in\Theta^L}
C_\omega B_\omega,
\]
where \(C_\omega\ge 0\) and \(\sum_{\omega\in\Theta^L}C_\omega=1\).
By the convexity of \(\tau_{\rm D}\),
\[
\tau_{\rm D}\!\left(
B_{\mu_{L-1}}\cdots B_{\mu_0}
\right)
\le
\sum_{\omega\in\Theta^L}C_\omega\tau_{\rm D}(B_\omega)
\le
\max_{\omega\in\Theta^L}\tau_{\rm D}(B_\omega).
\]
Thus item 3) implies item 4). The reverse implication is immediate because deterministic policies are special cases of stochastic policies. Hence items 3) and 4) are equivalent.

Finally, if the equivalent conditions hold, then by item 2) every \(B_\omega\), \(\omega\in\Theta^L\), is scrambling. Since the family \(\{B_\omega:\omega\in\Theta^L\}\) is finite, the quantity
\[
\eta_L:=\min_{\omega\in\Theta^L}\min_{x,y\in \mathcal Y}
\sum_{\upsilon\in \mathcal Y}\min\{(B_\omega)_{x\upsilon},(B_\omega)_{y\upsilon}\}
\]
is strictly positive. By~\cref{def:dobrushin-coefficient},
\[
\tau_{\rm D}(B_\omega)\le 1-\eta_L,
\qquad
\forall \omega\in\Theta^L.
\]
Hence every length-\(L\) product has operator norm at most \(1-\eta_L\), and
\[
\rho(\bar{\mathcal B})
\le
(1-\eta_L)^{1/L}.
\]
Therefore,
\[
\bar\rho
=
\gamma\rho(\bar{\mathcal B})
\le
\gamma(1-\eta_L)^{1/L}
<
\gamma.
\]
This completes the proof.
\end{proof}

\end{document}